\newtheorem{teor}{Theorem}[section]
\newtheorem*{teorA}{Theorem A}
\newtheorem*{teorB}{Theorem B}
\newtheorem{defi}{Definition}
\newtheorem{lemma}[teor]{Lemma}
\newtheorem{notation}[teor]{Notation}
\newtheorem{prop}[teor]{Proposition}
\newtheorem{cor}[teor]{Corollary}
\newtheorem{rem}[teor]{Remark}
\newtheorem{exem}[teor]{Example}
\newtheorem{exems}[teor]{Examples}
\newcommand{\Spec}{\text{Spec}}
\newcommand{\Supp}{\text{Supp}}
\newcommand{\Gen}{\text{Gen}}
\newcommand{\T}{\mathcal{T}}
\newcommand{\D}{\mathcal{D}}
\newcommand{\C}{\mathcal{C}}
\newcommand{\F}{\mathcal{F}}
\newcommand{\p}{\mathbf{p}}
\newcommand{\Mod}{\text{-Mod}}
\newcommand{\Hom}{\text{Hom}}
\newcommand{\monic}{\xymatrix{\ar@{^(->}[r] & }}
\newcommand{\epic}{\xymatrix{\ar@{>>}[r] & }}
\newcommand{\iso}{\xymatrix{\ar[r]^{\sim} & }}
\newcommand{\flecha}{\xymatrix{\ar[r] &}}
\newcommand{\U}{\mathcal{U}_{\phi}}
\newcommand{\Ker}{\text{Ker}}
\newcommand{\Coker}{\text{Coker}}
\newcommand{\Imagen}{\text{Im}}
\newcommand{\Ht}{\mathcal{H}_{\phi}}
\newcommand{\limite}{\varinjlim_{\Ht}}
\newcommand{\TF}{\mathcal{TF}}
\def\Hom{\mathop{\rm Hom}\nolimits}
\def\End {\mathop{\rm End}\nolimits}
\def\Ext {\mathop{\rm Ext}\nolimits}
\def\Ker {\mathop{\rm Ker}\nolimits}
\def\Im {\mathop{\rm Im}\nolimits}
\title{Hearts of t-structures in the derived category of a commutative Noetherian ring}
\author{Carlos E. Parra \\
Departamento de Matem\'aticas\\ Universidad de los Andes \\ ({\bf 5101}) M\'erida\\ VENEZUELA\\
{\it carlosparra@ula.ve} \\  \\ Manuel Saor\'in  \\ Departamento de Matem\'aticas\\
Universidad de Murcia, Aptdo. 4021\\
30100 Espinardo, Murcia\\
SPAIN\\ {\it msaorinc@um.es} }
\begin{document}
\date{}
\maketitle

\footnote{Parra is supported by a grant from the Universidad de los Andes (Venezuela) and Saor\'in is
supported by research projects from the Spanish Ministry of
Education (MTM2010-20940-C02-02) and from the Fundaci\'on 'S\'eneca'
of Murcia (04555/GERM/06), with a part of FEDER funds. The authors
thank these institutions for their help. We also thank Leovigildo Alonso and Ana Jerem\'ias for their remarks and for calling our attention on Ana's thesis. Finally, we thank the referee for the careful reading of the manuscript and his/her suggestions and comments.}

\begin{abstract}
{\bf Let $R$ be a commutative Noetherian ring and let $\mathcal D(R)$ be its (unbounded) derived category.    We show that all  compactly generated t-structures in $\mathcal D(R)$ associated to a left bounded filtration by supports of $\Spec (R)$ have a heart which is a Grothendieck category. Moreover, we identify all compactly generated t-structures  in $\mathcal D(R)$ whose heart is a module category. As geometric consequences for a compactly generated t-structure  $(\mathcal{U},\mathcal{U}^\perp [1])$ in the derived category $\mathcal{D}(\mathbb{X})$ of an affine Noetherian scheme $\mathbb{X}$, we get the following: 1) If the sequence  $(\mathcal{U}[-n]\cap\mathcal{D}^{\leq 0}(\mathbb{X}))_{n\in\mathbb{N}}$ is stationary, then the heart $\mathcal{H}$ is a Grothendieck category; 2) If  $\mathcal{H}$ is a module category, then $\mathcal{H}$ is always equivalent to $\text{Qcoh}(\mathbb{Y})$, for some affine subscheme $\mathbb{Y}\subseteq\mathbb{X}$; 3) If $\mathbb{X}$ is connected, then: a)  when $\bigcap_{k\in\mathbb{Z}}\mathcal{U}[k]=0$, the heart $\mathcal{H}$ is a module category if, and only if, the given t-structure is a translation of the canonical t-structure in $\mathcal{D}(\mathbb{X})$; b) when $\mathbb{X}$ is irreducible, the heart $\mathcal{H}$ is a module category if, and only if, there are an affine subscheme $\mathbb{Y}\subseteq\mathbb{X}$ and an integer $m$ such that $\mathcal{U}$ consists of the complexes $U\in\mathcal{D}(\mathbb{X})$ such that the support of $H^j(U)$ is in $\mathbb{X}\setminus\mathbb{Y}$, for all $j>m$. }
\end{abstract}

{\bf Mathematics Subjects Classification:} 18E30, 13Dxx, 14xx, 16Exx

\section{Introduction}
T-structures in triangulated categories were introduced by Beilinson, Bernstein and Deligne \cite{BBD} in their study of perverse
sheaves on an algebraic or analytic variety. A t-structure in a triangulated category $\mathcal{D}$  is a pair of full
subcategories satisfying suitable axioms (see the precise definition
in next section) which guarantee that their intersection is an
abelian category $\mathcal{H}$, called the heart of the t-structure. One then naturally defines a cohomological functor $\tilde{H}:\mathcal{D}\longrightarrow\mathcal{H}$, a fact which allows to develop an intrinsic (co)homology theory,
where the homology 'spaces' are again objects of $\mathcal{D}$
itself.

T-structures have been used in many
branches of Mathematics, with special impact in Algebraic Geometry
and Representation Theory of Algebras. One line of research in the
topic has been the explicit construction, for concrete triangulated
categories, of wide classes of t-structures. This approach has led
to classification results in many cases  (see, e.g., \cite{Br},
\cite{GKR}, \cite{AJS}, \cite{St}, \cite{ST}, \cite{KN}...). A second line of
research consists in starting with a well-behaved class of
t-structures and try to find necessary and sufficient conditions on
a t-structure in the class so that the heart is a `nice' abelian
category. For instance, to give conditions for the heart to be a Grothendieck or even a
module category. So far, in this second line of research, the focus has been almosts exclusively put on the class of the so-called t-structures of Happel-Reiten-Smal\o \  \cite{HRS}.  Given a torsion pair $\mathbf{t}$ in an abelian category $\mathcal{A}$, these authors construct a t-structure in the bounded derived category $\mathcal{D}^b(\mathcal{A})$, which is the restriction of a t-structure in $\mathcal{D}(\mathcal{A})$ whenever this last category is well-defined (i.e. the Hom groups are sets). Note that even $\mathcal{D}^b(\mathcal{A})$ might have Hom groups which are not sets (see \cite{CN}).   The study of conditions under which the heart of this t-structure is a Grothendieck or module category has deserved a lot of attention in recent times (see \cite{HKM}, \cite{CGM}, \cite{CMT}, \cite{CG}, \cite{MT}, \cite{PS} and \cite{PS2}). 

In this  paper we quit the Happel-Reiten-Smal\o \ setting and move to study these questions for the heart of a compactly generated t-structure in the derived category $\mathcal D(R)$ of a commutative Noetherian ring $R$. Of course, if $\mathbb{X}=\Spec(R)$ is the associated affine scheme, then we have an equivalence of categories $\text{Qcoh}(\mathbb{X})\cong R\Mod$, where $\text{Qcoh}(\mathbb{X})$ is the category of quasi-coherent sheaves on $\mathbb{X}$. Hence, we  also have $\mathcal D(\mathbb{X}):=\mathcal D(\text{Qcoh}(\mathbb{X}))\cong\mathcal D(R)$. Since all Noetherian affine schemes are of this form (see \cite[Chapter 2]{H}), our results have a geometric interpretation. Roughly speaking, our results show that the hearts of essentially all compactly generated t-structures in $\mathcal{D}(\mathbb{X})$ are Grothendieck categories, while, when $\mathbb{X}$ is connected,  the ones whose heart is a module category (in principle over a not necessarily commutative ring) are obtained from translations of the canonical t-structure on $\mathcal{D}(\mathbb{Y})$, for some affine subscheme $\mathbb{Y}\subseteq\mathbb{X}$. Unlike the case when $\mathbb{X}$ is projective (see \cite{B}), this latter fact discards the appearance of module categories over noncommutative rings as hearts of t-structures in the derived category of an affine Noetherian scheme.

Concretely, the  following are geometric consequences of the two main results of the paper (see  Theorem \ref{teo. H is a Grothendieck c.} and Corollaries \ref{cor.modular case for left non-degenerate} and \ref{cor.modular for prime nilradical}). In the second result and all throughout the paper,  'module category' means a category which is equivalent to the category of (all) modules over an associative ring with unit, not necessarily commutative (see Subsection \ref{sub. Grothe. and module}).

\begin{teorA} \label{teor.Theorem A}
Let $\mathbb{X}$ be an affine Noetherian scheme and let $(\mathcal{U},\mathcal{U}^\perp [1])$ be a compactly generated t-structure in $\mathcal{D}(\mathbb{X})$ such that $\mathcal{U}\neq\mathcal{U}[-1]$ and the chain of subcategories $(\mathcal{U}[-n]\cap\mathcal{D}^{\leq 0}(\mathbb{X}))_{n\in\mathbb{N}}$ is stationary. The heart of this t-structure is a Grothendieck category. 
\end{teorA}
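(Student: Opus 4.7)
The plan is to reduce to the main Grothendieck criterion of the paper (Theorem \ref{teo. H is a Grothendieck c.}) by translating the hypotheses into properties of the classifying filtration by supports. By the Alonso--Jerem\'ias--Saor\'in classification, compactly generated t-structures in $\mathcal{D}(R)$ stand in bijection with the filtrations by supports $\phi:\mathbb{Z}\to\mathcal{P}(\Spec R)$ (non-increasing maps into the poset of specialization-closed subsets), via
\[
\mathcal{U}_\phi=\{U\in\mathcal{D}(R): \Supp H^j(U)\subseteq\phi(j)\text{ for all }j\in\mathbb{Z}\}.
\]
Writing $\mathcal{U}=\mathcal{U}_\phi$, the strategy is to show that the hypotheses of the theorem translate precisely to $\phi$ being a non-constant left-bounded filtration, at which point Theorem \ref{teo. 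H is a Grothendieck c.} yields the conclusion.

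First I would note that since $H^j(U[-n])=H^{j-n}(U)$, the shift by $-n$ corresponds to a shift on the filtration: $\mathcal{U}[-n]=\mathcal{U}_{\phi(\,\cdot\,-n)}$. Therefore
\[
\mathcal{U}[-n]\cap\mathcal{D}^{\leq 0}(\mathbb{X})=\{U: H^j(U)=0\text{ for }j>0,\ \Supp H^j(U)\subseteq\phi(j-n)\text{ for }j\leq 0\},
\]
which is an ascending chain in $n$ (since $\phi$ is non-increasing, so $\phi(j-n)$ grows with $n$). Its stationarity is equivalent, for each $j\leq 0$, to the eventual constancy of $(\phi(j-n))_n$, which in turn is equivalent to $\phi$ itself being eventually constant as $i\to -\infty$, i.e., $\phi$ being left-bounded. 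The remaining hypothesis $\mathcal{U}\neq\mathcal{U}[-1]$ reads $\phi\neq\phi(\cdot-1)$, which merely excludes the degenerate case of a globally constant $\phi$ (where the heart would be zero). Together these yield a non-degenerate, left-bounded $\phi$, and Theorem \ref{teo. H is a Grothendieck c.} finishes the proof.

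I expect the main obstacle to lie in the rigorous direction that stationarity of the chain forces left-boundedness of $\phi$; the converse is immediate from the explicit description. To go from the categorical stationarity to the pointwise stationarity of $\phi$, one must exploit that $\mathcal{U}_\phi$ faithfully remembers $\phi$ at every level. The cleanest way is to observe that whenever $\phi(j-n-1)\supsetneq\phi(j-n)$ for some $j\leq 0$ and any prime $p\in\phi(j-n-1)\setminus\phi(j-n)$, the shifted residue-field complex $(R/p)[-j]$ belongs to $\mathcal{U}[-(n+1)]\cap\mathcal{D}^{\leq 0}(\mathbb{X})$ but not to $\mathcal{U}[-n]\cap\mathcal{D}^{\leq 0}(\mathbb{X})$, witnessing strict growth at step $n$. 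Once this separation lemma is in hand, the translation is routine and the theorem follows.
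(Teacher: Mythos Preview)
Your proposal is correct and takes essentially the same approach as the paper: the paper's own proof of Theorem~A is literally the one-line remark that it is a geometric translation of Theorem~\ref{teo. H is a Grothendieck c.}, and you have correctly spelled out that translation, including the separation argument via $(R/\mathbf{p})[-j]$ showing that stationarity of the chain $(\mathcal{U}[-n]\cap\mathcal{D}^{\leq 0})_{n}$ is equivalent to left-boundedness of $\phi$.
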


\begin{teorB} \label{teore.Theorem B}
 Let $\mathbb{X}$ be an affine Noetherian scheme, let $(\mathcal{U},\mathcal{U}^\perp [1])$ be a compactly generated t-structure in the derived category  $\mathcal D(\mathbb{X}):=\mathcal{D}(\text{Qcoh}\mathbb{X})$ such that $\mathcal{U}[-1]\neq\mathcal{U}\neq 0$ and let $\mathcal{H}$ be the heart of this t-structure. The following assertions hold:

\begin{enumerate}
\item If $\mathcal{H}$ is a module category, then there exists an affine subscheme $\mathbb{Y}$ of $\mathbb{X}$ such that $\mathcal{H}\cong\text{Qcoh}(\mathbb{Y})$. 

\item When $\mathbb{X}$ is connected, the following statements hold true:

\begin{enumerate}
\item Suppose that  $(\mathcal{U},\mathcal{U}^\perp [1])$ is left  nondegenerate (i.e. $\bigcap_{k\in\mathbb{Z}}\mathcal{U}[k]=0$). Then $\mathcal H$ is a module category if, and only if, there exists an integer $m$ such that  $(\mathcal{U},\mathcal{U}^\perp [1])=(\mathcal{D}^{\leq m}(\mathbb{X}),\mathcal{D}^{\geq m}(\mathbb{X}))$. 
\item Suppose that $\mathbb{X}$ is irreducible. Then $\mathcal{H}$ is a module category if, and only if, there exist an integer $m$ and a nonempty  affine subscheme $\mathbb{Y}\subseteq\mathbb{X}$ such that 
  $\mathcal{U}$ consists of the complexes $U\in\mathcal D(\mathbb{X})$ such that $\Supp (H^i(U))\subseteq\mathbb{X}\setminus\mathbb{Y}$, for all $i>m$. In this case $\mathcal{H}$ is equivalent to $\text{Qcoh}(\mathbb{Y})$. 
\end{enumerate}
\end{enumerate}
\end{teorB}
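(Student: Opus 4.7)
The plan is to use the Alonso--Jerem\'{\i}as--Saor\'{\i}n correspondence between compactly generated t-structures in $\mathcal{D}(R)=\mathcal{D}(\mathbb{X})$ and sp-filtrations $\phi:\mathbb{Z}\to\mathcal{P}(\Spec R)$ given by $\mathcal{U}_\phi=\{U:\Supp H^i(U)\subseteq\phi(i)\text{ for all }i\}$. All three statements will be proved by extracting structural properties of $\phi$ from the hypothesis that $\mathcal{H}$ is a module category, and then translating back through this correspondence.

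For part (1), if $\mathcal{H}$ is a module category, it has a compact projective generator $G$, and $\mathcal{H}\cong\End_{\mathcal{H}}(G)^{op}\Mod$. I would first argue that having such a generator forces $\phi$ to satisfy the stationarity hypothesis of Theorem \ref{teo. H is a Grothendieck c.}, in particular to be left bounded and eventually empty on the right, so $\mathcal{H}$ is \emph{a priori} a Grothendieck category. The key technical point is then to produce a concrete generator from the filtration data (built, for instance, from Koszul-type resolutions or injective envelopes attached to the nonempty strata $\phi(i)\setminus\phi(i+1)$) and to compute its endomorphism ring directly in terms of $R$. This computation should yield a commutative ring obtained by localization and quotient operations on $R$, whose spectrum is an affine subscheme $\mathbb{Y}\subseteq\mathbb{X}$, together with an equivalence $\mathcal{H}\cong\text{Qcoh}(\mathbb{Y})$.

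For part (2)(a), left nondegeneracy translates to $\bigcap_n\phi(n)=\emptyset$. Applying (1) yields $\mathcal{H}\cong\text{Qcoh}(\mathbb{Y})$ for some affine $\mathbb{Y}$, and the construction there further describes $\mathbb{X}\setminus\mathbb{Y}$ as a specialization-closed subset coming from the stable part of $\phi$. Left nondegeneracy together with the affineness of $\mathbb{Y}$ forces $\mathbb{X}\setminus\mathbb{Y}$ to be simultaneously specialization-closed and generalization-closed, hence clopen in $\mathbb{X}$; connectedness then forces this clopen set to be empty, so $\mathbb{Y}=\mathbb{X}$ and $\phi$ takes only the values $\Spec R$ and $\emptyset$, yielding a shift of the canonical t-structure. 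The converse is routine since the heart of the canonical t-structure is $R\Mod\cong\text{Qcoh}(\mathbb{X})$.

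For (2)(b), irreducibility of $\mathbb{X}$ provides a unique generic point $\eta$. By (1), $\mathcal{H}\cong\text{Qcoh}(\mathbb{Y})$, and the analysis refines to show that $\mathbb{X}\setminus\mathbb{Y}$ must be a specialization-closed subset not containing $\eta$, with $\phi$ taking precisely the two-value shape $\phi(i)=\mathbb{X}\setminus\mathbb{Y}$ for $i>m$ and $\phi(i)=\Spec R$ for $i\leq m$. The converse is verified by constructing the t-structure from this $\phi$ and identifying its heart with $\text{Qcoh}(\mathbb{Y})$ directly. The hard part, I expect, lies in (1): namely, showing commutativity of $\End_{\mathcal{H}}(G)^{op}$. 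Since module categories over noncommutative rings do appear as hearts in the projective setting (cf.\ \cite{B}), commutativity is a genuinely affine phenomenon, and the plan is to exploit the explicit form of $G$ derived from the sp-filtration to express $\End_{\mathcal{H}}(G)^{op}$ as a quotient of a localization of the commutative ring $R$.
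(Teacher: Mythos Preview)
Your overall plan has the right shape---translate to sp-filtrations via the Alonso--Jerem\'ias--Saor\'in bijection and analyze $\phi$---but there are genuine gaps and a wrong step.

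First, the claim that the module-category hypothesis forces $\phi$ to be ``eventually empty on the right'' is false. In the paper's main structural result (Theorem \ref{teor.modular main theorem1}), the intersection $Z=\bigcap_i\phi(i)$ is allowed to be a nonempty proper sp-subset, and the heart can still be a module category (equivalent to $A\Mod$ for $A$ a perfect localization of $R$). What the module-category hypothesis does force is that each stratum $\phi(i)\setminus Z$ is stable under both specialization \emph{and generalization} within $\Spec(R)\setminus Z$; this is the key structural fact, and your outline never explains how it would be obtained.

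Second, the paper's mechanism for extracting this is quite different from your proposed ``build an explicit progenerator and compute its endomorphism ring.'' The argument instead runs: (i) localization at primes preserves the module-category property of the heart (Corollary \ref{cor.modular heart via localization}); (ii) for a connected local ring with eventually trivial filtration, the heart is a module category only for a shift of the canonical t-structure (Proposition \ref{prop.modular heart connected case}); (iii) combining these, each $\phi(i)\setminus Z$ is clopen in $\Spec(R)\setminus Z$, which yields a finite decomposition and idempotents $e_0,\dots,e_t\in R_Z$; (iv) passing to the quotient $\mathcal D(R\Mod/\mathcal T_Z)$ via Lemma \ref{lem. q(U) is a t-struc.}, the t-structure decomposes as a product of shifts of canonical t-structures on the factors $R_Ze_k\Mod/(\text{torsion})$; (v) each such factor is then a module category, and Proposition \ref{prop.perfect localizations} identifies this with the sp-subset being \emph{perfect}, i.e., $R_Z$-flat with the expected spectrum. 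The commutativity of the resulting ring is thus never argued by computing $\End_{\mathcal H}(G)$ directly; it falls out because perfect localizations of $R$ are commutative.

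Finally, the geometric translation to affine subschemes uses the (nontrivial) bijection $Z\mapsto\mathbb X\setminus Z$ between perfect sp-subsets of $\Spec(R)$ and affine subschemes of $\mathbb X$; you should cite or prove this. With that in hand, parts 2(a) and 2(b) follow as you sketch, but only after the structural input above---your clopenness argument in 2(a) is correct in spirit, yet you have not supplied the reason (localization at primes plus Proposition \ref{prop.modular heart connected case}) that $\phi(i)$ is generalization-closed.
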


The organization of the paper goes as follows. The reader is referred to Section 2 for the pertinent definitions. In Section 2 we introduce and recall the concepts and results which are most relevant for the latter sections of the paper. In particular, we recall the bijection (see \cite[Theorem 3.1]{AJS}) between compactly generated t-structures of $\mathcal{D}(R)$ and filtrations by supports of $\Spec (R)$.  In Section 3 we give some properties of the heart $\mathcal{H}$ of a compactly generated t-structure in the derived category $\mathcal{D}(R)$ of a commutative Noetherian ring, showing in particular  that $\mathcal{H}$ always has a generator (Proposition \ref{generator of H}) and that the AB5 condition on $\mathcal{H}$ is a local property (Corollary \ref{cor. Hp is AB5}). In Section 4 we give the first main result of the paper, corresponding to Theorem A above, which states that if $\phi$ is a left bounded filtration by supports of $\Spec(R)$, then the heart of the associated t-structure is a Grothendieck category (see Theorem \ref{teo. H is a Grothendieck c.}). The proof is done by settling first the case when the filtration is eventually trivial (Theorem \ref{teo. AB5 finite}). The final Section 5 deals with the case in which the heart is a module category and it includes the second main result of the paper (see Theorem \ref{teor.modular main theorem1}). This result determines all the compactly generated t-structures in $\mathcal{D}(R)$ whose heart is a module category and has the above Theorem B as a corollary. The hardest-to-prove part of the theorem says, in essence, that if the heart of a compactly generated t-structure $(\mathcal{U},\mathcal{U}^\perp [1])$ in $\mathcal{D}(R)$ is a module category, then there is a stable under specialization subset $Z\subset\Spec (R)$ such that if $R_Z$ is the associated ring of fractions, then there is a decomposition of $1$ in $R_Z$ as a sum of orthogonal idempotents, $1=\sum_{0\leq k\leq t}e_k$, such that: a) if an $R_Z$-module $N$ has its support in $Z$ when viewed as an $R$-module, then $e_kN=0$, for $k\neq 0$; b) a complex $U\in\mathcal D(R)$ is in $\mathcal{U}$ if, and only if, $\Supp (e_0H^i(R_Z\otimes_R^{\mathbf{L}}U))\subseteq Z$, for all $i\in\mathbb{Z}$, and $e_k(R_Z\otimes_R^\mathbf{L}U)\in\mathcal D^{\leq m_k}(R_Ze_k)$, for $k=1,\dots,t$ and certain integers $m_1<m_2<\dots<m_t$.

\section{Terminology and preliminaries} \label{sec.Terminology}
All rings in this paper are associative with identity. 

\subsection{Some points  of Commutative Algebra} 

In this paragraph we assume that $R$ is commutative Noetherian ring and we denote by $\Spec (R)$ its spectrum.  For each ideal $\mathbf{a} \subseteq R$ we  put $\text{V}(\mathbf{a}):=\{\mathbf{p}\in \Spec(R)| \mathbf{a} \subseteq \mathbf{p}\}$. These subsets are the closed subsets for the \emph{Zariski topology} in $\Spec (R)$. The so defined topological space is noetherian, i.e. satisfies ACC on open subsets. When one writes $1$ as a sum $1=\sum_{1\leq i\leq n}e_i$ of primitive orthogonal idempotents, we have a ring decomposition $R\cong Re_1\times \dots \times Re_n$ and can identify $\Spec(Re_i)$ with $V(R(1-e_i))$. These are precisely the connected components of $\Spec (R)$ as a topological space, so that we have a disjoint union $\Spec (R)=\bigcup_{1\leq i\leq n}\Spec(Re_i)$. We  call the rings $Re_i$ or the idempotents $e_i$ the \emph{connected components of $R$}. 

A subset $Z \subseteq \Spec(R)$ is stable under specialization if, for any couple of prime ideals $\mathbf{p} \subseteq \mathbf{q}$ with $\mathbf{p}\in Z$, it holds that $\mathbf{q}\in Z$. Equivalently, when $Z$ is a union of closed subsets of $\Spec(R)$. Such a subset will be called \emph{sp-subset} in the sequel. The typical example is the \emph{support of an $R$-module} $N$, denoted $\Supp (N)$,  which consists of the prime ideals $\mathbf{p}$ such that $N_\mathbf{p}\cong R_{\mathbf{p}} \otimes_{R} N\neq 0$. Here $(?)_\mathbf{p}$ denotes the localization at $\mathbf{p}$, both when applied to $R$-modules or to the ring $R$ itself.  Recall that the assignment $N\rightsquigarrow N_\mathbf{p}$ defines an exact functor $R\Mod\longrightarrow R_\mathbf{p}\Mod$. If $f:R\longrightarrow R_\mathbf{p}$ is the canonical ring homomorphism, then we have a natural isomorphism $(?)_\mathbf{p}\cong f^*$, where $f^*$ is the extension of scalars with respect to $f$. In particular, we have an adjoint pair of exact functors $(f^*,f_*)$, where $f_*:R_\mathbf{p}\Mod\longrightarrow R\Mod$ is the restriction of scalars. 

Recall that if $M$ is an $R$-module, then  the set of  \emph{associated primes} of $M$, denoted $\text{Ass}(M)$, is the subset of $\Supp (M)$ consisting of those $\mathbf{p}\in\Spec (R)$ such that $\mathbf{p}=\{a\in R:$ $ax=0\}$, for some $x\in M\setminus\{0\}$. 

\subsection{Grothendieck and module categories}\label{sub. Grothe. and module}

Given any category $\mathcal{C}$, a \emph{generator} in $\mathcal{C}$ is an object $G$ such that the functor $\Hom_\mathcal{C}(G,?):\mathcal C\longrightarrow\text{Sets}$ is faithful. In the case when $\mathcal{C}=\mathcal{A}$ is an abelian category with coproducts, an object $G$ is a generator precisely when all objects of $\mathcal A$ are epimorphic image of coproducts of copies of $G$. In this case, a \emph{set of generators} of $\mathcal{A}$ will be a set $\mathcal{S}$ of objects  such that $\coprod_{S\in\mathcal{S}}S$ is a generator of $\mathcal{A}$. Note that $\mathcal{A}$ has a generator if, and only if, it has a set of generators. 

In general, given any additive category $\mathcal{C}$ with coproducts, an object $X$ will be called \emph{compact}, when the functor $\Hom_{\mathcal C}(X,?):\mathcal C\longrightarrow Ab$ preserves coproducts.
Let $I$ be a directed set and  $[(X_i)_{i\in
I},(u_{ij})_{i\leq j}]$ be an $I$-direct system in $\mathcal C$. We put $X_{ij}=X_i$, for all $i\leq j$. The
\emph{colimit-defining morphism} associated to the direct system is
the unique morphism $f:\coprod_{i\leq
j}X_{ij}\longrightarrow\coprod_{i\in I}X_i$ such that if
$\lambda_{kl}:X_{kl}\longrightarrow\coprod_{i\leq j}X_{ij}$ and
$\lambda_j:X_j\longrightarrow\coprod_{i\in I}X_i$ are the canonical
morphisms into the coproducts, then
$f\circ\lambda_{ij}=\lambda_i-\lambda_j\circ u_{ij}$ for all $i\leq
j$. It is well-known (see, e.g. \cite[Proposition II.6.2]{P}) that the direct system has a colimit (i.e. $\varinjlim X_i$ exists) if, and only if, $f$ has a cokernel, in which case we have $\varinjlim X_i=\Coker(f)$.  

Recall that a cocomplete  abelian category is AB5 when direct limits are exact in the category. If, in addition, the category has a generator, we call it a \emph{Grothendieck category}. The main example of Grothendieck category is the category $A\Mod$ of (left) modules over a not necessarily commutative (but always associate unital in this paper) ring $A$. Any category equivalent to $A\Mod$ for such a ring will be called a \emph{module category}. Recall that an object $P$ of a cocomplete abelian category is a \emph{progenerator} when it is a compact projective generator.  The following well-known result of Gabriel and Mitchell (see \cite[Corollary 3.6.4]{Po}) identifies the module categories within the class of cocomplete abelian categories.

\begin{prop} \label{prop.Gabriel-Mitchell}
Let $\mathcal{A}$ be a cocomplete abelian category. It is a module category if, and only if, it has a progenerator. In such case, if $P$ is a progenerator of $\mathcal{A}$, then the functor $\Hom_\mathcal{A}(P,?):\mathcal{A}\longrightarrow\End_\mathcal{A}(P)^{op}\Mod$ is an equivalence of categories.  
\end{prop}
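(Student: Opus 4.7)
The forward implication is immediate: if $\mathcal{A}\cong A\Mod$ for some ring $A$, then the regular module ${}_AA$ is a progenerator of $A\Mod$, hence its image under the equivalence is a progenerator of $\mathcal{A}$. The content of the proposition lies in the reverse implication, which also yields the explicit form of the equivalence. My plan is to construct a quasi-inverse to $F:=\Hom_\mathcal{A}(P,-):\mathcal{A}\longrightarrow B\Mod$, where $B:=\End_\mathcal{A}(P)^{op}$.

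The progenerator hypothesis translates into three properties of $F$ which will be used throughout: $F$ is \emph{faithful} because $P$ is a generator; \emph{exact} because $P$ is projective; and it \emph{preserves coproducts} because $P$ is compact. Together with the identification $F(P)\cong B$ as left $B$-modules, these imply $F(P^{(I)})\cong B^{(I)}$ for every set $I$, and that the canonical map
\[
\Hom_\mathcal{A}(P^{(J)},P^{(I)})\longrightarrow\Hom_B(B^{(J)},B^{(I)})
\]
induced by $F$ is bijective for all sets $I,J$.

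Next, I would construct $G:B\Mod\longrightarrow\mathcal{A}$ as follows. For a $B$-module $M$, choose a free presentation $B^{(J)}\stackrel{\alpha}{\longrightarrow}B^{(I)}\longrightarrow M\longrightarrow 0$. By the bijection above, $\alpha$ lifts uniquely to a morphism $\tilde\alpha:P^{(J)}\longrightarrow P^{(I)}$ in $\mathcal{A}$, and I set $G(M):=\Coker(\tilde\alpha)$. To extend $G$ to morphisms, any $f:M\to M'$ lifts to a chain map between chosen presentations via projectivity of the free $B$-modules, and the induced morphism on cokernels is independent of the chosen lift (a second lift differs by a chain homotopy, again by projectivity). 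Verifying that this assignment is functorial and well-defined up to canonical isomorphism on the choice of presentation is the main technical bookkeeping of the proof.

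Finally, I would verify the two natural isomorphisms making $(G,F)$ mutually quasi-inverse. Applying $F$ to the defining presentation $P^{(J)}\to P^{(I)}\to G(M)\to 0$ and using exactness and coproduct-preservation of $F$ recovers the original presentation of $M$, giving $FG(M)\cong M$ naturally. For the other direction, since $P$ is a generator in a cocomplete abelian category, every $X\in\mathcal{A}$ admits a presentation $P^{(J)}\to P^{(I)}\to X\to 0$ (first an epimorphism $P^{(I)}\twoheadrightarrow X$, then apply the same to its kernel); applying $F$ gives a presentation of $F(X)$, and by construction $GF(X)\cong\Coker(P^{(J)}\to P^{(I)})\cong X$ naturally. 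The main obstacle is not any single step but the naturality bookkeeping around the choice of presentations, which becomes routine once the displayed bijection and the exactness/coproduct-preservation of $F$ are in hand.
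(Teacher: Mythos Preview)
Your argument is correct and is essentially the classical proof of the Gabriel--Mitchell theorem: once $F=\Hom_\mathcal{A}(P,-)$ is exact, faithful, coproduct-preserving, and restricts to a bijection on morphisms between free objects, the construction of the quasi-inverse via free presentations goes through exactly as you outline. The naturality bookkeeping you flag is indeed routine.

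As for comparison with the paper: there is nothing to compare. The paper does not prove this proposition at all; it simply records it as a well-known result and refers the reader to \cite[Corollary 3.6.4]{Po}. Your write-up supplies what the paper deliberately omits.
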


\subsection{Hereditary torsion pairs}
Let $\mathcal{G}$ be a Grothendieck category.  A \emph{hereditary torsion class} or \emph{localizing subcategory} of $\mathcal{G}$ is a full subcategory $\mathcal{T}$ which is closed under taking subobjects, quotients, extensions and arbitrary coproducts. In such case, if $\mathcal F=\mathcal T^\perp$ is the class of objects $F$ such that $\Hom_R(T,F)=0$, for all $T\in\mathcal T$, then  $\mathbf{t}=(\mathcal T,\mathcal F)$ is a \emph{hereditary torsion pair} or \emph{hereditary torsion theory} in $\mathcal{G}$ (see \cite[Chapter VI]{S}). Note that the class $\mathcal{F}$ is closed under taking injective envelopes in $\mathcal G$. When $\mathcal G= A\Mod$ is the module category over a not necessarily commutative ring $A$, a module  in $\mathcal T$ is called a \emph{torsion module} while a module in $\mathcal F$ is called a \emph{torsionfree module}. 

If, for each  object $M$ of $\mathcal G$, we denote by $t(M)$ the largest subobject of $M$ which is $\mathcal T$, then the assignments $M\rightsquigarrow t(M)$ and $M\rightsquigarrow (1:t)(M):=M/t(M)$ define idempotent functors $t, (1:t):\mathcal{G}\longrightarrow\mathcal{G}$. We will call $t$ (resp. $(1:t)$) the \emph{torsion radical (resp. coradical)} associated to the torsion pair.

When $A=R$ is commutative Noetherian and $\mathcal G=R\Mod$, we have the following precise description (see \cite[Section 6.4]{G} \cite[Subsection VI.6.6]{S}):

\begin{prop} \label{prop.sp-subsets versus torsion pairs}
Let $R$ be a commutative Noetherian ring. The assignment $Z\rightsquigarrow (\mathcal T_Z,\mathcal T_Z^\perp)$ defines a bijection between the sp-subsets of $\Spec (R)$ and the hereditary torsion pairs in $R\Mod$, where $\mathcal T_Z$ is the class of modules $T$ such that $\Supp (T)\subseteq Z$. Its inverse takes $(\mathcal T,\mathcal T^\perp)$ to the set $Z_{\mathcal T}$ of prime ideals $\mathbf{p}$ such that $R/\mathbf{p}$ is in $\mathcal{T}$. Moreover $\mathcal T_Z^\perp$ consists of the $R$-modules $F$ such that $\text{Ass}(F)\cap Z=\emptyset$. 
\end{prop}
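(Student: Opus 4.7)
The plan is to verify the bijection in both directions, then identify the perpendicular class.

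First I would check that for any sp-subset $Z$, the class $\mathcal{T}_Z=\{M : \Supp(M)\subseteq Z\}$ is a hereditary torsion class. This is routine from the standard behavior of support: $\Supp$ commutes with coproducts and satisfies $\Supp(M)=\Supp(M')\cup\Supp(M'')$ on short exact sequences $0\to M'\to M\to M''\to 0$, which immediately gives closure under subobjects, quotients, extensions, and coproducts. Next I would verify that $Z_{\mathcal{T}_Z}=Z$ using the identity $\Supp(R/\mathbf{p})=V(\mathbf{p})$: this gives $R/\mathbf{p}\in\mathcal{T}_Z$ iff $V(\mathbf{p})\subseteq Z$, which is equivalent to $\mathbf{p}\in Z$ precisely because $Z$ is stable under specialization.

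The main step is showing $\mathcal{T}=\mathcal{T}_{Z_\mathcal{T}}$ for every hereditary torsion class $\mathcal{T}$. Let $Z:=Z_\mathcal{T}$, which is sp because $R/\mathbf{p}\twoheadrightarrow R/\mathbf{q}$ whenever $\mathbf{p}\subseteq\mathbf{q}$, and $\mathcal{T}$ is closed under quotients. For the inclusion $\mathcal{T}\subseteq\mathcal{T}_Z$, let $M\in\mathcal{T}$ and take any $\mathbf{q}\in\Supp(M)$; I would pick a finitely generated submodule $N\subseteq M$ with $\mathbf{q}\in\Supp(N)$ and use that, since $R$ is Noetherian and $N$ finitely generated, $\Supp(N)=\bigcup_{\mathbf{p}\in\text{Ass}(N)}V(\mathbf{p})$; every $\mathbf{p}\in\text{Ass}(N)$ yields an embedding $R/\mathbf{p}\hookrightarrow N\subseteq M$, forcing $R/\mathbf{p}\in\mathcal{T}$, i.e. $\mathbf{p}\in Z$. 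Specialization-stability of $Z$ then gives $\mathbf{q}\in Z$. For the reverse inclusion, reduce first to the finitely generated case: any $M$ is the directed union of its finitely generated submodules, and because $\mathcal{T}$ is closed under coproducts and quotients it is closed under directed colimits (via the colimit-defining morphism recalled in the preliminaries). If $M\in\mathcal{T}_Z$ is finitely generated, then by the standard prime filtration theorem for finitely generated modules over a Noetherian ring there is a finite filtration of $M$ with factors of the form $R/\mathbf{p}_i$ where $\mathbf{p}_i\in\Supp(M)\subseteq Z$; each $R/\mathbf{p}_i$ lies in $\mathcal{T}$, so $M\in\mathcal{T}$ by closure under extensions.

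Finally, for the description of $\mathcal{T}_Z^\perp$, I would observe that $F\in\mathcal{T}_Z^\perp$ iff $\Hom_R(R/\mathbf{p},F)=0$ for every $\mathbf{p}\in Z$, since the modules $R/\mathbf{p}$ with $\mathbf{p}\in Z$ form a set of generators of $\mathcal{T}_Z$ (this last fact follows from the prime filtration argument together with closure under directed colimits). If some $\mathbf{q}\in\text{Ass}(F)\cap Z$ existed, then $R/\mathbf{q}\hookrightarrow F$ would produce a nonzero morphism, contradicting $F\in\mathcal{T}_Z^\perp$. Conversely, if $\Hom_R(R/\mathbf{p},F)\neq 0$ for some $\mathbf{p}\in Z$, pick a nonzero $x\in F$ with $\mathbf{p}\subseteq\text{Ann}(x)$; using Noetherianity, $R/\text{Ann}(x)\hookrightarrow F$ has a nonempty associated-prime set, and any $\mathbf{q}\in\text{Ass}(R/\text{Ann}(x))\subseteq\text{Ass}(F)$ satisfies $\mathbf{q}\supseteq\mathbf{p}$, so $\mathbf{q}\in Z$ by specialization-stability, giving $\text{Ass}(F)\cap Z\neq\emptyset$.

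The main obstacle is the reduction in the step $\mathcal{T}_Z\subseteq\mathcal{T}$ from arbitrary modules to finitely generated ones; this is where both the Noetherian hypothesis (to guarantee that $\mathcal{T}_Z$ is generated by the $R/\mathbf{p}$, $\mathbf{p}\in Z$, through prime filtrations) and the closure of hereditary torsion classes under directed colimits are essential.
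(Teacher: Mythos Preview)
Your argument is correct. Note, however, that the paper does not actually prove this proposition: it is stated as a known classical result with references to Gabriel \cite[Section 6.4]{G} and Stenstr\"om \cite[Subsection VI.6.6]{S}. Your proof is a clean self-contained account of exactly that classical argument, relying on prime filtrations of finitely generated modules (as in \cite[Theorem 6.4]{M}, which the paper itself invokes later in Lemma~\ref{lem.Ext-orthogonal}) and on closure of hereditary torsion classes under directed colimits.
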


In the situation of last proposition, we will denote by $\Gamma_Z$ the torsion radical associated to the torsion pair $(\mathcal T_Z,\mathcal T_Z^\perp)$. It is worth noting that here, not only $\mathcal T_Z^\perp$, but also $\mathcal T_Z$ is closed under taking injective envelopes in $R\Mod$ (see \cite[Corollaire V.6.3]{G})

The following will be frequently used in the paper:

\begin{rem}\label{remark class closed direct.limit}
If $Z\subseteq\Spec (R)$ is an sp-subset, then the classes $\T_{Z}$ and $\F_{Z}=\mathcal T_Z^\perp$ are closed under taking direct limits in $R\Mod$. As a consequence,    the associated  torsion radical (resp.
coradical) functor $\Gamma_Z:R\Mod \flecha R\Mod$ (resp.
$(1:\Gamma_Z):R\Mod \flecha R\Mod$) preserves direct limits.
\end{rem}

\subsection{T-structures in triangulated categories with coproducts}\label{subsection of $L$}

Let $(\mathcal{D},?[1])$ be a triangulated category. A
\emph{t-structure} in $\mathcal{D}$ is a pair
$(\mathcal{U},\mathcal{W})$ of full subcategories, closed under
taking direct summands in $\mathcal{D}$,  which satisfy the
 following  properties:

\begin{enumerate}
\item[i)] $\text{Hom}_\mathcal{D}(U,W[-1])=0$, for all
$U\in\mathcal{U}$ and $W\in\mathcal{W}$;
\item[ii)] $\mathcal{U}[1]\subseteq\mathcal{U}$;
\item[iii)] For each $X\in Ob(\mathcal{D})$, there is a triangle $U\longrightarrow X\longrightarrow
V\stackrel{+}{\longrightarrow}$ in $\mathcal{D}$, where
$U\in\mathcal{U}$ and $V\in\mathcal{W}[-1]$.
\end{enumerate}
It is easy to see that in such case $\mathcal{W}=\mathcal{U}^\perp
[1]$ and $\mathcal{U}={}^\perp (\mathcal{W}[-1])={}^\perp
(\mathcal{U}^\perp )$. For this reason, we will write a t-structure
as $(\mathcal{U},\mathcal{U}^\perp [1])$. We will call $\mathcal{U}$
and $\mathcal{U}^\perp$ the \emph{aisle} and the \emph{co-aisle} of
the t-structure, respectively. The objects $U$ and $V$ in the above
triangle are uniquely determined by $X$, up to isomorphism, and
define functors
$\tau_\mathcal{U}:\mathcal{D}\longrightarrow\mathcal{U}$ and
$\tau^{\mathcal{U}^\perp}:\mathcal{D}\longrightarrow\mathcal{U}^\perp$
which are right and left adjoints to the respective inclusion
functors. We call them the \emph{left and right truncation functors}
with respect to the given t-structure. Note that
$(\mathcal{U}[k],\mathcal{U}^\perp [k])$ is also a t-structure in
$\mathcal{D}$, for each $k\in\mathbb{Z}$. The full subcategory
$\mathcal{H}=\mathcal{U}\cap\mathcal{W}=\mathcal{U}\cap\mathcal{U}^\perp
[1]$ is called the \emph{heart} of the t-structure and it is an
abelian category, where the short exact sequences 'are' the
triangles in $\mathcal{D}$ with their three terms in $\mathcal{H}$.
Moreover, with the obvious abuse of notation,  the assignments
$X\rightsquigarrow (\tau_{\mathcal{U}}\circ\tau^{\mathcal{U}^\perp
[1]})(X)$ and $X\rightarrow (\tau^{\mathcal{U}^\perp
[1]}\circ\tau_\mathcal{U})(X)$ give  naturally isomorphic functors $\mathcal D\longrightarrow\mathcal{H}$ and, hence, they  define
a functor $\tilde{H}:\mathcal{D}\longrightarrow\mathcal{H}$. This functor is
cohomological (see \cite{BBD}). It is shown in \cite[Lemma 3.1]{PS}  that its restriction $L:=\tilde{H}_{|\mathcal{U}}:\mathcal{U}\longrightarrow\mathcal H$ is left adjoint to the inclusion $j:\mathcal{H} \hookrightarrow \mathcal{U}$.

When $\mathcal{D}$ is a triangulated category with  coproducts,  a
 t-structure $(\mathcal{U},\mathcal{U}^\perp [1])$ in $\mathcal D$ is called
\emph{compactly generated} when there is a set
$\mathcal{S}\subset\mathcal{U}$, consisting of compact objects,
such that $\mathcal{U}^\perp$ consists of the $Y\in\mathcal{D}$ such
that $\text{Hom}_\mathcal{D}(S[n],Y)=0$, for all $S\in\mathcal{S}$
and integers $n\geq 0$. In such case, we say that $\mathcal{S}$ is a
\emph{set of compact generators of the aisle $\mathcal{U}$}. A t-structure $(\mathcal{U},\mathcal{U}^{\perp}[1])$ in $\D$ will be called \emph{left nondegenerate}, when $\underset{n\in \mathbb{Z}}{\bigcap} \mathcal{U}[n]=0$.

\subsection{Compactly generated t-structures in $\mathcal D (R)$}\label{sub. compactly generated}
Let $R$ be again a commutative Noetherian ring and let 
 $\mathcal{P}(S)$ denote the set of parts of $S$, for any set $S$. A \emph{filtration by supports} of $\Spec(R)$ is a decreasing map $\xymatrix{\phi:\mathbb{Z} \ar[r] & \mathcal{P}(\Spec(R))}$ such that $\phi(i)\subseteq \Spec(R)$ is a sp-subset for each $i\in \mathbb{Z}$. Filtrations by supports have turned out to be a fundamental tool to classify objects in the derived category of $R$. For instance, when the Krull dimension of $R$ is finite,  Stanley (see \cite[Theorem A]{St}) gave a bijection between the  nullity classes in the  subcategory $\mathcal{D}^b_{\text{fg}}(R)$ of complexes with finitely generated bounded homology and the filtrations by support of $\text{Spec}(R)$. Simultaneously, another bijection,  with 'nullity classes in $\mathcal{D}^b_{\text{fg}}(R)$' replaced by 'compactly generated t-structures in $\mathcal{D}(R)$' and without restriction  on the Krull dimension, was given by Alonso, Jerem\'ias and Saor\'in (see \cite[Theorem 3.1]{AJS}). In this paper, we will frequently use the last mentioned bijection and the terminology of  that paper.

To abbreviate, we will refer to a filtration by supports of $\Spec(R)$ simply by a \emph{sp-filtration} of $\Spec(R)$. For each integer $k$, we denote by $(\mathcal T_k,\mathcal F_k)$ the hereditary torsion pair in $R\Mod$ associated to the sp-subset $\phi (k)$ (see Proposition \ref{prop.sp-subsets versus torsion pairs}), and will put $t_k=\Gamma_{\phi (k)}$ to denote the associated torsion radical. 
An sp-filtration $\phi$ will be called \emph{left  bounded} when there exists an integer $n$ such that $\phi (i)=\phi (n)$, for all $i\leq n$. We will call $\phi$ \emph{eventually trivial} when there is an integer $i$ such that $\phi (i)=\emptyset$. 

If $\mathcal{S}$ is any set of objects in $\mathcal D(R)$ then, by \cite[Proposition 3.2]{AJSo}, we know that the smallest full subcategory of $\D(R)$ closed under extensions, arbitrary coproducts and application of the shift functor $?[1]$ is an aisle of $\mathcal D(R)$. We will call it the \emph{aisle generated by $\mathcal{S}$} and will denote it by $\text{aisle} (\mathcal{S})$.
The corresponding t-structure in $\D(R)$ will be called the \emph{t-structure generated by $\mathcal{S}$}.  The co-aisle $\text{aisle}(\mathcal{S})^\perp$ consists of the complexes $Y$ such that $\Hom_{\D(R)}(S[k],Y)=0$, for all $S\in\mathcal S$ and all integers $k\geq 0$.

Given an  sp-filtration $\phi$ of $\Spec(R)$, we shall denote by $\mathcal U_\phi$ the aisle generated by all the stalk complexes $R/\mathbf{p}[-i]$, such that $i\in\mathbb{Z}$ and $\mathbf{p}\in\phi (i)$. The following is \cite[Theorem 3.11]{AJS}. In its statement $\mathbf{R}\Gamma_Z:\mathcal D(R)\longrightarrow\mathcal D(R)$ denotes the right derived functor of $\Gamma_Z$, for any sp-subset $Z$ of $\Spec (R)$.

\begin{prop} \label{prop.sp-filtrations versus t-structures}
The assignment $\phi\rightsquigarrow (\mathcal U_\phi,\mathcal U_\phi^\perp [1])$ gives a bijection between the filtrations by supports of $\Spec (R)$ and the compactly generated t-structures of $\D(R)$. Its inverse takes $(\mathcal U,\mathcal U^\perp)\rightsquigarrow\phi_\mathcal{U}$, where $\phi_{\mathcal U}(i)$ consists of the prime ideals $\p$ such that $R/\mathbf{p}[-i]\in\mathcal{U}$, for all $i\in\mathbb{Z}$. Moreover, in this situation, we have the following explicit descriptions:
 
\begin{center}                         
$\mathcal{U}_\phi =\{X\in\mathcal D(R):$ $\Supp (H^i(X))\subseteq\phi (i)\text{, for all }i\in\mathbb{Z}\}$

$\mathcal{U}_\phi^\perp =\{Y\in\mathcal{D}(R):$ $\mathbf{R}\Gamma_{\phi (i)}(Y)\in\mathcal D^{>i}(R)\text{, for all }i\in\mathbb{Z}\}$.
\end{center}
\end{prop}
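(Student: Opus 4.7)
The plan is to prove the three assertions of the proposition — the support description of $\mathcal{U}_\phi$, the local-cohomology description of $\mathcal{U}_\phi^\perp$, and the bijectivity — in that order.

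First, let $\mathcal{V}_\phi := \{X \in \D(R) : \Supp(H^i(X)) \subseteq \phi(i)$ for all $i \in \mathbb{Z}\}$. I would verify directly that $\mathcal{V}_\phi$ is closed under coproducts, extensions, and the shift $?[1]$ (the shift closure uses that $\phi$ is decreasing), so $\mathcal{V}_\phi$ is an aisle containing every generator $R/\mathbf{p}[-i]$ of $\mathcal{U}_\phi$; hence $\mathcal{U}_\phi \subseteq \mathcal{V}_\phi$. For the converse, given $X \in \mathcal{V}_\phi$, I would use a Postnikov-tower construction to reduce to the case of a stalk $M[-i]$ with $\Supp(M) \subseteq \phi(i)$, and then filter $M$ by a transfinite chain whose successive quotients are cyclic of the form $R/\mathbf{q}$ with $\mathbf{q} \in \Supp(M) \subseteq \phi(i)$ (such a filtration exists for any module over a Noetherian ring via Zorn's lemma applied to associated primes), exhibiting $M[-i]$ as a transfinite iterated extension of generators of $\mathcal{U}_\phi$.

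Second, for the co-aisle I would prove, for each $Y \in \D(R)$ and $i \in \mathbb{Z}$, the pointwise equivalence
\begin{equation*}
\Hom_{\D(R)}(R/\mathbf{p}[-i][n], Y) = 0 \text{ for all } \mathbf{p} \in \phi(i),\ n \geq 0 \iff \mathbf{R}\Gamma_{\phi(i)}(Y) \in \D^{>i}(R).
\end{equation*}
This reduces to observing that $\mathbf{R}\Gamma_{\phi(i)}$ is computed by applying the torsion radical $\Gamma_{\phi(i)}$ to an injective resolution of $Y$, and that the stalks $\{R/\mathbf{p}\}_{\mathbf{p} \in \phi(i)}$ generate the hereditary torsion class $\mathcal{T}_{\phi(i)}$ (Proposition \ref{prop.sp-subsets versus torsion pairs}), so that vanishing of $\Ext^j_R(R/\mathbf{p}, Y)$ for $j \leq i$ and all such $\mathbf{p}$ matches the stated condition. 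Compact generation then comes for free: replace each non-perfect stalk $R/\mathbf{p}[-i]$ by the Koszul complex $K(\mathbf{p}; R)[-i]$ on a finite generating set of $\mathbf{p}$, which is perfect hence compact, and whose non-negative-shift right orthogonal agrees with that of $R/\mathbf{p}[-i]$ by the same local-cohomology characterization.

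Finally, for the bijection, applying the support description to the stalks yields $\phi_{\mathcal{U}_\phi} = \phi$ at once. Conversely, given a compactly generated t-structure $(\mathcal{U}, \mathcal{U}^\perp[1])$, I would first check that $\phi_\mathcal{U}$ is a genuine sp-filtration — the closure of each $\phi_\mathcal{U}(i)$ under specialization uses the compact generation to bring Koszul complexes back into the picture and conclude $R/\mathbf{q}[-i] \in \mathcal{U}$ whenever $\mathbf{q} \supseteq \mathbf{p}$ with $\mathbf{p} \in \phi_\mathcal{U}(i)$. The inclusion $\mathcal{U}_{\phi_\mathcal{U}} \subseteq \mathcal{U}$ is tautological, while the reverse inclusion follows by verifying that every compact generator of $\mathcal{U}$ satisfies the pointwise support bounds defining $\mathcal{U}_{\phi_\mathcal{U}} = \mathcal{V}_{\phi_\mathcal{U}}$. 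The main obstacle I foresee is the reverse inclusion $\mathcal{V}_\phi \subseteq \mathcal{U}_\phi$ in the first stage: a complex with the correct pointwise support bounds need not be visibly built from stalks, and the Postnikov-plus-transfinite-filtration argument must be performed with care so that the resulting iterated extensions and homotopy colimits remain inside the aisle — which is precisely where the Noetherian hypothesis on $R$ is indispensable.
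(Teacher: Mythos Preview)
The paper does not prove this proposition at all: it is introduced with ``The following is \cite[Theorem 3.11]{AJS}'' and simply quoted. So there is no in-paper proof to compare against; your sketch is in effect a proposed outline of the Alonso--Jerem\'ias--Saor\'in argument itself.

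As such an outline, your first two stages are sound and match the strategy of \cite{AJS}: the equality $\mathcal{U}_\phi=\mathcal{V}_\phi$ via closure properties plus a transfinite dev\'issage of modules with support in $\phi(i)$, and the co-aisle description via the torsion-theoretic interpretation of $\mathbf{R}\Gamma_{\phi(i)}$. One terminological slip: showing $\mathcal{V}_\phi$ is closed under coproducts, extensions and $?[1]$ does not by itself make it an \emph{aisle} (that requires truncation functors); what you actually need, and what suffices for $\mathcal{U}_\phi\subseteq\mathcal{V}_\phi$, is that $\mathcal{V}_\phi$ is a suspended cocomplete subcategory containing the generators.

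The genuine gap is in the bijection, specifically the inclusion $\mathcal{U}\subseteq\mathcal{U}_{\phi_\mathcal{U}}$. You say it ``follows by verifying that every compact generator of $\mathcal{U}$ satisfies the pointwise support bounds''. But that verification is the crux: you must show that if $C\in\mathcal{U}$ is compact and $\mathbf{p}\in\Supp(H^i(C))$, then $R/\mathbf{p}[-i]\in\mathcal{U}$. Nothing you have written forces this. In \cite{AJS} this step is not formal: one uses that compact objects of $\D(R)$ are perfect complexes, localizes at $\mathbf{p}$ to reduce to the residue field, and invokes a Koszul/local-duality type argument to pull $R/\mathbf{p}[-i]$ (or rather $k(\mathbf{p})[-i]$) back into the aisle. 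Your sketch would need to supply this mechanism explicitly; as written it asserts the conclusion without indicating why a prime in the support of a homology module of a compact object of $\mathcal{U}$ must land in $\phi_\mathcal{U}(i)$.
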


 We will denote by $\mathcal{H}_{\phi}$ the heart of the t-structure $(\mathcal{U}_{\phi},\mathcal{U}_{\phi}^{\perp}[1]).$ The left and right truncation functors associated to $(\mathcal U_\phi ,\mathcal U_\phi^\perp [1])$ will be denoted by $\tau_\phi^{\leq}$ and $\tau_\phi^{>}$, respectively. We warn the reader not to confuse them with the truncation functors $\tau^{\leq m}$ and $\tau^{>m}$ associated to the canonical t-structure $(\mathcal{D}^{\leq m}(R),\mathcal{D}^{\geq m}(R))$, where $m$ is an integer. 

Concerning this last notation, whenever $n\in\mathbb{Z}$, we will denote by $\mathcal D^{\leq n}(R)$ (resp. $\mathcal D^{<n}(R)$), resp. $\mathcal D^{\geq n}(R)$ resp $\mathcal D^{> n}(R)$), the full subcategory of $\mathcal D(R)$ consisting of the complexes $M$ such that $H^i(M)=0$, for all $i>n$ (resp. $i\geq n$, resp.  $i<n$, resp. $i\leq n$). In case $m\leq n$ are integers, we will also denote by $\mathcal D^{[m,n]}(R)$ the full subcategory consisting of the complexes  whose nonzero homology modules are concentrated in degrees $m\leq i\leq n$.  Similarly $\D^-(R)$ (resp.  $\mathcal{D}^+(R)$) will denote the full subcategory consisting of the complexes $M$ such that $H^i(M)$, for all $i\gg0$ (resp. $i\ll0$).

\subsection{Noncommutative localization}\label{sub. localization}
Given a Grothendieck category $\mathcal{G}$ and a hereditary torsion class $\mathcal T$ in $\mathcal{G}$, there is a new Grothendieck category $\mathcal{G}/\mathcal T$, called the \emph{quotient category of $\mathcal{G}$ by $\mathcal T$},  and a \emph{quotient or localization functor} $q:\mathcal{G}\longrightarrow\mathcal{G}/\mathcal{T}$ satisfying the following two properties:

\begin{enumerate}
\item $q$ is exact and vanishes on $\mathcal T$;
\item $q$ is universal with respect to property 1. That is, if $F:\mathcal{G}\longrightarrow\mathcal{A}$ is an exact functor to any abelian category $\mathcal A$ which vanishes on $\mathcal T$, then $F$ factors through $q$ in a unique way (up to natural isomorphism).
\end{enumerate}
(see \cite{G}). Actually, by  Gabriel-Popescu's theorem (see \cite[Theorem X.4.1]{S}), all Grothendieck categories are of the form $R\Mod/\mathcal{T}$, for some ring $R$  and some hereditary torsion class $\mathcal{T}$ in $R\Mod$.  

The functor $q$ is exact and has a fully faithful right adjoint $j:\mathcal{G}/\mathcal T\longrightarrow\mathcal{G}$, called the \emph{section functor}, whose essential image is the full subcategory of $\mathcal{G}$ consisting of the objects $Y$ such that $\Hom_\mathcal{G}(T,Y)=0=\Ext_\mathcal{G}^1(T,Y)$, for all $T\in\mathcal T$. We will denote by $\mathcal G_{\mathcal{T}}$ this full subcategory and will call it the \emph{Giraud subcategory} associated to the hereditary torsion pair $(\mathcal T,\mathcal T^\perp)$.
The injective objects of $\mathcal{G}_{\mathcal T}$ are precisely the torsionfree injective objects of $\mathcal{G}$. The counit of the adjoint pair  $(q,j)$ is then an isomorphism and we will denote by $\mu :1_{\mathcal{G}}\longrightarrow j\circ q$ its unit. Note that $\Ker (\mu_M)$ and $\Coker (\mu_M)$ are in $\mathcal{T}$, for each object $M$ of $\mathcal{G}$.

When $\mathcal{G}=R\Mod$, for some  ring $R$, there is some extra information. The set of left ideals $\mathbf{a}$ of $R$ such that $R/\mathbf{a}$ is in $\mathcal{T}$ form a \emph{Gabriel topology} $\mathbf{F}=\mathbf{F}_\mathcal{T}$ (see \cite[Chapter VI]{S} for the definition), which is then a downward directed set. The $R$-module $R_\mathbf{F}:=
\varinjlim_{\mathbf{a}\in\mathbf{F}}\Hom_R(\mathbf{a},\frac{R}{t(R)})$ is then an object of $\mathcal{G}_\mathcal{T}$ and, moreover, admits a unique ring structure such that the composition of canonical maps

$$R\flecha \frac{R}{t(R)} \iso \Hom_R(R,\frac{R}{t(R)}) \flecha  R_\mathbf{F}$$
is a ring homomorphism. We  call $R_\mathbf{F}$ the \emph{ring of fractions of $R$ with respect to the hereditary torsion pair $(\mathcal{T},\mathcal{T}^\perp )$}.  Viewed as a morphism of $R$-modules this last ring  homomorphism gets identified with the unit map $\mu_R:R\longrightarrow (j\circ q)(R)$. We will simply put $\mu =\mu_R$ and it will become clear from the context when we refer to the ring homomorphism or the unit natural transformation. 
It turns out that each module in $\mathcal{G}_\mathcal{T}$ has a canonical structure of $R_\mathbf{F}$-module. Furthermore, the section functor factors in the form

$$\xymatrix{j:R\Mod/\mathcal T \ar[r]^{\hspace{0.4 cm}j'} & R_\mathbf{F}\Mod \ar[r]^{\hspace{0.15 cm}\mu_*} & R\Mod} $$
where $j'$ is fully faithful and $\mu_*$ is the restriction of scalars functor. Moreover, the functor $j'$ has an exact left adjoint $q':R_\mathbf{F}\Mod\longrightarrow R\Mod/\mathcal{T}$ whose kernel is $\mu_*^{-1}(\mathcal{T})$, i.e., the class of $R_\mathbf{F}$-modules $\tilde{T}$ such that $\mu_*(\tilde{T})\in\mathcal{T}$. This latter class is clearly a hereditary torsion class in $R_\mathbf{F}\Mod$, and then $\mathcal{G}_\mathcal{T}$ may
be also viewed as the  Giraud subcategory of $R_\mathbf{F}\Mod$ associated to the hereditary torsion pair $(\mu_*^{-1}(\mathcal{T}),\mu_*^{-1}(\mathcal{T})^\perp )$. Note that then $\mu_*$ induces an equivalence of categories $\frac{R_\mathbf{F}\Mod}{\mu_*^{-1}(\mathcal{T})} \iso \frac{R\Mod}{\mathcal{T}}$. Although the terminology is slightly changed, we refer the reader to \cite{G} and \cite[Chapters VI, IX, X]{S} for details on the concepts and their properties introduced and mentioned in this subsection. 

Note that, by the usual way of taking left and right derived functors, the above mentioned adjoint pairs induce adjoint pairs of triangulated functors $(\mathbf{L}q=q:\mathcal{D}(R)\longrightarrow\mathcal{D}(\frac{R\Mod}{\mathcal{T}}),\mathbf{R}j:\mathcal{D}(\frac{R\Mod}{\mathcal{T}})\longrightarrow\mathcal{D}(R))$ and $(\mathbf{L}q'=q':\mathcal{D}(R_\mathbf{F})\longrightarrow\mathcal{D}(\frac{R\Mod}{\mathcal{T}}),\mathbf{R}j':\mathcal{D}(\frac{R\Mod}{\mathcal{T}})\longrightarrow\mathcal{D}(R_\mathbf{F}))$. 

For simplicity,  when $R$ is Noetherian commutative and $\mathcal{T}=\mathcal{T}_Z$, where  $Z\subseteq\Spec (R)$ is a sp-subset, we will put $\mathcal{G}_Z=\mathcal{G}_{\mathcal{T}_Z}$ and $R_\mathbf{F}=R_Z$. The $R$- (resp. $R_Z$-)modules in $\mathcal{G}_Z$ will be called \emph{$Z$-closed}. It seems to be folklore that $R_Z$ is a commutative ring in this case (see \cite[Exercise IX.2]{S}). 

\section{Properties of the heart}

In the rest of the paper, unless otherwise stated, $R$ is a commutative Noetherian ring, $\phi$ will be a sp-filtration of $\Spec(R)$ and $\Ht$ will be the heart of the t-structure $(\mathcal U_\phi ,\mathcal U_\phi^\perp [1])$. We will denote by $\limite$ the direct limit in $\Ht$ and will reserve the symbol $\varinjlim$ for the direct limit in $R\Mod$. 

\subsection{Stalk complexes in the heart}

The following result is standard. We include a short proof for
completeness.

\begin{lemma} \label{lem.Ext-orthogonal}
Let $Z$ be a sp-subset of $\Spec(R)$ and let $\T_Z=\{T\in R\Mod:$
$\Supp (T)\subseteq Z\}$ be the associated hereditary torsion class
in $R\Mod$. For each integer $i\geq 0$ and each $R$-module $M$, the
following assertions are equivalent:

\begin{enumerate}
\item $\Ext_R^i(T,M)=0$, for all $T\in\T$;
\item $\Ext_R^i(R/\mathbf{p},M)=0$, for all $\mathbf{p}\in Z$.
\end{enumerate}
\end{lemma}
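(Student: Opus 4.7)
The forward direction $(1)\Rightarrow (2)$ is essentially tautological: for each $\mathbf{p}\in Z$, stability under specialization gives $\Supp(R/\mathbf{p})=V(\mathbf{p})\subseteq Z$, so $R/\mathbf{p}$ belongs to $\T_Z$, and specializing (1) at $T=R/\mathbf{p}$ yields (2).

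For $(2)\Rightarrow (1)$, I plan a two-stage approach. First I would treat the finitely generated case. Any finitely generated $T\in\T_Z$ admits, by the standard argument over a Noetherian ring, a finite prime filtration $0=T_0\subset T_1\subset\cdots\subset T_n=T$ with $T_k/T_{k-1}\cong R/\mathbf{q}_k$. The primes $\mathbf{q}_k$ all lie in $\Supp(T)\subseteq Z$, hence satisfy the hypothesis (2). Induction on $n$ combined with the excerpt
\[
\Ext^i_R(R/\mathbf{q}_k,M)\longrightarrow\Ext^i_R(T_k,M)\longrightarrow\Ext^i_R(T_{k-1},M)
\]
of the long exact sequence of Ext concludes this stage.

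Second, for arbitrary $T\in\T_Z$, I would build a smooth well-ordered ascending filtration $(T_\alpha)_{\alpha\leq\lambda}$ of $T$ with $T_0=0$, $T_\lambda=T$, $T_\beta=\bigcup_{\alpha<\beta}T_\alpha$ at limit ordinals, and $T_{\alpha+1}/T_\alpha\cong R/\mathbf{p}_\alpha$ for some $\mathbf{p}_\alpha\in Z$. The construction is iterative: whenever $T/T_\alpha\neq 0$, closure of $\T_Z$ under quotients gives $T/T_\alpha\in\T_Z$, so $\mathrm{Ass}(T/T_\alpha)$ is nonempty and each of its members lies in $Z$ (if $R/\mathbf{p}\hookrightarrow T/T_\alpha$, then $\mathbf{p}\in\Supp(T/T_\alpha)\subseteq Z$); pick $\mathbf{p}_\alpha$ among them and let $T_{\alpha+1}$ be the preimage of the corresponding copy of $R/\mathbf{p}_\alpha$. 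Transfinite induction on $\alpha$ then proves $\Ext^i_R(T_\alpha,M)=0$. Successor ordinals are handled exactly as in Stage 1; at a limit ordinal $\beta$ I would invoke the short exact sequence
\[
0\longrightarrow K\longrightarrow \bigoplus_{\alpha<\beta}T_\alpha\longrightarrow T_\beta\longrightarrow 0
\]
arising from the colimit presentation, where $K\in\T_Z$ by closure under submodules and coproducts.

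The main obstacle I expect is precisely this limit step, since $\Ext^i$ does not commute with filtered colimits in the first variable: the long exact sequence of the display above only exhibits $\Ext^i_R(T_\beta,M)$ as a quotient of $\Ext^{i-1}_R(K,M)$, which drags in the $(i-1)$-st Ext rather than the $i$-th. I expect to close this gap by strengthening the inductive statement to the simultaneous vanishing of $\Ext^j_R(T,M)$ for all $0\leq j\leq i$ under a correspondingly strengthened hypothesis, and then recovering the single-index form stated in the lemma by dimension shifting on $M$: embed $M$ into an injective $E$, split $E=\Gamma_Z(E)\oplus E'$ using that $\T_Z$ is closed under injective envelopes, and iterate on the cokernel to reduce to the base case $i=0$, which is handled directly by the $\mathrm{Ass}$-argument (if $\Hom_R(R/\mathbf{p},M)=0$ for all $\mathbf{p}\in Z$ then $\mathrm{Ass}(M)\cap Z=\emptyset$, so $M\in\F_Z$ and (1) at $i=0$ follows).
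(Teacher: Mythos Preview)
Your overall architecture matches the paper's: the finite prime filtration for finitely generated $T$, the transfinite chain with successive quotients $R/\mathbf{p}_\alpha$ for general $T$, and dimension shifting on $M$. You also correctly diagnose the obstacle at limit ordinals. The gap is in your proposed fix.

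The strengthened statement (A)---simultaneous vanishing of $\Ext^j_R(-,M)$ for $0\leq j\leq i$ under the correspondingly strengthened hypothesis---is indeed provable by the double induction you sketch: the outer induction on $i$ guarantees $\Ext^{i-1}_R(K,M)=0$ for the kernel $K$ in the colimit sequence, so the limit step goes through. The problem is the second half, recovering the single-index lemma from (A) by shifting down to $i=0$. The cosyzygy identity $\Ext^j_R(X,M')\cong\Ext^{j+1}_R(X,M)$ holds only for $j\geq 1$; for $j=0$ one has merely an exact sequence
\[
\Hom_R(X,E)\longrightarrow\Hom_R(X,M')\longrightarrow\Ext^1_R(X,M)\longrightarrow 0,
\]
so $\Ext^1_R(R/\mathbf{p},M)=0$ gives surjectivity of the first map, not $\Hom_R(R/\mathbf{p},M')=0$. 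The splitting $E=\Gamma_Z(E)\oplus E'$ does not repair this: for $T\in\T_Z$ one still needs every $T\to M'$ to lift to $E$, and $\Gamma_Z(E)\to\Gamma_Z(M')$ need not be surjective. Thus you can reduce the lemma at level $i$ to level $1$, but not to level $0$, and at level $1$ your hypothesis is too weak to feed into (A).

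The paper handles exactly this point: it dimension-shifts to $i=1$ (not $0$) and then invokes Eklof's lemma, which is the standard device that propagates $\Ext^1$-orthogonality along a transfinite filtration whose successive quotients lie in a fixed class. With Eklof's lemma in hand there is no need for the colimit short exact sequence at limit stages, nor for the auxiliary statement (A).
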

\begin{proof}
Put $\T:=\T_Z$ for simplicity.  
Every finitely generated module in $\T$ admits a  finite filtration
whose factors are isomorphic to modules of the form $R/\mathbf{p}$,
with $\mathbf{p}\in Z$ (see \cite[Theorem 6.4]{M}). In particular, a module $F$ is in $\T^\perp$ if, and only if, $\Hom_R(R/\mathbf{p},F)=0$, for all $\mathbf{p}\in Z$. This proves the case $i=0$. 

 For
$i>0$, we denote by $\Phi_i(1)$ and $\Phi_i(2)$ the classes of
modules $M$ which satisfy condition 1 and 2, respectively. Note that
if $i>1$ then, by dimension shifting, we have that $M\in\Phi_i(k)$
if, and only if, $\Omega^{1-i}(M)\in\Phi_1(k)$, for $k=1,2$. Hence,
the proof is  reduced to the case $i=1$. Then using the finite filtration mentioned in the previous paragraph, given $T\in\T$,
it is  easy to construct, by transfinite induction,   an ascending transfinite chain
$(T_\alpha )_{\alpha <\lambda}$ of submodules of $T$, for some ordinal $\lambda$,  such that: i)
$\bigcup_{\alpha <\lambda}T_\alpha =T$; ii) $T_\alpha
=\bigcup_{\beta <\alpha}T_\beta$, whenever $\alpha$ is a limit
ordinal $<\lambda$; iii) $T_{\alpha +1}/T_\alpha$ isomorphic to
$R/\mathbf{p}$, for some $\mathbf{p}\in Z$, whenever $\alpha
+1<\lambda$. The result is then a direct consequence of Eklof's
lemma (see \cite[3.1.2]{GT}).
\end{proof}

\begin{notation}
For each  integer $m$, we will denote by $\mathcal{TF}_{m}$ the
class of $R$-modules $Y$ such that $Y[-m]\in \Ht$.
\end{notation}

\begin{prop}\label{prop. description of stalk}
Let $m$ be an ingeter. For each $Y\in \T_{m}$, the following assertions are equivalent:
\begin{enumerate}
\item[1)] $Y\in \TF_{m};$
\item[2)] $\Ext^{k-1}_{R}(R/\mathbf{p},Y)=0$, for each integer $k\geq 1$ and for each $\mathbf{p}\in \phi(m+k);$
\item[3)]  $\Ext^{k-1}_{R}(T,Y)=0$, for each integer $k\geq 1$ and
each $T\in\T_{m+k}$.
\end{enumerate}
\end{prop}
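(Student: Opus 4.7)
The equivalence $(2)\Leftrightarrow(3)$ is immediate: for each fixed $k\geq 1$ the set $\phi(m+k)$ is an sp-subset of $\Spec(R)$ and $\mathcal{T}_{m+k}=\mathcal{T}_{\phi(m+k)}$ is the associated hereditary torsion class, so Lemma \ref{lem.Ext-orthogonal} (applied with $i=k-1$ and $Z=\phi(m+k)$) translates the vanishing of $\Ext^{k-1}_R(T,Y)$ for every $T\in\mathcal{T}_{m+k}$ into the vanishing on the generating subfamily $\{R/\mathbf{p}:\mathbf{p}\in\phi(m+k)\}$. The plan therefore concentrates on $(1)\Leftrightarrow(2)$.

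For this, I would first observe that $Y[-m]\in\mathcal{U}_\phi$ is automatic from the hypothesis $Y\in\mathcal{T}_m$: the stalk complex $Y[-m]$ has cohomology concentrated in degree $m$, where it equals $Y$, and $\Supp(Y)\subseteq\phi(m)$, so the explicit description of $\mathcal{U}_\phi$ given in Proposition \ref{prop.sp-filtrations versus t-structures} applies. Consequently $Y[-m]\in\mathcal{H}_\phi$ if and only if $Y[-m]\in\mathcal{U}_\phi^\perp[1]$, i.e.\ $Y[-m-1]\in\mathcal{U}_\phi^\perp$.

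Next I would use compact generation: since $(\mathcal{U}_\phi,\mathcal{U}_\phi^\perp[1])$ is the t-structure generated by the set of compact objects $\{R/\mathbf{p}[-i]\mid i\in\mathbb{Z},\ \mathbf{p}\in\phi(i)\}$, we have $Y[-m-1]\in\mathcal{U}_\phi^\perp$ if and only if
\[
\Hom_{\mathcal{D}(R)}\bigl(R/\mathbf{p}[-i+n],\,Y[-m-1]\bigr)=0
\]
for all $n\geq 0$, $i\in\mathbb{Z}$ and $\mathbf{p}\in\phi(i)$. Rewriting, this $\Hom$-group is $\Ext^{\,i-n-m-1}_R(R/\mathbf{p},Y)$, which is automatically zero unless the exponent is nonnegative, so the condition only involves indices with $i-n-m-1\geq 0$, i.e.\ $i\geq m+1+n+(\text{something}\geq 0)$. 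Setting $k-1:=i-n-m-1\geq 0$ gives $i=m+k+n\geq m+k$; and, since $\phi$ is decreasing, $\phi(i)\subseteq\phi(m+k)$ for $i\geq m+k$. The whole family of conditions therefore collapses to
\[
\Ext^{k-1}_R(R/\mathbf{p},Y)=0\quad\text{for every }k\geq 1\text{ and every }\mathbf{p}\in\phi(m+k),
\]
which is precisely $(2)$.

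The one step that needs care (and which I anticipate as the main bookkeeping obstacle) is the translation between the shift $Y[-m-1]\in\mathcal{U}_\phi^\perp$ and the exact Ext-indexing in condition $(2)$: one must check that the monotonicity of $\phi$ really lets every $\Ext^{k-1}_R(R/\mathbf{p},Y)$ coming from an arbitrary $(i,n)$ with $\mathbf{p}\in\phi(i)$ be absorbed into the single family indexed by $\mathbf{p}\in\phi(m+k)$, and conversely that assuming $(2)$ recovers the vanishing for all such $(i,n)$. Once that verification is carried out, the equivalence of $(1)$ and $(2)$ follows, completing the proof together with $(2)\Leftrightarrow(3)$.
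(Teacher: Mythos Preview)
Your proposal is correct and follows essentially the same route as the paper's proof: reduce membership in $\mathcal{H}_\phi$ to $Y[-m-1]\in\mathcal{U}_\phi^\perp$, test this against the compact generators $R/\mathbf{p}[-i]$, convert the resulting $\Hom$'s into $\Ext$'s, discard negative degrees, and reindex by $k=i-m$; then invoke Lemma~\ref{lem.Ext-orthogonal} for $(2)\Leftrightarrow(3)$. The only difference is cosmetic: the paper tests against $R/\mathbf{p}[-i]$ alone (without your extra nonnegative shift $n$), which is legitimate because the decreasing property of $\phi$ already makes the generating set closed under positive shifts, so your parameter $n$ is redundant and the collapse you describe is immediate once you set $n=0$.
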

\begin{proof}
$1)\Longleftrightarrow 2)$ Note that $Y[-m]\in\mathcal{U}_\phi$. We
then have the following chain of double implications:

\begin{center}
$Y\in\T\F_m$ $\Longleftrightarrow$ $Y[-m]\in\Ht$
$\Longleftrightarrow$ $Y[-m]\in\mathcal{U}_\phi^\perp[1]$
$\Longleftrightarrow$ $\Hom_{\D(R)}(R/\mathbf{p}[-i],Y[-m-1])=0$, for
all $i\in\mathbb{Z}$ and $\mathbf{p}\in\phi (i)$
$\Longleftrightarrow$ $\Ext_R^{i-m-1}(R/\mathbf{p},Y)=0$, for all
$i\in\mathbb{Z}$ and $\mathbf{p}\in\phi (i)$.
\end{center}
By the nonexistence of negative extensions between modules, putting
$k=i-m$ for $i>m$,  we conclude that $Y\in\T\F_m$ if, and only if,
$\Ext_R^{k-1}(R/\mathbf{p},Y)=0$, for all $k>0$ and
$\mathbf{p}\in\phi (m+k)$.

$2)\Longleftrightarrow 3)$ is a direct consequence of the previous
lemma.
\end{proof}

\begin{exems} \label{exem.TF for largest degrees}
Suppose that the filtration $\phi$ has the property that $\phi
(0)\supsetneq\phi (1)=\phi (2)=\cdots=\emptyset$. It follows from the
previous proposition that $\T\F_0=\T_0$ and
$\T\F_{-1}=\T_{-1}\cap\F_0$.
\end{exems}

Recall that, among other equivalent definitions,  an exact sequence $0\rightarrow L\stackrel{f}{\longrightarrow}M\stackrel{g}{\longrightarrow}N\rightarrow 0$ in $R\Mod$ (even if $R$ is not commutative) is called \emph{pure} when it is kept exact after applying the functor $\Hom_R(X,?):R\Mod\longrightarrow Ab$, for each finitely presented $R$-module $X$. A submodule $M'$ of $M$ is called a \emph{pure submodule} when the associated exact sequence $0\rightarrow M'\hookrightarrow M\longrightarrow M/M'\rightarrow 0$ is pure. Finally, a class $\mathcal{C}\subseteq R\Mod$ is called \emph{definable} when it is closed under taking products, direct limits and pure submodules. 

\begin{cor}\label{cor. TF closed limit}
For each integer $m$, the class $\TF_{m}$ is a definable class and it is closed under taking kernels of epimorphisms in $R\Mod$. In particular, it is also closed under taking coproducts. 
\end{cor}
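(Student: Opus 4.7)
The plan is to leverage the characterization from Proposition \ref{prop. description of stalk}(2), which writes
\[\TF_{m}=\T_{m}\cap\bigcap_{k\geq 1,\,\mathbf{p}\in\phi(m+k)}\{Y\in R\Mod:\Ext^{k-1}_R(R/\mathbf{p},Y)=0\}.\]
Since the intersection of definable classes is definable, it suffices to check definability of each factor. Moreover, closure under coproducts will follow formally once definability is established, because any coproduct is the direct limit of its finite subcoproducts and a finite coproduct in $R\Mod$ is a finite product.

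For $\T_{m}$, closure under submodules (hence pure submodules) is automatic for a hereditary torsion class, closure under direct limits comes from Remark \ref{remark class closed direct.limit}, and closure under products is a standard consequence of $R$ being Noetherian. For each Ext-orthogonality class $\mathcal{X}_{k,\mathbf{p}}:=\{Y:\Ext^{k-1}_R(R/\mathbf{p},Y)=0\}$, the plan is to use that $R/\mathbf{p}$ is of type $\mathrm{FP}_\infty$ over the Noetherian ring $R$ (all syzygies are finitely generated), so $\Ext^{k-1}_R(R/\mathbf{p},-)$ commutes with products and with direct limits in the second variable, giving closure under those operations.

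The main obstacle will be closure of $\mathcal{X}_{k,\mathbf{p}}$ under pure submodules. I would attack this via character-module duality: a pure embedding $Y'\hookrightarrow Y$ dualises to a \emph{split} short exact sequence $0\to(Y/Y')^+\to Y^+\to Y'^+\to 0$ of right $R$-modules, where $(-)^+=\Hom_\mathbb{Z}(-,\mathbb{Q}/\mathbb{Z})$. The natural isomorphism
\[\Ext^{k-1}_R(R/\mathbf{p},Y)^+\cong\mathrm{Tor}_{k-1}^R(Y^+,R/\mathbf{p}),\]
available because $R/\mathbf{p}$ admits a resolution by finitely generated projectives, converts the Ext-vanishing on $Y$ into Tor-vanishing of $Y^+$ against $R/\mathbf{p}$, which then transfers to the direct summand $Y'^+$; the cogenerating property of $\mathbb{Q}/\mathbb{Z}$ recovers $\Ext^{k-1}_R(R/\mathbf{p},Y')=0$.

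For the final claim on kernels of epimorphisms, given a short exact sequence $0\to Y'\to Y\to Y''\to 0$ in $R\Mod$ with $Y,Y''\in\TF_{m}$, the inclusion $Y'\in\T_{m}$ is immediate (closure under submodules), and for each $k\geq 1$ and $\mathbf{p}\in\phi(m+k)$ the long exact sequence
\[\Ext^{k-2}_R(R/\mathbf{p},Y'')\longrightarrow\Ext^{k-1}_R(R/\mathbf{p},Y')\longrightarrow\Ext^{k-1}_R(R/\mathbf{p},Y)\]
will pinch $\Ext^{k-1}_R(R/\mathbf{p},Y')$ between two zeros: the right-hand term vanishes because $Y\in\TF_{m}$, and for $k\geq 2$ the left-hand term vanishes because $\phi$ is decreasing, giving $\mathbf{p}\in\phi(m+k)\subseteq\phi(m+k-1)$, whereupon $Y''\in\TF_{m}$ forces $\Ext^{(k-1)-1}_R(R/\mathbf{p},Y'')=0$. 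Reapplying Proposition \ref{prop. description of stalk}(2) will then yield $Y'\in\TF_{m}$.
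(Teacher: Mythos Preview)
Your proof is correct and follows essentially the same strategy as the paper: both use the characterization of $\TF_m$ from Proposition \ref{prop. description of stalk}, exploit the fact that $R/\mathbf{p}$ has finitely generated syzygies to get commutation of $\Ext_R^k(R/\mathbf{p},-)$ with products and direct limits, and run the same long exact sequence argument for closure under kernels of epimorphisms. The one genuine difference is the pure-submodule step for the Ext-orthogonality classes: you go through character-module duality and the isomorphism $\Ext_R^{k-1}(R/\mathbf{p},Y)^+\cong\mathrm{Tor}_{k-1}^R(Y^+,R/\mathbf{p})$, while the paper argues more directly that for a pure quotient map $p:M\twoheadrightarrow M/M'$ the induced map $\Hom_R(\Omega^{k-1}(R/\mathbf{p}),p)$ is surjective (since the syzygy is finitely presented), hence so is $\Ext_R^k(R/\mathbf{p},p)$, forcing $\Ext_R^k(R/\mathbf{p},M')\hookrightarrow\Ext_R^k(R/\mathbf{p},M)$. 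Both arguments are standard; the paper's is slightly more elementary in that it avoids the Ext--Tor duality isomorphism, while yours makes the definability of each $\mathcal{X}_{k,\mathbf{p}}$ individually transparent. You are also a bit more explicit than the paper in verifying that $\T_m$ itself is definable, which the paper leaves implicit.
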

\begin{proof}

Due to the fact that $R$ is a Noetherian ring,  for each
$\mathbf{p}\in \Spec(R)$, we have that all syzygies of
$R/\mathbf{p}$ are finitely presented (=finitely generated)
$R$-modules. We then  get that the functor
$\Ext^{k}_{R}(R/\mathbf{p},?)$   commutes with products and direct
limits, for each $k\in\mathbb{Z}$. 

Moreover, if $M'\subseteq M$ is a pure submodule and $p:M\twoheadrightarrow M/M'$ is the projection, then 
$\Ext_R^k(R/\mathbf{p},p):\text{Ext}_R^k(R/\mathbf{p},M)\longrightarrow \text{Ext}_R^k(R/\mathbf{p},M/M')$ is an epimorphism since so is the morphism $\Hom_R(\Omega^{k-1}(R/\mathbf{p}),p):\Hom_R(\Omega^{k-1}(R/\mathbf{p}),M)\longrightarrow\Hom_R(\Omega^{k-1}(R/\mathbf{p}),M/M')$. But, using the long exact sequence of homologies, we then get that the exact sequence

$$0\rightarrow\text{Ext}_R^k(R/\mathbf{p},M')\longrightarrow\text{Ext}_R^k(R/\mathbf{p},M)\stackrel{p_*}{\longrightarrow}\text{Ext}_R^k(R/\mathbf{p},M/M')\rightarrow 0$$
is exact, for all $\mathbf{p}\in\Spec (R)$ and all integers $k\geq 0$. 
By Proposition \ref{prop. description of stalk}, we deduce that
$\TF_m$ is a definable class in $R\Mod$. 

Let now $0 \flecha X \flecha Y \flecha Z \flecha 0$ be an exact
sequence in $R\Mod $, where $Y,Z\in\TF_m$. We have that $X\in \T_m$,
because $(\T_{m},\F_{m})$ is a hereditary torsion pair. Let us fix
$k\geq 1$ and $\mathbf{p}\in \phi(m+k)\subseteq \phi(m+k-1).$ If we
apply the long exact sequence of $\Ext(R/\mathbf{p},?)$ to the exact
sequence above, we get:

$$\xymatrix{0 \ar[r] & \Hom_{R}(R/\mathbf{p}, X) \ar[r] & \Hom_{R}(R/\mathbf{p},Y) \ar[r] & \Hom_{R}(R/\mathbf{p},Z) \ar[r] & \cdots \\ \cdots \ar[r] & \Ext^{k-2}_{R}(R/\mathbf{p},Z) \ar[r] & \Ext^{k-1}_{R}(R/\mathbf{p},X) \ar[r] & \Ext^{k-1}_{R}(R/\mathbf{p},Y) \ar[r] & \cdots}$$

From Proposition \ref{prop. description of stalk} we then get that
$\Ext^{k-2}_{R}(R/\mathbf{p},Z)=0=\Ext^{k-1}_{R}(R/\mathbf{p},Y)$,
and hence $\Ext^{k-1}_{R}(R/\mathbf{p},X)=0$. Using again Proposition \ref{prop. description of stalk}, we obtain that $X\in
\TF_{m}.$
\end{proof}

\begin{prop}\label{prop. stalk heart}
Let $m$ be any integer, let $(Y_{\lambda})$ be a direct system in
$\TF_{m}$ and $g:\coprod Y_{\lambda \mu} \flecha \coprod
Y_{\lambda}$ be the colimit-defining morphism. The following
assertions hold:
\begin{enumerate}
\item[1)] $\Ker(g)\in \TF_{m}$;
\item[2)] $\limite{(Y_{\lambda}[-m])}=(\varinjlim{Y_{\lambda}})[-m].$
\end{enumerate}
\end{prop}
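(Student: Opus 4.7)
The plan is to exploit the closure properties of $\TF_m$ established in Corollary \ref{cor. TF closed limit} together with the fact that the obvious embedding $\TF_m\hookrightarrow\Ht$, sending $Y\rightsquigarrow Y[-m]$, is exact on short exact sequences whose three terms lie in $\TF_m$. Along the way I will keep in mind the standard 4-term exact sequence in $R\Mod$ attached to the colimit-defining morphism $g$, namely
$$0\to\Ker(g)\to\coprod Y_{\lambda\mu}\stackrel{g}{\to}\coprod Y_\lambda\to\varinjlim Y_\lambda\to 0.$$

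For assertion 1), I will first note that, by Corollary \ref{cor. TF closed limit}, the class $\TF_m$ is definable (so in particular closed under coproducts and direct limits) and closed under kernels of epimorphisms. Hence $\coprod Y_{\lambda\mu}$, $\coprod Y_\lambda$ and $\varinjlim Y_\lambda$ all lie in $\TF_m$. Factoring $g$ as $\coprod Y_{\lambda\mu}\twoheadrightarrow\Im(g)\hookrightarrow\coprod Y_\lambda$ and applying closure under kernels of epimorphisms to the short exact sequence $0\to\Im(g)\to\coprod Y_\lambda\to\varinjlim Y_\lambda\to 0$ yields $\Im(g)\in\TF_m$. A second application of the same closure property to $0\to\Ker(g)\to\coprod Y_{\lambda\mu}\to\Im(g)\to 0$ gives $\Ker(g)\in\TF_m$, as desired.

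For assertion 2), the key point is to identify the colimit-defining morphism of $(Y_\lambda[-m])$ \emph{inside the heart}. Because the four modules of the 4-term sequence above all belong to $\TF_m$ by part 1), their shifts by $[-m]$ all lie in $\Ht$; in particular, the coproduct of the $Y_\lambda[-m]$ computed in $\mathcal{D}(R)$, namely $(\coprod Y_\lambda)[-m]$, already lives in $\Ht$ and therefore is the coproduct in $\Ht$ (the same holds for $\coprod Y_{\lambda\mu}[-m]$). From the universal property characterizing colimit-defining morphisms in terms of the canonical coproduct injections, it then follows that $g[-m]:\coprod Y_{\lambda\mu}[-m]\to\coprod Y_\lambda[-m]$ is the colimit-defining morphism in $\Ht$.

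Next I will argue that the 4-term sequence obtained by shifting is exact in $\Ht$. Splitting it into two short exact sequences in $R\Mod$ with terms in $\TF_m$, each one yields a triangle in $\mathcal{D}(R)$ whose three vertices lie in $\Ht$, i.e. a short exact sequence in $\Ht$; splicing these back together we get an exact sequence
$$0\to\Ker(g)[-m]\to\coprod Y_{\lambda\mu}[-m]\stackrel{g[-m]}{\longrightarrow}\coprod Y_\lambda[-m]\to(\varinjlim Y_\lambda)[-m]\to 0$$
in $\Ht$. Since $\limite Y_\lambda[-m]$ is by definition the cokernel in $\Ht$ of the colimit-defining morphism, we conclude $\limite(Y_\lambda[-m])=\Coker_{\Ht}(g[-m])=(\varinjlim Y_\lambda)[-m]$.

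The routine-but-subtle step I expect to dwell on is the identification of the coproduct in $\Ht$ with the shift of the coproduct in $R\Mod$, since in general coproducts in $\Ht$ have to be computed via the left adjoint $L:\mathcal{U}_\phi\to\Ht$; here it is crucial that part 1) places the relevant coproducts already inside the heart, so that $L$ acts as the identity on them. Once this is in place, the rest is formal manipulation of cokernels.
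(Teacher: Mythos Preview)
Your proof is correct and follows essentially the same strategy as the paper's. Part 1) is identical. For part 2), the paper computes the cone of $g[-m]$, observes that its canonical truncation triangle is $\Ker(g)[-m+1]\to\text{Cone}(g[-m])\to(\varinjlim Y_\lambda)[-m]\stackrel{+}{\to}$, and then invokes the BBD description of cokernels in the heart to conclude; you instead split the $4$-term sequence into two short exact sequences of objects in $\TF_m$, shift, and splice the resulting short exact sequences in $\Ht$. These are two packagings of the same idea: in both cases the crucial input is that $\Ker(g)[-m]$ and $(\varinjlim Y_\lambda)[-m]$ lie in $\Ht$, which forces the relevant triangle to be the heart's kernel/cokernel decomposition of $g[-m]$. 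Your explicit remark that the coproducts computed in $\mathcal{D}(R)$ already lie in $\Ht$ (so that $L$ is the identity on them) is a point the paper leaves implicit.
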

\begin{proof}
Since  $\coprod Y_{\lambda \mu}, \ \coprod Y_{\lambda}$ and
$\varinjlim{Y_{\lambda}}$ are in $\TF_{m}$ it follows from Corollary
\ref{cor. TF closed limit} that $\text{Im}(g)$ and $\Ker(g)$ are
also in $\TF_m$.  We then consider the following diagram

$$\xymatrix{&&&\Ker(g)[-m+1]=H^{m-1}(\text{Cone}(g[-m]))[-m+1]\ar[d] \\ \coprod Y_{\lambda \mu}[-m] \ar[rr]^{g[-m]} & &\coprod Y_{\lambda}[-m] \ar[r] & \text{Cone}(g[-m]) \ar[d] \ar[r]^{\hspace{0.1 cm}+} & \\ &&& \varinjlim{Y_{\lambda}}[-m]=H^{m}(\text{Cone}(g[-m]))[-m] \ar[d]^{+} & \\ &&&&}$$

By the explicit construction of cokernels in $\Ht$ (see \cite{BBD}),
we deduce that $\limite{(Y_{\lambda}[-m])}=\Coker_{\Ht}(g[-m])\cong
(\varinjlim{Y_{\lambda}})[-m].$
\end{proof}

\subsection{The heart has a generator}

In this subsection we will prove that $\Ht$ always has a generator. 

\begin{notation}
For each $k\in \mathbb{Z}$, we will denoted by $\phi_{\leq k}$ the sp-filtration given by:
\begin{enumerate}
\item[1)] $\phi_{\leq k}(j):=\phi(j)$, \ \ if \ \ $j\leq k$;
\item[2)] $\phi_{\leq k}(j):=\emptyset$, \ \ \ \ \ if  \ \ $j>k.$
\end{enumerate}
\end{notation}

\begin{rem}\label{rem. truncation of phi}
For each $X\in \mathcal{H}_{\phi}$ and $k\in \mathbb{Z}$, we have that $\tau^{\leq k}(X)\in \mathcal{H}_{\phi_{\leq k+j}}$ for each $j\in \{0,1,2\}$. Indeed, it is clear that $\tau^{\leq k} (X)\in \mathcal{U}_{\phi_{\leq k+j}}$, for each $j\geq 0$. On the other hand, we consider the following triangle:
$$\xymatrix{\tau^{>k}(X)[-2] \ar[r] & \tau^{\leq k}(X)[-1] \ar[r] & X[-1] \ar[r]^{\hspace{0.3 cm}+} & }$$
Applying the cohomological functor $\Hom_{\D(R)}(R/\mathbf{p}[-m],?)$ to the previous triangle, we get that \linebreak $\Hom_{\D(R)}(R/\mathbf{p}[-m],\tau^{\leq k}(X)[-1])=0$, for each $\mathbf{p}\in \phi(m)$ and $m\leq k+2$. This shows that $\tau^{\leq k}(X)\in \mathcal{U}_{\phi_{\leq k+j}}^{\perp}[1]$ for each $j\in \{0,1,2\}$.
 \end{rem}

Let us consider now the left adjoint $L$ of the inclusion functor  $j:\Ht\monic \mathcal{U}_\phi$ (see Subsection \ref{subsection of $L$}). Due to the fully faithful condition of this latter functor, we know that the counit $L\circ j\longrightarrow 1_{\Ht}$ is a natural isomorphism.

\begin{lemma} \label{lem.from triangles to exact sequences}
Let $\xymatrix{U' \ar[r]^{f} & U\ar[r]^{g} &  Y \ar[r]^{+} & }$ be a triangle in $\D(R)$, 
where $U',U\in\mathcal{U}_\phi$ and $Y\in\Ht$. Then $\xymatrix{0 \ar[r] & L(U')\ar[r]^{L(f)} & L(U) \ar[r]^{L(g)} & Y \ar[r] & 0}$ is an exact sequence in $\Ht$. 
\end{lemma}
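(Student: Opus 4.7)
The plan is to apply the cohomological functor $\tilde H:\mathcal D(R)\longrightarrow\Ht$ to the given triangle and extract the desired short exact sequence from the induced long exact sequence in $\Ht$.

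Concretely, since $\tilde H$ is cohomological and, by definition, its restriction to $\U$ coincides with $L$, applying $\tilde H$ to the triangle $U'\stackrel{f}{\longrightarrow}U\stackrel{g}{\longrightarrow}Y\stackrel{+}{\longrightarrow}$ produces a long exact sequence in $\Ht$ of the form
$$\cdots\longrightarrow\tilde H(Y[-1])\longrightarrow L(U')\stackrel{L(f)}{\longrightarrow}L(U)\stackrel{L(g)}{\longrightarrow}\tilde H(Y)\longrightarrow\tilde H(U'[1])\longrightarrow\cdots$$
The task therefore reduces to verifying that $\tilde H(Y[-1])=0=\tilde H(U'[1])$ and that $\tilde H(Y)=Y$.

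Each identification is immediate from the characterization of $\U$, $\U^\perp[1]$ and $\Ht$ as the subcategories of objects concentrated in the appropriate t-cohomological degrees with respect to $(\U,\U^\perp[1])$. Since $Y\in\Ht$ is concentrated in t-degree $0$, the shift $Y[-1]$ is concentrated in t-degree $1$ and $\tilde H(Y[-1])=0$; likewise $\tilde H(Y)=Y$ (equivalently, the counit $L\circ j\longrightarrow 1_{\Ht}$ is a natural isomorphism). Since $U'\in\U$ is concentrated in t-degrees $\leq 0$, the shift $U'[1]$ is concentrated in t-degrees $\leq -1$, forcing $\tilde H(U'[1])=0$. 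Substituting these vanishings into the long exact sequence yields precisely
$$0\longrightarrow L(U')\stackrel{L(f)}{\longrightarrow}L(U)\stackrel{L(g)}{\longrightarrow}Y\longrightarrow 0,$$
as required.

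I do not expect any significant obstacle: the whole proof reduces to the cohomological property of $\tilde H$ together with the elementary concentration description of the aisle, co-aisle and heart. The only point demanding mild care is to keep the shift convention consistent, so that the vanishings $\tilde H(Y[-1])=0$ and $\tilde H(U'[1])=0$ are unambiguously derived from $Y\in\U^\perp[1]$ and $U'\in\U$ respectively.
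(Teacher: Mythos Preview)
Your proof is correct and takes a genuinely different route from the paper. The paper argues by an explicit application of the octahedral axiom: it completes $\tau_\phi^{\leq}(U[-1])[1]\to U\to jL(U)$ and $U'\to U\to Y$ to an octahedron, producing a triangle $M\to jL(U)\to j(Y)\stackrel{+}{\to}$, and then identifies $M\cong jL(U')$ by checking that $M\in\mathcal U_\phi^\perp[1]$ and reading off the truncation triangle for $U'$. Your argument instead packages all of this into the single fact that $\tilde H$ is cohomological, together with the immediate vanishings $\tilde H(Y[-1])=0$ (since $Y[-1]\in\mathcal U_\phi^\perp$) and $\tilde H(U'[1])=0$ (since $U'[1]\in\mathcal U_\phi[1]$). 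This is shorter and conceptually cleaner; the paper's version has the mild advantage of exhibiting the triangle $L(U')\to L(U)\to Y\stackrel{+}{\to}$ in $\mathcal D(R)$ explicitly, but for the stated lemma your approach suffices and is more transparent.
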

\begin{proof}
The octahedral axiom and the definition of $L$ give a commutative diagram in $\D(R)$

$$\xymatrix{&&&& \\ &&& M \ar@{-->}[ur]^{+} \ar[dd] & \\ & U^{'} \ar@{-->}[rru] \ar[rd] &&& \\ \tau_{\phi}^{\leq}(U[-1])[1] \ar@{-->}[ur] \ar[rr]& & U \ar[r] \ar[dr] & jL(U) \ar[r]^{+} \ar[d] &  \\ &&& j(Y) \ar[d]^{+} \ar[dr]^{+} & \\ &&&&}$$

From the right vertical triangle we deduce that $M\in\mathcal{U}_\phi^\perp[1]$. This implies that $M\cong\tau_{\phi}^{>}(U'[-1])[1]$, by using the dotted triangle. By definition of $L$, we then get that $M\in\Ht$ and $M\cong jL(U')$. Then the right vertical triangle has its three vertices in $\Ht$, which ends the proof.
\end{proof}

In the rest of the paper, we shall denote by $\mathcal C(R)$  the category of chain complexes of $R$-modules and by  $\mathcal{K}(R)$ the associated homotopy category.

\begin{prop}\label{generator of H}
Let $\mathcal{U}_{\phi}^{fg}$ be the class of all the complexes quasi-isomorphic to a complex $X$ in $\mathcal{U}_{\phi}$, such that $X^{i}$ is a finitely generated $R$-module for each $i\in \mathbb{Z}$. The isoclasses of objects $L(X)$ such that $X \in \mathcal{U}_{\phi}^{fg}$ form a set of generators of $\Ht$.
\end{prop}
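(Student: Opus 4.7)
The plan is to exhibit, for each $M\in\mathcal{H}_\phi$, an epimorphism $\bigoplus_\alpha L(X_\alpha)\twoheadrightarrow M$ in $\mathcal{H}_\phi$ with $X_\alpha\in\mathcal{U}_\phi^{fg}$. The adjunction $\Hom_{\mathcal{H}_\phi}(L(X),M)\cong\Hom_{\mathcal{D}(R)}(X,M)$ (valid for $X\in\mathcal{U}_\phi$, since $j:\mathcal{H}_\phi\hookrightarrow\mathcal{U}_\phi$ is fully faithful) and the fact that $L$ preserves coproducts as a left adjoint together reduce the problem to producing a morphism $g:T\to M$ in $\mathcal{D}(R)$ where $T=\coprod_\alpha X_\alpha$ is a coproduct of $\mathcal{U}_\phi^{fg}$-objects, and then verifying that the induced $L(g):L(T)\cong\bigoplus_\alpha L(X_\alpha)\to M$ is epimorphic in $\mathcal{H}_\phi$. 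By Lemma \ref{lem.from triangles to exact sequences}, this latter condition holds as soon as the fiber $V:=\mathrm{fib}(g)$ lies in $\mathcal{U}_\phi$, for then $0\to L(V)\to L(T)\to M\to 0$ is exact in $\mathcal{H}_\phi$.

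The natural universal candidate is $T=\coprod_{(X,f)\in I}X$, where $I$ is an iso-class set of pairs $(X,f)$ with $X\in\mathcal{U}_\phi^{fg}$ and $f\in\Hom_{\mathcal{D}(R)}(X,M)$, and $g:T\to M$ is the universally induced morphism; closure of $\mathcal{U}_\phi$ under coproducts gives $T\in\mathcal{U}_\phi$. To check that $V\in\mathcal{U}_\phi$, I will use the description $\mathcal{U}_\phi=\{U:\Supp(H^j(U))\subseteq\phi(j)\text{ for all }j\}$ and the long exact cohomology sequence of the triangle $V\to T\to M\to V[1]$:
\[
H^{j-1}(T)\xrightarrow{H^{j-1}(g)}H^{j-1}(M)\to H^j(V)\to H^j(T)\to H^j(M).
\]
Since $T\in\mathcal{U}_\phi$ ensures $H^j(T)\in\mathcal{T}_{\phi(j)}$ automatically, it suffices to show that $H^{j-1}(g)$ is surjective for every $j\in\mathbb{Z}$: then $H^j(V)$ embeds in $H^j(T)$ and inherits the required support condition, giving $V\in\mathcal{U}_\phi$.

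The surjectivity of $H^{j-1}(g)$ is where the universality of the coproduct is exploited. Given any class $y\in H^{j-1}(M)$, the cyclic submodule $Ry$ has support in $\phi(j-1)$ (because $M\in\mathcal{U}_\phi$), so the stalk complex $Ry[-j+1]$ lies in $\mathcal{U}_\phi^{fg}$; any morphism $Ry[-j+1]\to M$ in $\mathcal{D}(R)$ that realizes the inclusion $Ry\hookrightarrow H^{j-1}(M)$ on $(j-1)$-cohomology then appears as a summand in $T$ and contributes $y$ to the image of $H^{j-1}(g)$. The principal technical obstacle is the construction of such a lifting. My approach is to analyze the subcomplex $\Gamma_{\phi(j-1)}(M^\bullet)\hookrightarrow M^\bullet$ (which is closed under the differential and hence an honest subcomplex), using that its cohomology surjects onto $H^{j-1}(M)$—to be derived from the defining condition $M\in\mathcal{U}_\phi^\perp[1]$, via the Ext-vanishing of Proposition \ref{prop. description of stalk} applied to the quotient complex $M^\bullet/\Gamma_{\phi(j-1)}(M^\bullet)$—so that every class $y\in H^{j-1}(M)$ has a cocycle representative $z'\in Z^{j-1}(M)\cap\Gamma_{\phi(j-1)}(M^{j-1})$. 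The subcomplex inclusion $Rz'[-j+1]\hookrightarrow M$ is then a chain map (hence a morphism in $\mathcal{D}(R)$) and supplies the desired lifting.
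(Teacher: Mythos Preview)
Your overall strategy matches the paper's: produce a morphism $g:T\to M$ with $T$ a coproduct of objects in $\mathcal{U}_\phi^{fg}$, verify that the cone (equivalently, fiber) lies in $\mathcal{U}_\phi$, and then invoke Lemma~\ref{lem.from triangles to exact sequences}. The reduction to surjectivity of $H^{j-1}(g)$ is also correct and is exactly how the paper proceeds.

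The gap is in the lifting step. You want, for each $y\in H^{j-1}(M)$, a morphism $Ry[-j+1]\to M$ in $\mathcal{D}(R)$ inducing the inclusion $Ry\hookrightarrow H^{j-1}(M)$. In general the map $\Hom_{\mathcal{D}(R)}(N[-j+1],M)\to\Hom_R(N,H^{j-1}(M))$ is \emph{not} surjective; the obstruction lives in $\Hom_{\mathcal{D}(R)}(N[-j+1],\tau^{\leq j-2}M[1])$, which need not vanish. Your proposed workaround---find a cocycle representative of $y$ inside $\Gamma_{\phi(j-1)}(M^{j-1})$---depends on the chosen complex model of $M$ and can fail. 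For instance, take $R=\mathbb{Z}$, $\phi(i)=\Spec(\mathbb{Z})$ for $i\leq j-2$, $\phi(j-1)=V(p)$, $\phi(i)=\emptyset$ for $i\geq j$, and the model $M=(\mathbb{Z}\xrightarrow{\,p\,}\mathbb{Z})$ in degrees $j-2,j-1$. Then $M\in\mathcal{H}_\phi$, $H^{j-1}(M)=\mathbb{Z}/p$, but $\Gamma_{V(p)}(M^{j-1})=0$, so no nonzero cocycle in degree $j-1$ is $p$-torsion. Your appeal to Proposition~\ref{prop. description of stalk} does not help: that result concerns \emph{modules} $Y$ with $Y[-m]\in\mathcal{H}_\phi$, not the quotient complex $M/\Gamma_{\phi(j-1)}(M)$, and the condition $M\in\mathcal{U}_\phi^\perp[1]$ says nothing about this particular chain-level quotient.

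The paper sidesteps the lifting problem entirely by abandoning stalk complexes. Working with a fixed complex model $X$ of $M$, it builds, for each $z\in X^k$, a bounded-above finitely generated \emph{subcomplex} $Y\subseteq X$ (with $Y^k=Rz$, $Y^{k+1}=Rd^k(z)$, and $Y^{k-i}$ chosen recursively via the Noetherian hypothesis so that $d^{k-i}(Y^{k-i})=\text{Im}(d^{k-i})\cap Y^{k-i+1}$). By construction $H^j(Y)\hookrightarrow H^j(X)$ for all $j$, so $Y\in\mathcal{U}_\phi^{fg}$, and the inclusion $Y\hookrightarrow X$ is an honest chain map---no derived-category lifting is needed. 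The union of these subcomplexes covers $X$ degreewise, which immediately gives surjectivity on each $H^k$. This chain-level construction of non-stalk subcomplexes is the idea your argument is missing.
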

\begin{proof}
It is clear that, up to isomorphism in $\Ht$, the given $L(X)$ form a set.
Let $X=(X,d)\in \mathcal{H}_{\phi}$ be any object. Our goal is to prove that it is an epimorphic image in $\Ht$ of a (set-indexed) coproduct of objects of the form $L(Y)$, with $Y\in\mathcal{U}_{\phi}^{fg}$.
In order to do that, we first consider the set $\Delta$  of all subcomplexes $Y$ of $X$ (in the abelian category
 $\C(R)$) such that $Y^k$ is finitely generated, for all $k\in\mathbb{Z}$, that  $Y\in\mathcal{U}_\phi$, when we view $Y$ as an object of $\D(R)$, and that the inclusion $\iota_Y:Y\monic X$ induces a monomorphism $H^k(Y)\monic H^k(X)$, for each $k\in\mathbb{Z}$.

We claim that $\underset{Y\in \Delta}{\cup}Y^{k}=X^{k},$ for each $k\in \mathbb{Z}$. Let us fix an integer $k$ and let $z\in X^{k}$. Put $Y^{k}:=Rz$ and $Y^{k+1}:=Rd^{k}(z)$.  We get that $Im(d^{k-1}) \cap Y^{k}$ is a finitely generated $R$-module, because $R$ is a Noetherian ring. Hence there exists a finitely generated submodule  $Y^{k-1}$ of $X^{k-1}$ such that $d^{k-1}(Y^{k-1})=Im(d^{k-1})\cap Y^{k}$. By using this argument in a recursive way, we obtain a subcomplex $Y$ of $X$ concentrated in degrees $\leq k+1$,  which has zero homology in degrees $>k$ and  the homology in degrees $\leq k$ is given by:

$$H^j(Y)=\frac{\Ker(d^{j}) \cap Y^{j}}{d^{j-1}(Y^{j-1})}=\frac{\Ker(d^{j}) \cap Y^{j}}{\Imagen(d^{j-1}) \cap Y^{j}}=\frac{\Ker(d^{j}) \cap Y^{j}}{\Imagen(d^{j-1}) \cap \Ker(d^{j}) \cap Y^{j}} \cong \frac{(\Ker(d^{j}) \cap Y^{j})+\Imagen(d^{j-1})}{\Im(d^{j-1})} \subseteq \frac{\Ker(d^{j})}{\Imagen(d^{j-1})}=H^j(X).$$
It follows that $Y \in \Delta$ since  $(\mathcal{T}_{j},\mathcal{F}_{j})$ is a hereditary torsion pair, for each integer $j$. Our claim follows due to the fact that $z\in Y^{k}.$ \\

Consider now the epimorphism $p:\underset{Y\in\Delta}{\coprod} Y\longrightarrow X$ in the abelian category $\C(R)$ whose components are the inclusions $\iota_Y:Y\monic X$ and the corresponging exact sequence $0\rightarrow N\longrightarrow\underset{Y\in\Delta}{\coprod} Y\stackrel{p}{\longrightarrow} X\rightarrow 0$ in this category. From the construction given in the previous paragraph we easily deduce that the induced map $H^k(p):H^k( \underset{Y\in\Delta}{\coprod}Y)\cong \underset{Y\in\Delta}{\coprod}H^k(Y)\longrightarrow H^k(Z)$ is surjective, for all $k\in\mathbb{Z}$. When we consider the associated triangle $\xymatrix{N \ar[r] &  \underset{Y\in\Delta}{\coprod}Y \ar[r]^{\hspace{0.3 cm}p} & X \ar[r]^{+} & }$ in $\D(R)$ and take the long exact sequence of homologies, we get that the sequence
$$\xymatrix{0\ar[r] & H^k(N) \ar[r] & \underset{Y\in\Delta}{\coprod}H^k(Y)\ar[r]^{\hspace{0.4 cm}H^k(p)}& H^k(X) \ar[r] & 0 }$$ is exact, for each $k\in\mathbb{Z}$. It follows that $H^k(N)\in\T_k$, for each $k\in\mathbb{Z}$, so that $N$ is in $\mathcal{U}_\phi$. By applying Lemma \ref{lem.from triangles to exact sequences} and using the fact that a left adjoint functor preserves coproducts, we get a short exact sequence in $\Ht$  
$$\xymatrix{0 \ar[r] & L(N) \ar[r] &  \underset{Y\in\Delta}{\coprod}L(Y) \ar[r] & X\ar[r] & 0.}$$ This ends the proof since $\Delta\subset\mathcal{U}_{\phi}^{fg}$.
\end{proof}

\subsection{The AB5 and modular conditions via localization at prime ideals}

Let $\mathbf{p}$ be a prime ideal of $R$ and let $f:R \flecha R_{\mathbf{p}}$ the canonical ring homomorphism. The exactness of both the extension and restriction of scalars functors with respect to this morphism, gives   triangulated functors $f_*=\mathbf{R}f_*:\D(R_\mathbf{p})\flecha \D(R)$ and  $f^*=\text{L}f^{*}:D(R) \flecha D(R_{\mathbf{p}})$ such that $(f^*,f_*)$ is an adjoint pair. Moreover $f_*$  is fully faithful since  $R_{\mathbf{p}}$ is a flat module, and hence $R\flecha R_\mathbf{p}$ is a homological epimorphism (see \cite[Section 4]{GL} and  \cite[Section 4]{NS}). In particular, the counit $f^*\circ f_*\flecha 1_{\D(R_\mathbf{p})}$ is a natural isomorphism.\\

All throughout the paper, whenever necessary, we view $\Spec(R_{\mathbf{p}})$ as the subset of $\Spec(R)$ consisting of the $\mathbf{q}\in \Spec(R)$ such that $\mathbf{q}\subseteq \mathbf{p}$. The following result shows that hearts of t-structures behave rather well with respect to localization at primes.

\begin{prop} \label{prop.localization of hearts}
Let $\phi$ be an sp-filtration of $\Spec(R)$, let $\mathbf{p}$ be a prime ideal of $R$ and let  $\phi_{\mathbf{p}}$ denote the sp-filtration of $\Spec(R_{\mathbf{p}})$, defined by $\phi_{\mathbf{p}}(i):=\phi(i)\cap \Spec(R_{\mathbf{p}})$, for each $i\in \mathbb{Z}$. Let $\Ht$ and $\mathcal{H}_{\phi_\mathbf{p}}$ denote the hearts of the associated t-structures in $\D(R)$ and $\D(R_\mathbf{p})$, respectively. The functors $f^*$ and $f_*$ induce by restriction an adjoint pair of exact functors $(f^*:\Ht \flecha \mathcal{H}_{\phi_\mathbf{p}},f_*:\mathcal{H}_{\phi_\mathbf{p}}\flecha\Ht )$ whose counit is a natural isomorphism (equivalently, $f_*$ is fully faithful).
\end{prop}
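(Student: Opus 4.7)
My plan is to verify directly that $f^*$ and $f_*$ restrict to functors between the two hearts; once this containment is established, the adjunction on the hearts, together with its exactness and the fact that the counit is an isomorphism, will all follow formally from the ambient derived-level adjunction and the fully faithfulness of $f_*:\mathcal{D}(R_\mathbf{p})\to\mathcal{D}(R)$ recalled in the paragraph preceding the proposition.

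The key step I would tackle first is to show $f^*(\mathcal{U}_\phi)\subseteq\mathcal{U}_{\phi_\mathbf{p}}$ and $f_*(\mathcal{U}_{\phi_\mathbf{p}})\subseteq\mathcal{U}_\phi$. Both functors are triangulated and preserve arbitrary coproducts ($f^*=R_\mathbf{p}\otimes_R(?)$ is a left adjoint, while $f_*$ is the underlying-complex restriction-of-scalars functor), so the preimage under either functor of an aisle is closed under extensions, positive shifts and coproducts. Hence it is enough to verify each containment on the compact generators provided by Proposition \ref{prop.sp-filtrations versus t-structures}. For $f^*$: if $\mathbf{q}\in\phi(i)$ and $\mathbf{q}\not\subseteq\mathbf{p}$, any $s\in\mathbf{q}\setminus\mathbf{p}$ becomes a unit in $R_\mathbf{p}$, so $f^*(R/\mathbf{q}[-i])\cong(R_\mathbf{p}/\mathbf{q}R_\mathbf{p})[-i]=0$, whereas if $\mathbf{q}\subseteq\mathbf{p}$ then $\mathbf{q}\in\phi(i)\cap\Spec(R_\mathbf{p})=\phi_\mathbf{p}(i)$ and $f^*(R/\mathbf{q}[-i])=R_\mathbf{p}/\mathbf{q}R_\mathbf{p}[-i]$ is precisely a compact generator of $\mathcal{U}_{\phi_\mathbf{p}}$. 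For $f_*$: given $\mathbf{q}\in\phi_\mathbf{p}(i)\subseteq\phi(i)$, I would write $R_\mathbf{p}/\mathbf{q}R_\mathbf{p}\cong(R/\mathbf{q})_\mathbf{p}$ as $R$-modules and use that $R/\mathbf{q}$ is a domain to check $\Supp_R((R/\mathbf{q})_\mathbf{p})=V(\mathbf{q})$ (the inclusion $\subseteq$ is the annihilator bound, while for $\mathbf{r}\supseteq\mathbf{q}$ one sees directly that $((R/\mathbf{q})_\mathbf{p})_\mathbf{r}=(R/\mathbf{q})_\mathbf{p}\otimes_{R/\mathbf{q}}(R/\mathbf{q})_\mathbf{r}$ is a nonzero subring of the fraction field of $R/\mathbf{q}$). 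Since $V(\mathbf{q})\subseteq\phi(i)$ by sp-closure, the support description of $\mathcal{U}_\phi$ gives $f_*(R_\mathbf{p}/\mathbf{q}R_\mathbf{p}[-i])\in\mathcal{U}_\phi$.

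The corresponding containments for the co-aisles are then immediate from adjunction: for $Y\in\mathcal{U}_\phi^\perp$ and any $U\in\mathcal{U}_{\phi_\mathbf{p}}$, $\Hom_{\mathcal{D}(R_\mathbf{p})}(U,f^*Y)=\Hom_{\mathcal{D}(R)}(f_*U,Y)=0$ since $f_*U\in\mathcal{U}_\phi$, and symmetrically for $f_*$. Intersecting with $\mathcal{U}[1]$ on the correct side yields the desired restrictions $f^*:\Ht\to\mathcal{H}_{\phi_\mathbf{p}}$ and $f_*:\mathcal{H}_{\phi_\mathbf{p}}\to\Ht$. To finish, I would appeal to three formal facts: (i)~the hom-bijection of the ambient adjoint pair restricts to a hom-bijection between the hearts since they are full subcategories, yielding the restricted adjunction; (ii)~both restrictions are exact in the abelian sense because short exact sequences in a heart are precisely the triangles in the ambient $\mathcal{D}$ with all three vertices in the heart, and triangulated functors preserve triangles; (iii)~the counit of the restricted adjunction is the restriction of the derived-level counit $f^*f_*\to 1_{\mathcal{D}(R_\mathbf{p})}$, which is already a natural isomorphism by the recalled fully faithfulness of $f_*$. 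The main (and only) concrete calculation in the whole argument is the elementary support identity $\Supp_R((R/\mathbf{q})_\mathbf{p})=V(\mathbf{q})$; I do not anticipate any other obstacle.
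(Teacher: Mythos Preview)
Your overall strategy is sound and more self-contained than the paper's proof, which simply cites \cite[Proposition 2.9]{AJS} for the inclusions $f^*(\Ht)\subseteq\mathcal{H}_{\phi_\mathbf{p}}$ and $f_*(\mathcal{H}_{\phi_\mathbf{p}})\subseteq\Ht$ and then observes exactness as you do. Your verification that $f^*$ and $f_*$ carry aisle to aisle is correct, and so is the formal deduction of the restricted adjunction, exactness, and counit isomorphism at the end.

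There is, however, a genuine slip in the co-aisle step. The displayed identity
\[
\Hom_{\mathcal{D}(R_\mathbf{p})}(U,f^*Y)=\Hom_{\mathcal{D}(R)}(f_*U,Y)
\]
would assert that $(f_*,f^*)$ is an adjoint pair, which is false in general: the right adjoint of $f_*$ is $\mathbf{R}\Hom_R(R_\mathbf{p},-)$, not $R_\mathbf{p}\otimes_R(-)$. Your ``symmetric'' case is actually the one that works directly: for $Z\in\mathcal{U}_{\phi_\mathbf{p}}^\perp$ and $U\in\mathcal{U}_\phi$ one has $\Hom_{\mathcal{D}(R)}(U,f_*Z)=\Hom_{\mathcal{D}(R_\mathbf{p})}(f^*U,Z)=0$ by the genuine $(f^*,f_*)$-adjunction together with $f^*(\mathcal{U}_\phi)\subseteq\mathcal{U}_{\phi_\mathbf{p}}$. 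For the direction $f^*(\mathcal{U}_\phi^\perp)\subseteq\mathcal{U}_{\phi_\mathbf{p}}^\perp$ you need a different argument.

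One clean fix: it suffices to test against the compact generators $f^*(R/\mathbf{q})[-i]$ of $\mathcal{U}_{\phi_\mathbf{p}}$, with $\mathbf{q}\in\phi_\mathbf{p}(i)$. Taking a resolution of $R/\mathbf{q}$ by finitely generated free $R$-modules and using flatness of $R_\mathbf{p}$, one gets the flat-base-change isomorphism
\[
\Hom_{\mathcal{D}(R_\mathbf{p})}\bigl(f^*(R/\mathbf{q})[-i+n],f^*Y\bigr)\;\cong\;R_\mathbf{p}\otimes_R\Hom_{\mathcal{D}(R)}\bigl(R/\mathbf{q}[-i+n],Y\bigr),
\]
and the right-hand side vanishes for all $n\geq 0$ when $Y\in\mathcal{U}_\phi^\perp$ and $\mathbf{q}\in\phi(i)$. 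With this correction in place, the remainder of your argument goes through unchanged.
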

\begin{proof}
By  \cite[Proposition 2.9]{AJS}, we have that $f^*(\Ht )\subseteq\mathcal{H}_{\phi_\mathbf{p}}$ and $f_*(\mathcal{H}_{\phi_\mathbf{p}})\subseteq\Ht$. The proposition follows immediately from this since a short exact sequence in the heart of a t-structure is the same as a triangle in the ambient triangulated category which has its three vertices in the heart. 
\end{proof}

\begin{cor}\label{cor. Hp is AB5}
The following assertions are equivalent:
\begin{enumerate}
\item $\Ht$ is an AB5 abelian category;
\item $\mathcal{H}_{\phi_{\mathbf{p}}}$ is an AB5 abelian category, for all $\mathbf{p}\in \Spec(R)$.
\end{enumerate}
\end{cor}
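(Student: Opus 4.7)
The plan is to pass between the two hearts via the adjunction $(f^{*},f_{*})$ supplied by Proposition \ref{prop.localization of hearts}. Recall that $f^{*}$ and $f_{*}$ are both exact on the hearts, that $f^{*}\circ f_{*}\cong 1_{\mathcal{H}_{\phi_{\mathbf{p}}}}$, and that $f^{*}$, being a left adjoint, preserves every colimit that exists in $\mathcal{H}_\phi$. These three facts will carry the whole argument.

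For $(1)\Rightarrow (2)$, fix $\mathbf{p}\in\Spec(R)$ and a directed system of short exact sequences $0\to A_\lambda\to B_\lambda\to C_\lambda\to 0$ in $\mathcal{H}_{\phi_{\mathbf{p}}}$. Apply $f_{*}$ to transport it to a directed system of short exact sequences in $\Ht$; by hypothesis, the induced sequence of direct limits in $\Ht$ is short exact. Now apply the exact functor $f^{*}$ and use that $f^{*}$ commutes with direct limits together with $f^{*}\circ f_{*}\cong 1$ to conclude that $0\to\varinjlim_{\mathcal{H}_{\phi_{\mathbf{p}}}}A_\lambda\to\varinjlim_{\mathcal{H}_{\phi_{\mathbf{p}}}}B_\lambda\to\varinjlim_{\mathcal{H}_{\phi_{\mathbf{p}}}}C_\lambda\to 0$ is exact.

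For $(2)\Rightarrow(1)$, take a directed system of short exact sequences $0\to A_\lambda\to B_\lambda\to C_\lambda\to 0$ in $\Ht$. Since direct limits are right exact in any cocomplete abelian category, it is enough to show that $u:\varinjlim_{\Ht}A_\lambda\to\varinjlim_{\Ht}B_\lambda$ is a monomorphism. Let $K:=\Ker_{\Ht}(u)$. For each $\mathbf{p}\in\Spec(R)$, the exactness of $f^{*}$ and the fact that $f^{*}$ preserves direct limits identify $f^{*}(K)$ with the kernel in $\mathcal{H}_{\phi_{\mathbf{p}}}$ of the map $\varinjlim_{\mathcal{H}_{\phi_{\mathbf{p}}}}f^{*}(A_\lambda)\to\varinjlim_{\mathcal{H}_{\phi_{\mathbf{p}}}}f^{*}(B_\lambda)$. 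But $0\to f^{*}(A_\lambda)\to f^{*}(B_\lambda)\to f^{*}(C_\lambda)\to 0$ is short exact in $\mathcal{H}_{\phi_{\mathbf{p}}}$, and the AB5 assumption there forces this induced map of colimits to be monic. Hence $f^{*}(K)=0$ in $\mathcal{D}(R_{\mathbf{p}})$ for every $\mathbf{p}$.

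It remains to deduce $K=0$ in $\mathcal{D}(R)$ from $f^{*}(K)=0$ for all $\mathbf{p}$. Because $R_{\mathbf{p}}$ is $R$-flat, $f^{*}$ is an exact functor of abelian categories on complexes, so $H^{i}(f^{*}(K))\cong H^{i}(K)_{\mathbf{p}}$ for every $i$; therefore $H^{i}(K)_{\mathbf{p}}=0$ for all $\mathbf{p}\in\Spec(R)$ and all $i\in\mathbb{Z}$, and the standard local-global principle for $R$-modules yields $H^{i}(K)=0$ for every $i$, i.e., $K=0$ in $\mathcal{D}(R)$ and hence in $\Ht$. The main conceptual point throughout, rather than any genuine obstacle, is verifying that the colimit $K$, formed \emph{inside} the heart and not inside $\mathcal{D}(R)$, is nevertheless correctly detected locally at primes; this is precisely what the formal properties of the adjunction $(f^{*},f_{*})$ between the hearts provide, and is why no explicit computation of $\varinjlim_{\Ht}$ as a complex is needed.
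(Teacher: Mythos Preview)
Your proof is correct and follows essentially the same approach as the paper's own argument: both directions use the exact adjoint pair $(f^{*},f_{*})$ from Proposition~\ref{prop.localization of hearts}, with $f^{*}$ preserving direct limits and the counit being an isomorphism, and the implication $(2)\Rightarrow(1)$ is completed in both cases by the local-global principle that a complex $K\in\mathcal{D}(R)$ with $K_{\mathbf{p}}=0$ for all $\mathbf{p}$ is acyclic.
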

\begin{proof}

1) $\Longrightarrow$ 2) Let us fix a $\mathbf{p}\in \Spec(R)$ and let us consider a direct system
$$\xymatrix{0 \ar[r] & X_{\lambda} \ar[r] & Y_{\lambda} \ar[r] & Z_{\lambda} \ar[r] & 0}$$
of short exact sequences in $\mathcal{H}_{\phi_{\mathbf{p}}}$. By applying $f_*$ and taking direct 
limits in $\Ht$, due to the AB5 condition of this latter category,  we get an exact sequence in $\Ht$
\begin{center}
$\xymatrix{0\ar[r] & \limite f_*(X_\lambda ) \ar[r]&  \limite f_*(Y_\lambda) \ar[r] & \limite f_*(Z_\lambda ) \ar[r] & 0}. $
\end{center}
But the functor $f^*:\Ht \flecha \mathcal{H}_{\phi_\mathbf{p}}$ preserves direct limits since it is a left adjoint. This, together with the fact that the 
 counit $f^*\circ f_*\flecha 1_{ \mathcal{H}_{\phi_\mathbf{p}}}$ is an isomorphism, imply that the sequence
\begin{center}
$\xymatrix{0 \ar[r] & \varinjlim_{\mathcal{H}_{\phi_\mathbf{p}}}X_\lambda \ar[r] & \varinjlim_{\mathcal{H}_{\phi_\mathbf{p}}}Y_\lambda \ar[r] & \varinjlim_{\mathcal{H}_{\phi_\mathbf{p}}}Z_\lambda \ar[r] & 0 }$
\end{center}
is exact in $\mathcal{H}_{\phi_\mathbf{p}}$. \\

2) $\Longrightarrow$ 1) Let us consider a direct system of short exact sequences in $\Ht$
$$\xymatrix{0 \ar[r] & X_{\lambda} \ar[r] & Y_{\lambda} \ar[r] & Z_{\lambda} \ar[r] &0}.$$
By right exactness of colimits, we  get an exact sequence in $\Ht$:
$$\xymatrix{\limite{X_{\lambda}} \ar[r]^{h} & \limite{Y_{\lambda}} \ar[r]^{g} & \limite{Z_{\lambda}} \ar[r] &0}.$$
Consider now any prime ideal $\mathbf{p}$ and the associated ring homomorphism $f:R\flecha R_\mathbf{p}$. By the exactness of $f^*$ and the fact that this functor preserves direct limits, we get a commutative diagram with exact rows  

$$\xymatrix{& f^{*}(\limite X_{\lambda}) \ar[r] \ar[d]^{\wr} & f^{*}(\limite Y_{\lambda}) \ar[d]^{\wr} \ar[r] & f^{*}(\limite Z_{\lambda}) \ar[r] \ar[d]^{\wr} & 0 \\ 0 \ar[r] & \varinjlim_{\mathcal{H}_{\phi_{\mathbf{p}}}} f^{*}(X_{\lambda}) \ar[r] & \varinjlim_{\mathcal{H}_{\phi_{\mathbf{p}}}} f^{*}(Y_{\lambda}) \ar[r] & \varinjlim_{\mathcal{H}_{\phi_{\mathbf{p}}}} f^{*}(Z_{\lambda}) \ar[r] & 0}$$

where the vertical arrows are isomorphisms. It follows $ R_\mathbf{p} \otimes \Ker_{\Ht}(h)
=f^*(\Ker_{\Ht}(h))=0$ in $\D(R_\mathbf{p})$, for all $\mathbf{p}\in\Spec(R)$. This implies that the complex $\Ker_{\Ht}(h)$ is acyclic, and hence it is  zero in $\Ht$. 
\end{proof}

\begin{cor} \label{cor.modular heart via localization}
If $\Ht$ is a module category, then $\mathcal{H}_{\phi_\mathbf{p}}$ is a module category, for each $\mathbf{p}\in\Spec (R)$. 
\end{cor}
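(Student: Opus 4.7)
The plan is to verify the Gabriel--Mitchell criterion (Proposition \ref{prop.Gabriel-Mitchell}): it suffices to produce a progenerator in $\mathcal{H}_{\phi_{\mathbf{p}}}$. Since $\Ht$ is a module category by hypothesis, it is AB5 Grothendieck and admits a progenerator $P$. Corollary \ref{cor. Hp is AB5} then gives that $\mathcal{H}_{\phi_{\mathbf{p}}}$ is AB5, and Proposition \ref{generator of H} applied to the sp-filtration $\phi_{\mathbf{p}}$ of $\Spec(R_{\mathbf{p}})$ furnishes a set of generators; hence $\mathcal{H}_{\phi_{\mathbf{p}}}$ is a Grothendieck category.

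The natural candidate for a progenerator is $P':=f^*(P)\in\mathcal{H}_{\phi_{\mathbf{p}}}$, where $f:R\to R_{\mathbf{p}}$ is the canonical ring homomorphism and $f^*$ is the functor of Proposition \ref{prop.localization of hearts}. To see that $P'$ is a generator, let $Y\in\mathcal{H}_{\phi_{\mathbf{p}}}$: since $P$ generates $\Ht$, there is an epimorphism $P^{(I)}\twoheadrightarrow f_*(Y)$ in $\Ht$, and applying the exact left adjoint $f^*$ (which therefore preserves epimorphisms and coproducts) together with the counit isomorphism $f^*\circ f_*\cong 1_{\mathcal{H}_{\phi_{\mathbf{p}}}}$ yields an epimorphism $(P')^{(I)}\twoheadrightarrow Y$ in $\mathcal{H}_{\phi_{\mathbf{p}}}$. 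Projectivity of $P'$ follows from the adjunction $\Hom_{\mathcal{H}_{\phi_{\mathbf{p}}}}(P',-)\cong\Hom_\Ht(P,f_*(-))$ combined with the exactness of $f_*$ on hearts (Proposition \ref{prop.localization of hearts}) and the projectivity of $P$.

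The only nontrivial point is \emph{compactness} of $P'$. By the same adjunction, $\Hom_{\mathcal{H}_{\phi_{\mathbf{p}}}}(P',\coprod Y_i)\cong\Hom_\Ht(P,f_*(\coprod Y_i))$, so compactness of $P'$ will follow from that of $P$ once we show that $f_*:\mathcal{H}_{\phi_{\mathbf{p}}}\to\Ht$ preserves arbitrary coproducts. The crucial observation is that the t-structures $(\U,\U^\perp[1])$ and $(\mathcal{U}_{\phi_{\mathbf{p}}},\mathcal{U}_{\phi_{\mathbf{p}}}^\perp[1])$ are both compactly generated, so their co-aisles are closed under coproducts in the respective derived categories: indeed, for a compact object $S$ and a family $(Y_j)$ in the co-aisle, $\Hom_\D(S[n],\coprod Y_j)\cong\coprod\Hom_\D(S[n],Y_j)=0$ for all $n\geq 0$. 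Consequently both hearts $\Ht$ and $\mathcal{H}_{\phi_{\mathbf{p}}}$ are themselves closed under coproducts inside their ambient derived categories, and coproducts in each heart coincide with coproducts in the ambient triangulated category. Since the restriction-of-scalars functor $f_*:\D(R_{\mathbf{p}})\to\D(R)$ trivially preserves coproducts, we conclude $f_*(\coprod^{\mathcal{H}_{\phi_{\mathbf{p}}}}Y_i)\cong f_*(\coprod^{\D(R_{\mathbf{p}})}Y_i)\cong\coprod^{\D(R)}f_*(Y_i)\cong\coprod^\Ht f_*(Y_i)$. This completes the verification that $P'$ is a progenerator of $\mathcal{H}_{\phi_{\mathbf{p}}}$, and Proposition \ref{prop.Gabriel-Mitchell} concludes the proof.
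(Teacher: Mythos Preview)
Your proof is correct and follows essentially the same route as the paper's: take a progenerator $P$ of $\Ht$ and verify that $f^*(P)$ is a progenerator of $\mathcal{H}_{\phi_{\mathbf{p}}}$ using the adjunction $(f^*,f_*)$, the exactness of both functors, and the fact that $f_*$ preserves coproducts. The only differences are cosmetic: the paper merely records that both hearts are AB3 (cocomplete), which already suffices for Gabriel--Mitchell, whereas you invoke Corollary~\ref{cor. Hp is AB5} and Proposition~\ref{generator of H} to get the Grothendieck condition (harmless but unnecessary); and you spell out why $f_*$ preserves coproducts via closure of the co-aisles under coproducts, a point the paper simply asserts.
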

\begin{proof}
Note that both hearts are AB3 (=cocomplete) abelian categories (see \cite[Proposition 3.2]{PS}). 

Let $P$ be a progenerator of $\Ht$. Fix a prime ideal $\mathbf{p}$ and consider the canonical ring homomorphism $f:R\flecha R_\mathbf{p}$. We will prove that $f^*(P)$ is a progenerator of $\mathcal{H}_{\phi_\mathbf{p}}$. Indeed $f^*(P)$ is a projective object of this category since $f^*$ is left adjoint of an exact functor, which implies that it preserves projective objects. On the other hand, if $Y$ is any object of $\mathcal{H}_{\phi_\mathbf{p}}$, then we have an epimorphism $p:P^{(\Lambda )}\epic f_*(Y)$ in $\Ht$, for some set $\Lambda$. Exactness of $f^*$ and the fact that the counit of the adjunction $(f^*,f_*)$ is an isomorphism imply that $f^*(p):f^*(P)^{(\Lambda )}\cong f^*(P^{(\Lambda )})\flecha f^*f_*(Y)\cong Y$ is an epimorphism in $\mathcal{H}_{\phi_\mathbf{p}}$. Then $f^*(P)$ is a generator of $\mathcal{H}_{\phi_\mathbf{p}}$.\\

We finally prove that $f^*(P)$ is compact. Let $(Y_\lambda )_{\lambda\in\Lambda}$ be a family of objects of $\mathcal{H}_{\phi_\mathbf{p}}$. Using the adjunction $(f^*,f_*)$ and the fact that $f_*:\mathcal{H}_{\phi_\mathbf{p}}\flecha \Ht$ preserves coproducts, we immediately get that the canonical morphism
$$\coprod_{\lambda\in\Lambda}\Hom_{\mathcal{H}_{\phi_\mathbf{p}}}(f^*(P),Y_\lambda )\flecha  \Hom_{\mathcal{H}_{\phi_\mathbf{p}}}(f^*(P),\coprod_{\lambda\in\Lambda}Y_\lambda )$$ is an isomorphism. 
\end{proof}

\section{The Grothendieck condition for left bounded filtrations}
\subsection{Some technical auxiliary results}
All throughout this section, $\phi$ is an sp-filtration which is \emph{left bounded}, i.e., there is a $n$ integer such that $\phi(n)=\phi(n-k)$, for all integers $k\geq0$. We fix such an $n$, so that $\phi$ will be of the form 
$$\cdots =\phi(n-1)=\phi(n)\supseteq \phi(n+1) \supseteq \phi(n+2) \supseteq \cdots$$

\begin{lemma}\label{lem. bounded heart}
For each $X\in \Ht$, the following assertions holds.
\begin{enumerate}
\item[1)] $H^{j}(X)=0$, for each integer $j<n.$
\item[2)] If $\phi(m)=\emptyset$, for some integer $m$, then $H^{j}(X)=0$, for each $j\geq m$.
\end{enumerate}
\end{lemma}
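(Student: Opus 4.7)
The two assertions are of very different nature, and I would prove them separately.

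For part (2), the argument is essentially immediate. Since $\phi$ is a decreasing map, $\phi(m)=\emptyset$ forces $\phi(j)=\emptyset$ for every $j\geq m$. Given $X\in\Ht\subseteq\U$, the description of $\U$ in Proposition \ref{prop.sp-filtrations versus t-structures} yields $\Supp(H^j(X))\subseteq\phi(j)=\emptyset$, whence $H^j(X)=0$ for all $j\geq m$.

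For part (1), I would combine both characterizations from Proposition \ref{prop.sp-filtrations versus t-structures} with the left boundedness of $\phi$. Set $Z:=\phi(n)$. Left boundedness gives $\phi(j)=Z$ for all $j\leq n$, so the support condition for $X\in\U$ yields $H^j(X)\in\T_Z$ for every $j\leq n$; meanwhile $X\in\U^\perp[1]$, i.e.\ $X[-1]\in\U^\perp$, evaluated at $i=n$, forces $\mathbf{R}\Gamma_Z(X)\in\D^{\geq n}(R)$. The key technical input I would use is that, because $\T_Z$ is closed under injective envelopes in $R\Mod$ (as recalled in Section 2 after Proposition \ref{prop.sp-subsets versus torsion pairs}), every module in $\T_Z$ admits an injective resolution whose terms remain in $\T_Z$; consequently the natural transformation $\mathbf{R}\Gamma_Z\longrightarrow 1_{\D(R)}$ is an isomorphism on any complex whose cohomology modules all lie in $\T_Z$. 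Applied to $\tau^{\leq n}(X)$, this gives $\mathbf{R}\Gamma_Z(\tau^{\leq n}(X))\cong \tau^{\leq n}(X)$. Then, applying $\mathbf{R}\Gamma_Z$ to the canonical triangle $\tau^{\leq n}(X)\longrightarrow X\longrightarrow \tau^{>n}(X)\stackrel{+}{\longrightarrow}$, the left vertex becomes $\tau^{\leq n}(X)$, the middle lies in $\D^{\geq n}(R)$ by what we just said, and the right vertex $\mathbf{R}\Gamma_Z(\tau^{>n}(X))$ lies in $\D^{\geq n+1}(R)$ because a right derived functor of a left exact functor preserves being concentrated in degrees $>n$. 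The long exact sequence in cohomology then forces $H^i(\tau^{\leq n}(X))=0$, and hence $H^i(X)=0$, for every $i<n$.

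The only delicate point, and the main obstacle, is to justify that $\mathbf{R}\Gamma_Z\to 1$ is indeed an isomorphism on complexes whose cohomology lies in $\T_Z$, including potentially unbounded-below ones such as $\tau^{\leq n}(X)$, which may have nonzero cohomology in arbitrarily negative degrees. For a single module in $\T_Z$ the claim is immediate from the injective-envelope closure recalled above; for a general complex $Y$ with $H^q(Y)\in\T_Z$ for all $q$ it should follow from the hypercohomology spectral sequence $E_2^{p,q}=R^p\Gamma_Z(H^q(Y))\Rightarrow H^{p+q}(\mathbf{R}\Gamma_Z(Y))$, which collapses to a single row since $R^p\Gamma_Z$ vanishes on $\T_Z$ for $p>0$ and acts as the identity for $p=0$, and whose edge morphism is compatible with the natural map $\mathbf{R}\Gamma_Z(Y)\to Y$.
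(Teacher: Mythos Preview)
Your proof is correct and uses the same two ingredients as the paper's: the co-aisle description from Proposition~\ref{prop.sp-filtrations versus t-structures}, giving $\mathbf{R}\Gamma_Z(X)\in\D^{\geq n}(R)$, and the fact that $\mathbf{R}\Gamma_Z\to 1_{\D(R)}$ is an isomorphism on complexes with cohomology in $\T_Z$.

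The paper's argument is shorter, however, because it avoids your truncation detour. You only record $H^j(X)\in\T_Z$ for $j\leq n$, but since $\phi$ is decreasing one has $\phi(j)\subseteq\phi(n)=Z$ for \emph{every} $j\in\mathbb{Z}$, so $\Supp(H^j(X))\subseteq\phi(j)\subseteq Z$ holds for all $j$. The paper therefore applies the key isomorphism $\mathbf{R}\Gamma_Z(X)\cong X$ directly to $X$ (citing \cite[Theorem~1.8]{AJS} for this fact), and the conclusion $X\in\D^{\geq n}(R)$ is immediate---no passage through $\tau^{\leq n}(X)$ or the associated triangle is needed. Your caution about spectral-sequence convergence for complexes unbounded below is well placed; the paper sidesteps it by citing the external reference, and note that even the paper's direct application to $X$ requires the unbounded version of the claim, since a priori nothing yet bounds $X$ from below.
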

\begin{proof}
Assertion 2 is clear, so that we just prove 1. 
By the left bounded condition of $\phi$, we know that $H^{k}(X)\in \T_{n}$, for all integers $k$.  It follows from \cite[Theorem 1.8]{AJS} that $\mathbf{R}\Gamma_{\phi(n)}X\cong X$. On the other hand, $X[-1]\in \U^{\perp}$, and hence $\mathbf{R}\Gamma_{\phi(k)}X\in \D^{>k-1}(R)$ for all integers $k$. Putting $k=n$ we get that $\mathbf{R}\Gamma_{\phi(n)}X=X\in D^{>n-1}(R)$.
\end{proof}

\begin{lemma}\label{lem. last homology}
For each $X\in \Ht$, the following assertions holds.
\begin{enumerate}
\item[1)] $\Hom_{\D(R)}(Y[-k],\tau^{\leq k}(X))=0$, for all $k\in \mathbb{Z}$ and for each $Y\in \T_{k+1}$.
\item[2)] $\Hom_{\D(R)}(Y[-k],\tau^{\leq k-2}(X)[-1])=0$, for all $k\in \mathbb{Z}$ and for each $Y\in \T_{k}$.
\item[3)] If $m$ is the smallest integer such that $H^{m}(X)\neq 0$, then
$$H^{m}(X)\in \F_{m+1} \bigcap \Ker \Ext^{1}_{R}(\T_{m+2},?).$$
\end{enumerate}
\end{lemma}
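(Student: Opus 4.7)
My plan is to deduce the more refined assertion (3) from (1) and (2), both of which I would obtain by applying the cohomological functor $\Hom_{\mathcal{D}(R)}(Y[-k],?)$ to a suitable shift of the canonical truncation triangle of $X$ and pinning down the middle term from the two vanishing neighbors in the resulting long exact sequence.

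For assertion (1), I apply $\Hom_{\mathcal{D}(R)}(Y[-k],?)$ to the triangle
$$\tau^{\leq k}(X)\longrightarrow X\longrightarrow \tau^{>k}(X)\stackrel{+}{\longrightarrow}$$
and look at the segment
$$\Hom_{\mathcal{D}(R)}(Y[-k],\tau^{>k}(X)[-1])\longrightarrow \Hom_{\mathcal{D}(R)}(Y[-k],\tau^{\leq k}(X))\longrightarrow \Hom_{\mathcal{D}(R)}(Y[-k],X).$$
For the right end, $Y\in \mathcal{T}_{k+1}$ means $\Supp(Y)\subseteq \phi(k+1)$, hence $Y[-(k+1)]\in \mathcal{U}_\phi$ by Proposition \ref{prop.sp-filtrations versus t-structures}; since $X[-1]\in \mathcal{U}_\phi^\perp$, it follows that
$$\Hom_{\mathcal{D}(R)}(Y[-k],X)=\Hom_{\mathcal{D}(R)}(Y[-(k+1)],X[-1])=0.$$
The left end vanishes by the standard fact that $\Hom_{\mathcal{D}(R)}(Y[-k],C)\cong \Ext^k_R(Y,C)=0$ whenever $Y$ is an $R$-module and $C\in \mathcal{D}^{>k}(R)$; this follows at once from the hyperext spectral sequence $E_2^{p,q}=\Ext^p_R(Y,H^q(C))\Rightarrow \Ext^{p+q}_R(Y,C)$, since $H^q(C)=0$ for $q\leq k$ kills every contribution in total degree $k$. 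We apply it with $C=\tau^{>k}(X)[-1]\in \mathcal{D}^{>k+1}(R)\subseteq \mathcal{D}^{>k}(R)$.

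Assertion (2) is entirely parallel, applied now to the $[-1]$-shift of the truncation triangle at level $k-2$, namely
$$\tau^{\leq k-2}(X)[-1]\longrightarrow X[-1]\longrightarrow \tau^{>k-2}(X)[-1]\stackrel{+}{\longrightarrow}.$$
The hypothesis $Y\in \mathcal{T}_k$ gives $Y[-k]\in \mathcal{U}_\phi$ directly, hence $\Hom_{\mathcal{D}(R)}(Y[-k],X[-1])=0$ by $X[-1]\in \mathcal{U}_\phi^\perp$; and $\tau^{>k-2}(X)[-2]\in \mathcal{D}^{>k}(R)$, so the concentration fact above again kills the other neighbor.

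Finally, assertion (3) is a direct specialization: by the minimality of $m$ we have $\tau^{\leq m}(X)\cong H^m(X)[-m]$, so assertion (1) with $k=m$ reduces to $\Hom_R(Y,H^m(X))=0$ for every $Y\in \mathcal{T}_{m+1}$, i.e.\ $H^m(X)\in \mathcal{F}_{m+1}$; and assertion (2) with $k=m+2$ reduces to
$$\Ext^1_R(Y,H^m(X))\cong \Hom_{\mathcal{D}(R)}(Y[-(m+2)],H^m(X)[-m-1])=0$$
for every $Y\in \mathcal{T}_{m+2}$, giving $H^m(X)\in \Ker \Ext^1_R(\mathcal{T}_{m+2},?)$. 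The only mildly delicate point, which I view as the main obstacle to double-check, is the concentration fact used for the left neighbors; as an alternative to the spectral sequence above, it can also be justified elementarily by replacing $C$ with a K-injective complex concentrated in degrees $\geq k+1$, whereupon any chain map from the stalk $Y[-k]$ (which lives in degree $k$) is automatically zero.
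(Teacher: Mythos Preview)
Your proof is correct. For part (3) you proceed exactly as the paper does. For parts (1) and (2), however, you take a slightly more direct route: you apply $\Hom_{\mathcal{D}(R)}(Y[-k],?)$ to the canonical truncation triangle and kill the two neighboring terms using $X[-1]\in\mathcal{U}_\phi^\perp$ on one side and a degree-concentration argument on the other. The paper instead invokes Remark~\ref{rem. truncation of phi}, which says $\tau^{\leq k}(X)\in\mathcal{H}_{\phi_{\leq k+1}}$ and $\tau^{\leq k-2}(X)\in\mathcal{H}_{\phi_{\leq k}}$; from there (1) and (2) are one-liners, since $Y[-k]$ lies in the aisle of the corresponding truncated filtration. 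The underlying triangle manipulation is the same---indeed, Remark~\ref{rem. truncation of phi} is proved by precisely your argument, only tested against the generators $R/\mathbf{p}[-m]$ rather than an arbitrary $Y$---so the difference is organizational. Your version is self-contained; the paper's version isolates the membership $\tau^{\leq k}(X)\in\mathcal{H}_{\phi_{\leq k+j}}$ as a reusable fact, which it then calls on repeatedly later (for instance in the proof of Theorem~\ref{teo. AB5 finite}).
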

\begin{proof}
1) and 2) follow from the fact that $\tau^{\leq k}(X)\in \mathcal{H}_{\phi_{\leq k+1}}$ and $\tau^{\leq k-2}(X)\in \mathcal{H}_{\phi_{\leq k}}$ for all integer $k$ (see Remark \ref{rem. truncation of phi}.) \\

3) Let $m$ the smallest integer, such that $H^{m}(X)\neq 0$. By 1), we have:
$$0=\Hom_{D(R)}(Y[-m],\tau^{\leq m}(X))=\Hom_{\D(R)}(Y[-m],H^{m}(X)[-m])=\Hom_{R}(Y,H^{m}(X))$$
for each $Y\in \T_{m+1}$, and hence $H^{m}(X)\in \F_{m+1}.$ Similarly, by 1) and 2),  we have
\begin{small}
$$0=\Hom_{\D(R)}(Y[-m-2],\tau^{\leq m+2-2}(X)[-1])=\Hom_{\D(R)}(Y[-m-2],H^{m}(X)[-m-1])=\Ext^{1}_{R}(Y,H^{m}(X))$$
for each $Y\in \T_{m+2}$.
\end{small}
\end{proof}

In the following proposition, we gather a few properties that will  be 
needed later on.

\begin{prop} \label{prop.gathering properties for bounded}
Let $\phi :\mathbb{Z}\longrightarrow\mathcal{P}(\Spec(R))$ be a left
bounded sp-filtration and let us put $m=\text{min}\{i\in\mathbb{Z}:$
$\phi (i)\supsetneq\phi (i+1)\}$. Consider a direct system
$(X_\lambda )_{\lambda\in\Lambda}$ in $\Ht$ and let
$\varphi_j:\varinjlim H^j(X_\lambda )\flecha H^j(\limite
X_\lambda )$ be the canonical map, for each $j\in\mathbb{Z}$. The
following assertions hold:

\begin{enumerate}
\item If $\phi$ is also eventually trivial and $t=\text{max}\{i\in\mathbb{Z}:$ $\phi
(i)\neq\emptyset\}$, then $\varphi_t$ is an isomorphism and
$\varphi_{t-1}$ is an epimorphism;
\item $\Supp(\Ker (\varphi_j))\subseteq\phi (j+1)$ and $\Supp(\Coker(\varphi_j))\subseteq\phi
(j+2)$, for all $j\in\mathbb{Z}$;
\item $\varphi_m$ is an isomorphism.
\end{enumerate}
\end{prop}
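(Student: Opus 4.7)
The strategy is to present $\varphi_j$ as the middle morphism of a single long exact sequence whose outer terms have readily bounded supports, thereby handling all three assertions simultaneously. Let $Z:=\varinjlim_{\mathcal{C}(R)}X_\lambda$ be the direct limit computed in the category of chain complexes of $R$-modules. Since $\T_j$ is closed under direct limits (Remark \ref{remark class closed direct.limit}) and $H^j(Z)=\varinjlim H^j(X_\lambda)$, we have $Z\in\U$. Write $L:=\limite X_\lambda$. The left adjoint $L:\U\longrightarrow\Ht$ of Subsection \ref{subsection of $L$} preserves colimits and is the identity on $\Ht$, so a universal-property argument (valid because filtered colimits in $\mathcal{C}(R)$ are exact and hence represent the homotopy colimit in $\D(R)$) provides a canonical isomorphism $L\cong L(Z)$ and an associated triangle
\[
W\longrightarrow Z\longrightarrow L\stackrel{+}{\longrightarrow}
\]
with $W\in\U[1]$; by Proposition \ref{prop.sp-filtrations versus t-structures}, this translates into $\Supp(H^j(W))\subseteq\phi(j+1)$ for every $j$.

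Applying $H^\bullet$ to the triangle yields the long exact sequence
\[
H^j(W)\longrightarrow\varinjlim H^j(X_\lambda)\stackrel{\varphi_j}{\longrightarrow}H^j(L)\longrightarrow H^{j+1}(W)\longrightarrow\varinjlim H^{j+1}(X_\lambda),
\]
where a routine naturality check identifies the middle morphism with $\varphi_j$. This immediately settles assertion 2: $\Ker(\varphi_j)$ is a quotient of $H^j(W)\in\T_{j+1}$, and $\Coker(\varphi_j)$ is a submodule of $H^{j+1}(W)\in\T_{j+2}$. For assertion 1, the hypothesis $\phi(t+1)=\emptyset$ forces $\T_{j+1}=0$ for $j\geq t$, hence $H^j(W)=0$ for all $j\geq t$; the sequence at $j=t$ then makes $\varphi_t$ an isomorphism, and at $j=t-1$ it makes $\varphi_{t-1}$ an epimorphism.

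For assertion 3, I would begin by sharpening Lemma \ref{lem. bounded heart}(1): the minimality of $m$ together with left-boundedness gives $\phi(j)=\phi(m)$ for every $j\leq m$, and rerunning the proof of that lemma (with $\mathbf{R}\Gamma_{\phi(m)}X=X$ and $X[-1]\in\U^\perp$) now yields $H^j(X)=0$ for all $j<m$ and every $X\in\Ht$. Consequently, $m$ is the smallest possibly nonzero cohomological degree of the $X_\lambda$ and of $L$, so Lemma \ref{lem. last homology}(3) places both $H^m(X_\lambda)$ and $H^m(L)$ in $\F_{m+1}\cap\Ker(\Ext^1_R(\T_{m+2},?))$. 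Because $R/\mathbf{p}$ is finitely presented, $\Ext^k_R(R/\mathbf{p},?)$ commutes with direct limits, so $\varinjlim H^m(X_\lambda)$ lies in the same intersection. Assertion 2 now gives $\Ker(\varphi_m)\subseteq\varinjlim H^m(X_\lambda)\in\F_{m+1}$ with $\Supp(\Ker(\varphi_m))\subseteq\phi(m+1)$, forcing $\Ker(\varphi_m)\in\T_{m+1}\cap\F_{m+1}=0$.

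With $\Ker(\varphi_m)=0$ one has $\Imagen(\varphi_m)\cong\varinjlim H^m(X_\lambda)\in\Ker(\Ext^1_R(\T_{m+2},?))$. Applying $\Hom_R(T,?)$ for $T\in\T_{m+2}$ to $0\to\Imagen(\varphi_m)\to H^m(L)\to\Coker(\varphi_m)\to 0$, and using the vanishing of $\Hom_R(T,H^m(L))$ (because $T\in\T_{m+1}$ and $H^m(L)\in\F_{m+1}$) together with $\Ext^1_R(T,\Imagen(\varphi_m))=0$, one obtains $\Hom_R(T,\Coker(\varphi_m))=0$ for every $T\in\T_{m+2}$, i.e., $\Coker(\varphi_m)\in\F_{m+2}$. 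Combined with assertion 2 this yields $\Coker(\varphi_m)\in\T_{m+2}\cap\F_{m+2}=0$, finishing assertion 3. The main technical hurdle is justifying the identification $L\cong L(Z)$ in the opening step, which requires that the chain-complex direct limit represent the homotopy colimit in $\D(R)$; everything else is bookkeeping with the displayed long exact sequence and with the torsion-pair/Ext-vanishing features of $\Ht$.
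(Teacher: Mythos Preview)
Your approach is genuinely different from the paper's and, once the key identification is secured, more economical. The paper proves assertion 1 directly---using that $H^t:\Ht\to R\Mod$ is right exact and invoking \cite[Lemma~3.5]{CGM} to control the kernel of the canonical epimorphism $\coprod X_\lambda\twoheadrightarrow\limite X_\lambda$---and then obtains assertion 2 by \emph{localizing at primes}: for $\mathbf p\notin\phi(j+2)$ the filtration $\phi_{\mathbf p}$ becomes eventually trivial with top degree $\leq j+1$, so assertion 1 for $\phi_{\mathbf p}$ forces $(\Coker\varphi_j)_{\mathbf p}=0$, and similarly for the kernel. You instead extract everything from a single triangle $W\to Z\to L\stackrel{+}{\to}$ with $W\in\U[1]$; this gives assertion 2 immediately and assertion 1 as the degenerate case $\phi(t+1)=\emptyset$. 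Your treatment of assertion 3 is essentially the paper's (Lemma \ref{lem. last homology}(3) plus the fact that $\Ext$ against finitely presented modules commutes with direct limits), with only a cosmetic difference: the paper argues via splitting of the short exact sequence, you via the long $\Hom$--$\Ext$ sequence.

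The one point that needs more care is precisely the identification $L\cong L(Z)$. Your justification---``filtered colimits in $\mathcal C(R)$ represent the homotopy colimit in $\mathcal D(R)$, so a universal-property argument applies''---is not quite a proof within the paper's framework: homotopy colimits in a bare triangulated category have no universal property, and $L:\U\to\Ht$ preserves colimits only in a sense you have not made precise (since $\U$ is not abelian). The claim is nevertheless true and can be proved elementarily. Write $g:\coprod X_{\lambda\mu}\to\coprod X_\lambda$ for the colimit-defining morphism and factor it as $g=\iota\circ\pi$ in $\mathcal C(R)$ with $K=\Ker g$, $\Im g$, and $Z=\Coker g$. Because $H^j(\coprod X_\lambda)\to H^j(Z)=\varinjlim H^j(X_\lambda)$ is surjective for every $j$, the long exact sequences of the two short exact sequences in $\mathcal C(R)$ show that $H^j(\Im g)=\Im H^j(g)\subseteq\coprod H^j(X_\lambda)\in\T_j$ and then $H^j(K)\subseteq\coprod H^j(X_{\lambda\mu})\in\T_j$; hence $K,\Im g\in\U$. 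Now for any $Y\in\Ht$ one has $\Hom_{\D(R)}(\Im(g)[1],Y)=0=\Hom_{\D(R)}(K[1],Y)$ (both lie in $\U[1]$ and $Y[-1]\in\U^\perp$), and the triangles $K\to\coprod X_{\lambda\mu}\to\Im g\stackrel{+}{\to}$ and $\Im g\to\coprod X_\lambda\to Z\stackrel{+}{\to}$ give
\[
\Hom_{\D(R)}(Z,Y)\;\cong\;\{f\in\Hom_{\D(R)}(\textstyle\coprod X_\lambda,Y):f\circ\iota=0\}\;=\;\{f:f\circ g=0\}\;=\;\Hom_{\Ht}(\Coker_{\Ht}(g),Y).
\]
By adjunction the left side is $\Hom_{\Ht}(L(Z),Y)$, so $L(Z)\cong\Coker_{\Ht}(g)=\limite X_\lambda$ and your triangle exists with $W=\tau^{\leq}_\phi(Z[-1])[1]\in\U[1]$. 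With this fix your argument is complete and gives a pleasant alternative to the paper's localization step.
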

\begin{proof}
1) By taking the exact sequence of homologies associated to any
exact sequence in $\Ht$, one readily sees that the functor
$H^t:\Ht\flecha R\Mod$ is right exact. Since it preserves
coproducts we conclude that it preserve colimits and, in particular,
direct limits. On the other hand, let us consider the canonical
exact sequence

\begin{center}
$0 \flecha K \xymatrix{\ar[r]^{\iota}&} \coprod{X_{\lambda}}
\xymatrix{\ar[r]^{p} &} \limite{X_{\lambda}} \flecha 0$.
\end{center}
By \cite[Lemma 3.5]{CGM}, we get that $H^{t}(\iota)$ is a
monomorphism. But, applying the exact sequence of homologies to the
exact sequence above, we obtain the following exact sequence in
$R\Mod$
\begin{center}
$\xymatrix{\cdots \ar[r] & H^{t-1}(\coprod {X_{\lambda}})
\ar[r]^{H^{t-1}(p)\hspace{0.2cm}} & H^{t-1}(\limite{X_{\lambda}})
\ar[r] & H^{t}(K) \ar@{^(->}[r]^{H^{t}(\iota)\hspace{0.2cm}} &
H^{t}(\coprod{X_{\lambda}}) \ar[r] & \cdots}$
\end{center}
Thus $H^{t-1}(p)$ is an epimorphism and it is the
composition
\begin{center}
$\coprod H^{t-1}(X_{\lambda}) \epic \varinjlim{H^{t-1}(X_{\lambda})}
\xymatrix{\ar[r]^{can} &} H^{t-1}(\limite{X_\lambda})$
\end{center}
since  $H^{t-1}$ preserves coproducts. Therefore the second
arrow is also an epimorphism. \\

2) Let $\mathbf{p}\in\Spec (R)\setminus\phi (j+2)$. Then the
sp-flitration $\phi_\mathbf{p}$ of $\Spec(R_\mathbf{p})$ satisfies
that $\phi_\mathbf{p}(j+2)=\emptyset$. Applying assertion 1 to
$\phi_\mathbf{p}$, we get that
$R_\mathbf{p}\otimes_R\Coker(\varphi_j)=0$. Similarly, if
$\mathbf{p}\in\Spec (R)\setminus\phi (j+1)$ then assertion 1 for
$\phi_\mathbf{p}$ gives that $R_\mathbf{p}\otimes\Ker
(\varphi_j)=0$. \\

3) By Lemmas \ref{lem. last homology} and \ref{lem.Ext-orthogonal}
and the fact that $\Ext_R^i(R/\mathbf{p},?)$ preserves direct
limits, for each $\mathbf{p}\in\Spec(R)$, we know that
$\varinjlim{H^{m}(X_{\lambda})}$ is in $\F_{m+1}\cap\Ker
\Ext^{1}_{R}(\T_{m+2},?)$. This in turn implies that
$\Ker(\varphi_m)\in\F_{m+1}$.  But assertion 2 tells us that $\Ker
(\varphi_{m})\in\T_{m+1}$, so that $\varphi_m$ is a monomorphism.
Assertion 2 also says that $\Coker(\varphi_m)\in\T_{m+2}$, which
implies then that  the sequence

\begin{center}
$0\flecha
\varinjlim{H^{m}(X_{\lambda})}\xymatrix{\ar[r]^{\varphi_m} & } H^m(\limite
X_\lambda )\flecha \Coker(\varphi_m) \flecha 0$
\end{center}
splits. By Lemma \ref{lem. last homology}, we know that $H^m(\limite
X_\lambda )\in\F_{m+1}$, which implies then  that
$\Coker(\varphi_m)\in\F_{m+1}$. But we also have that
$\Coker(\varphi_m)\in\T_{m+2}\subseteq\T_{m+1}$, so that $\varphi_m$
is an epimorphism.
\end{proof}

\subsection{The eventually trivial  case}

All throughout this subsection, $\phi$ is a left bounded and eventually trivial sp-filtration. $L(\phi)=\text{min}\{i \in \mathbb{Z}| \phi(i)=\emptyset \}-\text{min}\{i \in \mathbb{Z} | \phi(i)\supsetneq \phi(i+1)\}$ is called the \emph{length of the sp-filtration}. Without loss of generality, we assume that  $\phi$ is of the form

$$\cdots =\phi(-n-1)=\phi(-n)\supsetneq \phi(-n+1) \supseteq \cdots \supseteq \phi(-1) \supseteq \phi(0) \supsetneq \phi(1)=\emptyset = \cdots,$$
for some $n\in\mathbb{N}$.
 In such case, we have that $\Ht \subset \D^{[-n,0]}(R)$ (see Lemma \ref{lem. bounded
 heart}) and $L(\phi )=n+1$.

We start by giving a series of auxiliary lemmas which prepare the ground for the induction argument that will prove that $\Ht$ is a Grothendieck category in this case. 

\begin{lemma} \label{lem. T[0] closed for subobjects}
$\T_0[0]$ is a full subcategory of $\Ht$ closed under taking
subobjects.
\end{lemma}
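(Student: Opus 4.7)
The plan is to first establish $\TF_{0}=\T_{0}$ (so that $\T_{0}[0]\subseteq\Ht$) and then deduce closure under subobjects by reducing to the canonical t-structure on $\D(R)$. For the inclusion, since $\phi(k)=\emptyset$ for all $k\geq 1$, the conditions $\Ext_{R}^{k-1}(R/\mathbf{p},Y)=0$ with $\mathbf{p}\in\phi(k)$ and $k\geq 1$ appearing in Proposition \ref{prop. description of stalk} are vacuous. Hence $\TF_{0}=\T_{0}$, so $T[0]\in\Ht$ for every $T\in\T_{0}$, and fullness of $\T_{0}[0]$ inside $\Ht$ is automatic because $\Ht$ is a full subcategory of $\D(R)$.

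For closure under subobjects, fix a monomorphism $\iota:X\hookrightarrow T[0]$ in $\Ht$ with $T\in\T_{0}$. The plan is to show that $X$ is concentrated in degree $0$ with respect to the canonical t-structure, so that $X\cong H^{0}(X)[0]$; since $X\in\U$, it will then automatically follow that $H^{0}(X)\in\T_{0}$. First, $H^{0}(X)[0]\in\Ht$ by the previous paragraph. Secondly, by Remark \ref{rem. truncation of phi} applied with $k=-1$ and $j=2$, the canonical truncation $\tau^{\leq -1}(X)$ lies in $\mathcal{H}_{\phi_{\leq 1}}=\Ht$, the last equality because $\phi(k)=\emptyset$ for $k\geq 1$. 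Consequently, the canonical truncation triangle
$$\tau^{\leq -1}(X)\longrightarrow X\longrightarrow H^{0}(X)[0]\stackrel{+}{\longrightarrow}$$
has its three vertices in $\Ht$ and hence is a short exact sequence in the abelian category $\Ht$.

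Composing the resulting monomorphism $\tau^{\leq -1}(X)\hookrightarrow X$ with $\iota$ yields a monomorphism $\tau^{\leq -1}(X)\hookrightarrow T[0]$ in $\Ht$. But this morphism must vanish in $\D(R)$, since $\tau^{\leq -1}(X)\in\D^{\leq -1}(R)$, $T[0]\in\D^{\geq 0}(R)$, and the canonical t-structure axiom gives $\Hom_{\D(R)}(\D^{\leq -1}(R),\D^{\geq 0}(R))=0$. A zero morphism that is also a monomorphism forces the source to vanish, so $\tau^{\leq -1}(X)=0$ and therefore $X\cong H^{0}(X)[0]\in\T_{0}[0]$, as required. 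I expect the main obstacle to be precisely the verification that $\tau^{\leq -1}(X)\in\Ht$, as this is where the hypothesis $\phi(1)=\emptyset$ intervenes (through Remark \ref{rem. truncation of phi}); once this is granted, the proof reduces to a clean comparison between $(\U,\U^{\perp}[1])$ and the canonical t-structure.
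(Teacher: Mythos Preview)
Your proof is correct and follows essentially the same approach as the paper's. Both arguments rest on the two key facts that $\TF_0=\T_0$ (so $\T_0[0]\subseteq\Ht$) and that $\tau^{\leq -1}(X)\in\Ht$ via Remark \ref{rem. truncation of phi}, so that the canonical truncation triangle is a short exact sequence in $\Ht$; the only cosmetic difference is that the paper factors $\iota$ through the canonical map $v:X\to H^0(X)[0]$ and concludes $\Ker_{\Ht}(v)=\tau^{\leq -1}(X)=0$ from $v$ being a monomorphism, whereas you compose the monomorphism $\tau^{\leq -1}(X)\hookrightarrow X$ with $\iota$ and observe the result is both zero and monic.
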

\begin{proof}
By Example \ref{exem.TF for largest degrees}, we know that $\T_0[0]$
is a full subcategory of $\Ht$. If now $u:X\monic T[0]$ is
a monomorphism in $\Ht$, where $T\in\T_0$, then truncation with
respect to the canonical t-structure \linebreak $(\D^{\leq -1}(R),\D^{\geq
-1}(R))$ gives an isomorphism
$$\text{Hom}_{\D(R)}(\tau^{\geq
0}X,T[0])\cong\text{Hom}_{\D(R)}(H^0(X)[0],T[0]) \iso \text{Hom}_{\Ht}(X,T[0])\cong\text{Hom}_{\D(R)}(X,T[0]) .$$ Bearing in mind
that $\tau^{\geq 0}X\cong H^0(X)[0]$ is in $\Ht$, we have a
factorization
$\xymatrix{u:X \ar[r]^{v \hspace{0.5 cm}} & H^0(X)[0] \ar[r] & T[0]}$ in
$\Ht$, where $v$ gets identified with the canonical morphism
$X\flecha \tau^{\geq 0}X\cong\tau^{>-1}X$. We then get that
$v$ is a monomorphism in $\Ht$. But Remark \ref{rem. truncation of
phi} says that $\tau^{\leq -1}X\in\Ht$, which implies that
$\Ker_{\Ht}(v)=\tau^{\leq -1}X$. This last complex is then zero in
$\Ht$, which implies that $X\cong H^0(X)[0]\in\T_0[0]$.
\end{proof}

\begin{lemma}\label{lem. comparate sp-filtration}
If $(X_{\lambda})$ is a direct system in $\Ht \cap \D^{\leq -1}(R)$, then $(X_{\lambda})$ is a direct system of $\mathcal{H}_{\phi_{ \leq -1}}$. Moreover, there exists a $T\in\T_{0}$ and a triangle in $\D(R)$ of the form:
$$\xymatrix{T[1] \ar[r] & \varinjlim_{\mathcal{H}_{\phi_{ \leq -1}}}{X_{\lambda}} \ar[r] & \limite{X_{\lambda}} \ar[r]^{\hspace{0.6cm}+} &}$$
\end{lemma}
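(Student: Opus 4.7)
The plan is to realize both direct limits as cohomologies of a common cone in $\mathcal{D}(R)$ and then compare them via a long exact sequence for the nested aisles. Assertion (1) is immediate: since each $X_\lambda\in\mathcal{D}^{\leq -1}(R)$ one has $\tau^{\leq -1}X_\lambda=X_\lambda$, so Remark \ref{rem. truncation of phi} (with $k=-1$, $j=0$) gives $X_\lambda\in\mathcal{H}_{\phi_{\leq -1}}$, and the transition maps, being morphisms in $\mathcal{D}(R)$, form a direct system in the new heart.

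For the triangle: since both t-structures are compactly generated, their co-aisles are closed under coproducts, so $\coprod X_\lambda$ computed in $\mathcal{D}(R)$ is also the coproduct in both $\mathcal{H}_\phi$ and $\mathcal{H}_{\phi_{\leq -1}}$. Let $g:\coprod X_{\lambda\mu}\to\coprod X_\lambda$ be the colimit-defining morphism and complete to a triangle $\coprod X_{\lambda\mu}\stackrel{g}{\to}\coprod X_\lambda\to C\stackrel{+}{\to}$ in $\mathcal{D}(R)$. The ordinary long exact sequence of cohomology forces $H^0(C)=0$, so $C\in\mathcal{U}_\phi\cap\mathcal{D}^{\leq -1}(R)=\mathcal{U}_{\phi_{\leq -1}}$. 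Applying the cohomological functors $\tilde{H}_\phi$ and $\tilde{H}_{\phi_{\leq -1}}$ to this triangle identifies $\tilde{H}_\phi(C)=\varinjlim_{\mathcal{H}_\phi}X_\lambda=:Y_\phi$ and $\tilde{H}_{\phi_{\leq -1}}(C)=\varinjlim_{\mathcal{H}_{\phi_{\leq -1}}}X_\lambda=:Y_{\leq -1}$.

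Next I would show $Y_\phi\in\mathcal{H}_{\phi_{\leq -1}}$. From the $\phi$-truncation triangle $N\to C\to Y_\phi\stackrel{+}{\to}$ with $N\in\mathcal{U}_\phi[1]$: since $\phi(1)=\emptyset$ one has $\mathcal{U}_\phi[1]\subseteq\mathcal{D}^{\leq -1}(R)$, and a comparison of supports using $\phi(j+1)\subseteq\phi(j)$ upgrades this to $\mathcal{U}_\phi[1]\subseteq\mathcal{U}_{\phi_{\leq -1}}$, so $N\in\mathcal{U}_{\phi_{\leq -1}}$. Taking ordinary cohomology of the triangle yields $H^0(Y_\phi)=0$, so $Y_\phi\in\mathcal{U}_\phi\cap\mathcal{D}^{\leq -1}(R)=\mathcal{U}_{\phi_{\leq -1}}$; together with the evident inclusion $\mathcal{U}_\phi^\perp\subseteq\mathcal{U}_{\phi_{\leq -1}}^\perp$ this gives $Y_\phi\in\mathcal{H}_{\phi_{\leq -1}}$. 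Applying $\tilde{H}_{\phi_{\leq -1}}$ to the triangle $N\to C\to Y_\phi\stackrel{+}{\to}$, and using $\tilde{H}^k_{\phi_{\leq -1}}(N)=0$ for $k>0$ (as $N\in\mathcal{U}_{\phi_{\leq -1}}$) and $\tilde{H}^k_{\phi_{\leq -1}}(Y_\phi)=0$ for $k\neq 0$ (as $Y_\phi\in\mathcal{H}_{\phi_{\leq -1}}$), the long exact sequence collapses to
\[
0\to\tilde{H}_{\phi_{\leq -1}}(N)\to Y_{\leq -1}\to Y_\phi\to 0
\]
in $\mathcal{H}_{\phi_{\leq -1}}$.

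Finally, I identify $\tilde{H}_{\phi_{\leq -1}}(N)$. Since $N\in\mathcal{U}_\phi[1]$, one has $H^j(N)=0$ for $j\geq 0$ and $H^{-1}(N)\in\mathcal{T}_{\phi(0)}=\mathcal{T}_0$. I claim the canonical truncation triangle $\tau^{\leq -2}N\to N\to H^{-1}(N)[1]\stackrel{+}{\to}$ is precisely the truncation triangle of $N$ with respect to $(\mathcal{U}_{\phi_{\leq -1}}[1],\mathcal{U}_{\phi_{\leq -1}}^\perp[1])$: a support check confirms $\tau^{\leq -2}N\in\mathcal{U}_{\phi_{\leq -1}}[1]$, while $H^{-1}(N)[1]\in\mathcal{H}_{\phi_{\leq -1}}$ by Proposition \ref{prop. description of stalk}, the Ext-vanishing being automatic because $H^{-1}(N)\in\mathcal{T}_0\subseteq\mathcal{T}_{-1}$ and the relevant negative-degree $\Hom$s vanish. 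By the uniqueness of t-structure truncations, $\tilde{H}_{\phi_{\leq -1}}(N)=H^{-1}(N)[1]$, and setting $T:=H^{-1}(N)\in\mathcal{T}_0$ turns the short exact sequence above into the desired triangle $T[1]\to Y_{\leq -1}\to Y_\phi\stackrel{+}{\to}$. The main obstacle is this last identification, which hinges on the containment $\mathcal{U}_\phi[1]\subseteq\mathcal{U}_{\phi_{\leq -1}}$ and on keeping careful track of the interplay between canonical, $\phi$-, and $\phi_{\leq -1}$-truncations.
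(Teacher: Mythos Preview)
Your proof is correct and follows essentially the same skeleton as the paper's: both identify the two direct limits as the $\phi$- and $\phi_{\leq -1}$-truncations of the common cone $C$ (the paper's $Z$) on the colimit-defining map $g$, and both end up with $T=H^{0}(\tau_\phi^{\leq}(C[-1]))=H^{-1}(N)$. The difference is in how the comparison triangle is obtained. The paper simply invokes \cite[Corollary 5.6]{AJS}, which for nested aisles $\mathcal U_{\phi_{\leq -1}}\subseteq\mathcal U_\phi$ directly yields the triangle
\[
H^{0}(\tau_\phi^{\leq}(C[-1]))[0]\longrightarrow \tau_{\phi_{\leq -1}}^{>}(C[-1])\longrightarrow \tau_\phi^{>}(C[-1])\stackrel{+}{\longrightarrow},
\]
and then shifts by $[1]$. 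You instead rederive this comparison by hand: you apply the long exact sequence of $\tilde H_{\phi_{\leq -1}}$ to the $\mathcal U_\phi[1]$-truncation triangle $N\to C\to Y_\phi$, and then identify $\tilde H_{\phi_{\leq -1}}(N)$ with $H^{-1}(N)[1]$ by checking that the canonical truncation $\tau^{\leq -2}N\to N\to H^{-1}(N)[1]$ is the $(\mathcal U_{\phi_{\leq -1}}[1],\mathcal U_{\phi_{\leq -1}}^\perp[1])$-truncation. Your route is more self-contained (no external citation needed) at the cost of a longer argument; the paper's is terser but relies on \cite{AJS}. One minor remark: your justification that $H^{-1}(N)[1]\in\mathcal H_{\phi_{\leq -1}}$ could be stated more crisply---the co-aisle condition $H^{-1}(N)[0]\in\mathcal U_{\phi_{\leq -1}}^\perp$ holds simply because $\phi_{\leq -1}(i)=\emptyset$ for $i\geq 0$ and $\Ext^i$ vanishes for $i<0$, so the defining $\Hom$-conditions are all vacuous.
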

\begin{proof}
From Remark \ref{rem. truncation of phi}, we get that $(\tau^{\leq -1}(X_{\lambda}))=(X_{\lambda})$ is a direct system of $\mathcal{H}_{\phi_{\leq -1}}$. We consider the associated triangle given by the colimit-defining morphism:

$$\xymatrix{\coprod{X_{\lambda \mu}} \ar[r]^{g} & \coprod {X_{\lambda}} \ar[r] &Z \ar[r]^{+} &}$$
and let $\mathcal{U}$ denote either $\mathcal{U}_\phi$ or $\mathcal{U}_{\phi_{\leq -1}}$ in this paragraph and,  correspondingly, let $\mathcal{H}$ denote either $\mathcal{H}_\phi$ or $\mathcal{H}_{\phi_{\leq -1}}$. 
Since $\mathcal{H} =\mathcal{U}\cap\mathcal{U}^\perp [1]$ is closed under coproducts and we have inclusions $\mathcal{U} [1]\subseteq\mathcal{U}$ and $\mathcal{U}^\perp [1]\subseteq\mathcal{U}^\perp [2]$, it immediately follows that $Z\in\mathcal{U}\cap\mathcal{U}^\perp [2]$. Consider next the following triangle, given by truncation with respect to the t-structure $(\mathcal{U},\mathcal{U}^\perp [1])$:
$$\tau_\mathcal{U}(Z[-1])[1]\longrightarrow Z\longrightarrow\tau^{\mathcal{U}^\perp}(Z[-1])[1]\stackrel{+}{\longrightarrow} $$
We get that $\tau_\mathcal{U}(Z[-1])[1]\in\mathcal{U}[1]\cap\mathcal{U}^\perp [2]=\mathcal{H}[1]$ and $\tau^{\mathcal{U}^\perp}(Z[-1])[1]\in\mathcal{U}\cap\mathcal{U}^\perp [1]=\mathcal{H}$. By \cite{BBD}, we conclude that $\tau^{\mathcal{U}^\perp}(Z[-1])[1]=\text{Coker}_\mathcal{H}(g)=\varinjlim_\mathcal{H}X_\lambda$.

In particular, we get that $\limite{X_{\lambda}}=\tau_{\phi}^{>}(Z[-1])[1]$ and $\varinjlim_{\mathcal{H}_{\phi_{\leq -1}}}{X_{\lambda}}=\tau^{>}_{\phi_{\leq -1}}(Z[-1])[1]$. On the other hand, by \cite[Corollary 5.6]{AJS}, we have the following triangle:
$$\xymatrix{H^{0}(\tau_{\phi}^{\leq}(Z[-1]))[0] \ar[r] & \tau_{\phi_{\leq -1}}^{>}(Z[-1]) \ar[r] & \tau_{\phi}^{>}(Z[-1]) \ar[r]^{\hspace{0.7 cm}+} & }$$
The result follows by putting $T=H^{0}(\tau_{\phi}^{\leq}(Z[-1]))$ since  $\tau_{\phi}^{\leq}(Z[-1])\in \U$.
\end{proof}
\begin{lemma} \label{lem.domino-effect2}
Let $f=(f_\lambda :X_\lambda \flecha Y_\lambda
)_{\lambda\in\Lambda}$ be a morphism between the direct systems
$(X_\lambda )_{\lambda\in\Lambda}$ and $(Y_\lambda
)_{\lambda\in\Lambda}$ of $\D(R)$, and suppose that there are
integers $m\leq r$ such that $X_\lambda ,Y_\lambda\in \D^{[m,r]}(R)$,
for all $\lambda\in\Lambda$. If the induced map $\varinjlim
H^j(X_\lambda )\flecha \varinjlim H^j(Y_\lambda )$ is an
isomorphism, for all $m\leq j\leq r$, then the induced map
$\varinjlim \Hom_{\D(R)}(C[s],X_\lambda )\flecha \varinjlim
\Hom_{\D(R)}(C[s],Y_\lambda )$ is an isomorphism, for each finitely generated $R$-module 
$C$ and each integer $s$.
\end{lemma}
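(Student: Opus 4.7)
The plan is to induct on the ``amplitude'' $r-m$ of the complexes, reducing to the stalk complex case and then propagating via the canonical truncation triangles.

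For the base case $r=m$, every $X_\lambda$ is isomorphic in $\mathcal{D}(R)$ to the stalk complex $H^m(X_\lambda)[-m]$, and similarly for $Y_\lambda$. Therefore
$$\varinjlim \Hom_{\D(R)}(C[s], X_\lambda) \cong \varinjlim \Ext_R^{-m-s}(C, H^m(X_\lambda)),$$
and the analogous formula for $Y_\lambda$. Since $R$ is Noetherian and $C$ is finitely generated, $C$ admits a projective resolution by finitely generated (hence compact) projective modules, so each $\Ext_R^k(C,?)$ commutes with direct limits in $R\Mod$. Combining this with the hypothesis that $\varinjlim H^m(X_\lambda) \to \varinjlim H^m(Y_\lambda)$ is an isomorphism settles the base case.

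For the inductive step, the functoriality of the canonical truncation functors yields, for each $\lambda$, a morphism of triangles in $\mathcal{D}(R)$
$$\xymatrix{H^m(X_\lambda)[-m] \ar[r]\ar[d] & X_\lambda \ar[r]\ar[d]^{f_\lambda} & \tau^{>m}X_\lambda \ar[d]\ar[r]^{\hspace{0.4cm}+} & \\ H^m(Y_\lambda)[-m] \ar[r] & Y_\lambda \ar[r] & \tau^{>m}Y_\lambda \ar[r]^{\hspace{0.4cm}+} & }$$
compatible with the transition morphisms of the direct systems. Since $H^j(\tau^{>m}X_\lambda)=H^j(X_\lambda)$ for $j\geq m+1$, the induction hypothesis applies to the right-hand vertical morphism (the complexes now lie in $\mathcal{D}^{[m+1,r]}(R)$, and the cohomology-level hypothesis is inherited from our original assumption for $j\in[m+1,r]$). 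It guarantees that $\varinjlim \Hom_{\D(R)}(C[s],\tau^{>m}X_\lambda) \to \varinjlim \Hom_{\D(R)}(C[s],\tau^{>m}Y_\lambda)$ is an isomorphism for every integer $s$. The base case, applied to the direct system $(H^m(X_\lambda))$ and $(H^m(Y_\lambda))$, yields the same for the left-hand vertical morphism.

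To conclude, apply the cohomological functor $\Hom_{\D(R)}(C[s],?)$ to both triangles to obtain two long exact sequences in $Ab$, and then take $\varinjlim$ over $\Lambda$, which preserves exactness since $\Lambda$ is directed. The result is a commutative diagram with exact rows in which four out of every five consecutive vertical maps are isomorphisms (by the two reductions in the previous paragraph, applied to all shifts $s, s\pm 1$). The five-lemma then forces the central vertical map to be an isomorphism for each $s$, finishing the induction. The only genuinely delicate point is the book-keeping in the base case: one must verify the interchange of $\varinjlim$ and $\Ext_R^k(C,?)$, which is where the Noetherian hypothesis on $R$ and the finite generation of $C$ enter essentially.
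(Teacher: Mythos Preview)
Your proof is correct and follows essentially the same approach as the paper's: both argue by induction along canonical truncation triangles, reducing to the stalk case where the key input is that $\Ext_R^k(C,?)$ preserves direct limits for $C$ finitely generated over the Noetherian ring $R$. The only cosmetic difference is the direction of the truncation: the paper builds up via $\tau^{\leq m+k}$ (adding one cohomology degree at a time from below), whereas you peel off the lowest cohomology via $\tau^{>m}$ and induct on the amplitude $r-m$; both variants lead to the same five-lemma argument.
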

\begin{proof}
 Let $C$ be any finitely generated $R$-module. Due to the hypothesis
and the fact that the  functor $\Ext_R^i(C,?):R\Mod \flecha 
Ab$ preserves direct limits, for  each
$i\geq 0$, we have isomorphisms

\begin{center}
$\varinjlim\Hom_{\D(R)}(C[s],H^j(X_\lambda )
[-j])\cong\Hom_{\D(R)}(C[s],(\varinjlim H^j(X_\lambda))
[-j])\stackrel{\cong}{\longrightarrow}\Hom_{\D(R)}(C[s],(\varinjlim
H^j(Y_\lambda)) [-j])\cong \varinjlim\Hom_{\D(R)}(C[s],H^j(Y_\lambda
) [-j])$,
\end{center}
for all  $s,j\in\mathbb{Z}$.\\

Each $X_\lambda$ is an iterated finite extension of $H^m(X_\lambda
)[-m]$, $H^{m+1}(X_\lambda )[-m-1],\dots,H^r(X_\lambda )[-r]$ and
similarly for $Y_\lambda$. Using this and the previous paragraph,
one easily proves by induction on $k\geq 0$, that the induced map

\begin{center}
$\varinjlim\Hom_{\D(R)}(C[s],\tau^{\leq m+k}(X_\lambda)
)\flecha \varinjlim\Hom_{\D(R)}(C[s],\tau^{\leq m+k}(Y_\lambda)
)$
\end{center}
is an isomorphism, for all  $s\in\mathbb{Z}$.
The case $k=r-m$ gives the desired result.
\end{proof}

\begin{lemma} \label{lem.Yoneda lemma}
Let $(X_\lambda )_{\lambda}$ be a direct system in $\Ht$, let $L'$
be any object of $\D(R)$, which is viewed as a constant
$\Lambda$-direct system, and let $(f_\lambda
:X_\lambda\flecha  L')_{\lambda\in\Lambda}$ be a morphism of
direct systems in $\D(R)$. If the induced morphism $\varinjlim
H^j(f_\lambda ):\varinjlim H^j(X_\lambda )\flecha H^j(L')$
is an isomorphism, for all $j\in\mathbb{Z}$, then $L'$ is in $\Ht$.
\end{lemma}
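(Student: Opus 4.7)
The plan is to prove $L'\in\Ht=\U\cap\U^{\perp}[1]$ by verifying both membership conditions directly, using the isomorphisms $\varinjlim H^{j}(X_{\lambda})\cong H^{j}(L')$ to transport information from the $X_{\lambda}$ to $L'$. The main mechanism for the coaisle condition will be Lemma \ref{lem.domino-effect2}, which converts a homological hypothesis into a Hom-theoretic one, provided we can guarantee uniform cohomological bounds.

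First I would establish that $L'$ lives in $\D^{[-n,0]}(R)$. Since $\phi$ is both left bounded and eventually trivial, Lemma \ref{lem. bounded heart} gives $\Ht\subseteq\D^{[-n,0]}(R)$, so $H^{j}(X_{\lambda})=0$ whenever $j\notin[-n,0]$; hence $\varinjlim H^{j}(X_{\lambda})=0$ for such $j$, and by hypothesis $H^{j}(L')=0$ as well. Next, to verify $L'\in\U$, I would appeal to the explicit description of the aisle from Proposition \ref{prop.sp-filtrations versus t-structures}: it suffices to show $\Supp(H^{j}(L'))\subseteq\phi(j)$ for each $j$. But $X_{\lambda}\in\U$ implies $H^{j}(X_{\lambda})\in\T_{j}$, and Remark \ref{remark class closed direct.limit} says $\T_{j}$ is closed under direct limits in $R\Mod$. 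Therefore $H^{j}(L')\cong\varinjlim H^{j}(X_{\lambda})\in\T_{j}$, as required.

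To check $L'\in\U^{\perp}[1]$, I would observe that, since $\U$ is compactly generated by the stalks $R/\mathbf{p}[-i]$ with $\mathbf{p}\in\phi(i)$, this amounts to the vanishing of $\Hom_{\D(R)}(R/\mathbf{p}[-i+k+1],L')$ for every $i\in\mathbb{Z}$, $k\geq 0$ and $\mathbf{p}\in\phi(i)$. I would apply Lemma \ref{lem.domino-effect2} with $C=R/\mathbf{p}$, $s=-i+k+1$, $m=-n$, $r=0$ to the morphism of direct systems $(f_{\lambda}:X_{\lambda}\to L')_{\lambda\in\Lambda}$ (viewing $L'$ as a constant system); its hypotheses are met by the homological assumption and the boundedness established above. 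This yields an isomorphism $\Hom_{\D(R)}(R/\mathbf{p}[-i+k+1],L')\cong\varinjlim\Hom_{\D(R)}(R/\mathbf{p}[-i+k+1],X_{\lambda})$. Each term vanishes: because $\phi$ is decreasing we have $\mathbf{p}\in\phi(i)\subseteq\phi(i-k)$, so $R/\mathbf{p}[-i+k]\in\U$, and hence $R/\mathbf{p}[-i+k+1]\in\U[1]\subseteq\U$; combined with $X_{\lambda}\in\U^{\perp}[1]$ this gives the required vanishing.

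The only delicate point is ensuring that the hypotheses of Lemma \ref{lem.domino-effect2} apply to $L'$—that is, that $L'$ has cohomology concentrated in a bounded range common to the $X_{\lambda}$. This is exactly where the eventually trivial hypothesis on $\phi$ (standing throughout this subsection) is crucial: without it the heart would not be contained in a bounded strip $\D^{[-n,0]}(R)$, and the constant target $L'$ could fail to satisfy the amplitude condition that Lemma \ref{lem.domino-effect2} demands.
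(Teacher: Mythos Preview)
Your proof is correct and follows essentially the same approach as the paper's: verify $L'\in\U$ via closure of $\T_j$ under direct limits, then apply Lemma~\ref{lem.domino-effect2} to transfer the vanishing of $\Hom_{\D(R)}(R/\mathbf{p}[-j+1],X_\lambda)$ to $L'$. You are in fact slightly more careful than the paper in making explicit that $L'\in\D^{[-n,0]}(R)$, which is needed for Lemma~\ref{lem.domino-effect2} to apply to the constant target system; the paper leaves this implicit. One cosmetic remark: in your coaisle verification the inclusion ``$\U[1]\subseteq\U$'' is superfluous---what you actually use is $R/\mathbf{p}[-i+k+1]\in\U[1]$ together with $X_\lambda\in\U^\perp[1]$, and the paper streamlines this by noting directly that it suffices to check $\Hom_{\D(R)}(R/\mathbf{p}[-j],L'[-1])=0$ for $\mathbf{p}\in\phi(j)$, since the decreasing condition on $\phi$ makes the extra shifts redundant.
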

\begin{proof}
For each $j\in\mathbb{Z}$, we  have that $H^j(L')\in\T_j$ since
$\T_j$ is closed under direct limits in $R\Mod $. It follows that
$L'\in\mathcal{U}_\phi$. On the other hand, by the previous lemma
and the fact that $X_\lambda\in \D^{[-n,0]}(R)$ for each
$\lambda\in\Lambda$, we know that the canonical map

\begin{center}
$\varinjlim \Hom_{\D(R)}(C[s],X_\lambda )\flecha 
\Hom_{\D(R)}(C[s],L')$
\end{center}
is an isomorphism, for each finitely generated module $C$ and each
$s\in\mathbb{Z}$. When taking $s=-j+1$ and $C=R/\mathbf{p}$, with
$\mathbf{p}\in\phi (j)$, we get that
$\Hom_{\D(R)}(R/\mathbf{p}[-j],L'[-1])=0$, for all $j\in\mathbb{Z}$
and all $\mathbf{p}\in\phi (j)$. Therefore we get that
$L'[-1]\in\mathcal{U}_\phi^\perp$, so that
$L'\in\mathcal{U}_\phi\cap\mathcal{U}_\phi^\perp [1]=\Ht$.
\end{proof}

The crucial step for our desired induction argument is the following. 

\begin{lemma}\label{lem. domino effect}
If $(X_{\lambda})$ is a direct system in $\Ht \cap \D^{\leq k+1}(R)$
such that the canonical morphism
\begin{center}
$\varphi_{j}:\varinjlim{H^{j}(X_{\lambda}) \flecha H^{j}(\limite{X_{\lambda}})}$
\end{center}
is an isomorphism, for all $j\leq k$, then $\varphi_{j}$ is an isomorphism, for all $j\in \mathbb{Z}$.
\end{lemma}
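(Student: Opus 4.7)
The plan is to realize $\limite X_\lambda$ as the $\U^\perp$-truncation of an explicit mapping cone and to show that the resulting ``error term'' vanishes. Form the colimit-defining triangle $\coprod X_{\lambda\mu}\xrightarrow{g}\coprod X_\lambda\to C\stackrel{+}{\to}$ in $\D(R)$. By the construction used in the proof of Lemma \ref{lem. comparate sp-filtration}, this yields a triangle $A[1]\to C\to B\stackrel{+}{\to}$ with $A\in\Ht$ and $B=\limite X_\lambda$. Since each $X_\lambda\in\D^{\leq k+1}(R)$, a routine computation gives $H^j(C)=\varinjlim H^j(X_\lambda)$ for $j\leq k+1$ and $H^j(C)=0$ for $j>k+1$. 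It therefore suffices to show that $A=0$, for then $B\cong C$ and every $\varphi_j$ is automatically an isomorphism.

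The analysis of $A$ rests on two ingredients. First, feeding the hypothesis that $\varphi_j$ is an isomorphism for all $j\leq k$ into the long exact sequence
$$H^{j+1}(A)\to H^j(C)\xrightarrow{\varphi_j}H^j(B)\to H^{j+2}(A)\to H^{j+1}(C)$$
forces $H^{j+1}(A)\to H^j(C)$ to be zero and $H^{j+2}(A)\to H^{j+1}(C)$ to be injective for each $j\leq k$; combining these two conditions at consecutive indices yields $H^r(A)=0$ for all $r\leq k+1$. Second, I will establish a \emph{top-is-torsionfree} principle: if $Y\in\Ht$ and $Y\in\D^{\leq s}$ then $H^s(Y)\in\F_{s+1}$. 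The argument applies $\Hom(R/\p[-s],?)$ to the truncation triangle $\tau^{<s}(Y)\to Y\to H^s(Y)[-s]\to$, where the left outer term vanishes because $Y[-1]\in\U^\perp$ and the right outer term vanishes for degree reasons since $\tau^{<s}(Y)[s+1]\in\D^{<0}$. A refinement of this argument, using Remark \ref{rem. truncation of phi} and Proposition \ref{prop. description of stalk}, upgrades the conclusion for each $X_\lambda$ from $H^{k+1}(X_\lambda)\in\F_{k+2}$ to $H^{k+1}(X_\lambda)\in\TF_{k+1}$, and hence by Corollary \ref{cor. TF closed limit} to $\varinjlim H^{k+1}(X_\lambda)\in\TF_{k+1}$.

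The closing argument assumes $A\neq 0$ and lets $s$ be its top nonzero cohomological degree; from Step 1 we have $s\geq k+2$, and the LES identifies $H^r(B)\cong H^{r+2}(A)$ for all $r\geq k+2$. In each of three subcases one pins $H^s(A)$ inside an intersection of the form $\T\cap\F$ that must be zero. The easy cases are $s=k+2$ (where $H^{k+2}(A)\hookrightarrow\varinjlim H^{k+1}(X_\lambda)\in\F_{k+2}$ intersected with $\T_{k+2}$ is zero) and $s\geq k+4$ (where $H^s(A)\cong H^{s-2}(B)$ is the top of $B$, so the top-torsionfree principle places it in $\F_{s-1}\cap\T_s\subseteq\F_{s-1}\cap\T_{s-1}=0$). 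The main obstacle is the intermediate case $s=k+3$: here $B\in\D^{\leq k+1}$ has top at degree $k+1$, and the short exact sequence $0\to\varinjlim H^{k+1}(X_\lambda)\to H^{k+1}(B)\to H^{k+3}(A)\to 0$ does not split for any soft reason. The key is to invoke the Ext-vanishing $\Ext^1_R(\T_{k+3},\TF_{k+1})=0$ coming from Proposition \ref{prop. description of stalk} to split it, which makes $H^{k+3}(A)$ a direct summand of $H^{k+1}(B)\in\F_{k+2}\subseteq\F_{k+3}$ and so places it in $\T_{k+3}\cap\F_{k+3}=0$. This is precisely why the stronger conclusion $\TF_{k+1}$ (not merely $\F_{k+2}$) for $\varinjlim H^{k+1}(X_\lambda)$ is indispensable.
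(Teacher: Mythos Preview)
Your proof is correct and takes a genuinely different route from the paper's.

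Both arguments begin identically: form the cone $C$ of the colimit-defining morphism and use the triangle $A[1]\to C\to B$ with $A=\Ker_{\Ht}(g)$ and $B=\limite X_\lambda$. From there the two diverge. The paper does \emph{not} attempt to show $A=0$; instead it applies the octahedral axiom to produce an auxiliary object $L'$ sitting in a triangle $\tau^{\leq k+1}(A)[1]\to C\to L'$, proves directly that the comparison maps $\psi_j:\varinjlim H^j(X_\lambda)\to H^j(L')$ are isomorphisms, and then invokes Lemma~\ref{lem.Yoneda lemma} (a Yoneda-type criterion using $\Hom$'s from stalk complexes $R/\p[s]$) to conclude $L'\in\Ht$, whence $L'\cong L$. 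Your argument bypasses $L'$ and Lemma~\ref{lem.Yoneda lemma} entirely, showing $A=0$ by a top-degree case analysis that exploits the torsion-theoretic structure: the ``top-is-torsionfree'' principle $H^s(Y)\in\F_{s+1}$ for $Y\in\Ht\cap\D^{\leq s}$, together with the sharpened observation $H^{k+1}(X_\lambda)\in\TF_{k+1}$ (not merely $\F_{k+2}$), which is exactly what makes the $\Ext^1$-splitting work in the delicate case $s=k+3$.

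What each approach buys: the paper's method is more uniform---one octahedron, one homology chase, one appeal to Lemma~\ref{lem.Yoneda lemma}---but depends on that auxiliary lemma. Your method is more self-contained and arguably more transparent about \emph{why} the result holds (it isolates precisely where each torsion condition is consumed), at the price of a three-way case split. Note that your case $s=k+2$ in fact shows $H^{k+2}(A)=0$ unconditionally, which is what justifies starting the short exact sequence in the $s=k+3$ case with a zero on the left; you use this implicitly and it is worth making explicit.
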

\begin{proof}
Let $g:\coprod X_{\lambda \mu} \flecha \coprod X_{\lambda}$ be the
colimit-defining morphism and put $L:=\limite{X_{\lambda}}$. By
\cite{BBD}, there exist a $M\in \Ht$ (in fact $M=\Ker_{\Ht}(g)$) and
a diagram of the following form, where the vertical and horizontal sequences
are both triangles:

$$\xymatrix{&& M[1]\ar[d] \\ \coprod{X_{\lambda \mu}} \ar[r]^{g} & \coprod{X_{\lambda}} \ar[r] & Z \ar[r]^{+} \ar[d] & \\ && L \ar[d]^{+} \\ &&& }.$$
By using the octahedral axiom, we then get another commutative
diagram, where rows and columns are triangles:

$$\xymatrix{\tau^{\leq k+1}(M)[1] \ar[r] \ar@{=}[d] & M[1] \ar[r] \ar[d] & \tau^{>k+1}M[1] \ar[r]^{\hspace{0.7 cm}+} \ar[d] & \\ \tau^{\leq k+1}(M)[1] \ar[r] \ar[d] & Z \ar[r] \ar[d] & L^{'} \ar[r]^{+} \ar[d] & \\ 0 \ar[d]^{+} \ar[r] & L \ar@{=}[r] \ar[d]^{+} & L\ar[r]^{+} \ar[d]^{+} & \\&&&&}$$

We now use the exact sequences of homologies associated to the
different triangles appearing in these diagrams. When applied to the
horizontal triangle in the first diagram, we get an isomorphism
$\varinjlim H^{k+1}(X_\lambda )\cong\Coker
(H^{k+1}(g))\iso H^{k+1}(Z)$ and get
that $Z\in \D^{\leq k+1}(R)$. When applied to the central horizontal
triangle in the second diagram, we get another isomorphism
$H^{k+1}(Z)\iso H^{k+1}(L')$ and we also
get that $L'\in \D^{[-n-1,k+1]}(R)$. But, when applied to the right
vertical triangle of the second diagram, we get that
$H^{-n-1}(L')=0$ since $L\in\Ht\subseteq \D^{[-n,0]}(R)$ (see Lemma
\ref{lem. bounded heart}). It follows that  $L'\in \D^{[-n,k+1]}(R)$.\\

Note that, when viewing $Z$ and $L'$ as constant $\Lambda$-direct
systems in $\D(R)$, we get morphisms of direct systems $(X_\lambda
\longrightarrow Z\longrightarrow L')$, which yield a morphism
$\psi_j:\varinjlim H^j(X_\lambda)\longrightarrow H^j(L')$ in $\Mod
R$, for each $j\in\mathbb{Z}$. We claim that $\psi_j$ is an
isomorphism, for all $j\in\mathbb{Z}$. The previous paragraph proves
the claim for $j=k+1$ and the fact that $X_\lambda ,L'\in
\D^{[-n,k+1]}(R)$ reduces the proof to the case when
$j\in\{-n,-n+1,\dots,k\}$. But, when we apply the exact sequence of
homologies to the right vertical triangle of the second diagram
above, we get an isomorphism $H^j(L')\cong H^j(L)$, for $-n\leq
j<k$, and a monomorphism $H^k(L')\monic H^k(L)$. The
hypothesis of the present lemma then gives that $\psi_j$ is an
isomorphism, for all $j<k$, and a composition of morphisms
$\varinjlim H^k(X_\lambda)\flecha  H^k(L')\flecha 
H^k(L)$, which is an isomorphism and whose second arrow is a
monomorphism. It then follows that both arrows in this composition
are isomorphisms, thus settling our claim.\\

Once we know that $\psi_j$ is an isomorphism, for all
$j\in\mathbb{Z}$, Lemma \ref{lem.Yoneda lemma} says that $L'\in\Ht$.
Then we have that $L'[-1]\in\mathcal{U}_\phi^\perp$ and, using the
central horizontal triangle of the second diagram above and the fact
that $Z\in\mathcal{U}_\phi^\perp [2]$, we deduce that $\tau^{\leq
k+1}M\in\mathcal{U}_\phi^\perp [1]$. It follows that $\tau^{\leq
k+1}M\in\Ht$ since, due to the fact that $M\in\Ht$, we have that
$\Supp (H^j(\tau^{\leq k+1}M))\subset\phi (j)$, for all
$j\in\mathbf{Z}$. Using now the explicit calculation of
$\Coker_{\Ht} (g)$ (see \cite{BBD}), we then get that
$L'\cong\Coker_{\Ht}(g)=L$ in $\Ht$.
\end{proof}

\begin{teor}\label{teo. AB5 finite}
Let  $\phi$ be a left bounded eventually trivial sp-filtration of $\Spec (R)$. The classical cohomological
functor $H^{k}:\Ht \flecha R\Mod$ preserves direct limits, for each
integer $k$. In particular, $\Ht$ is an AB5 abelian category.
\end{teor}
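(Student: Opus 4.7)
The plan is to argue by induction on the length $L(\phi) = n+1$, where $\phi$ is in the normalized form of the subsection so that $\mathcal{H}_\phi \subseteq \mathcal{D}^{[-n,0]}(R)$. The base case $n=0$ is immediate: Example \ref{exem.TF for largest degrees} gives $\mathcal{H}_\phi = \mathcal{T}_0[0]$ via the stalk functor, and since $\mathcal{T}_0$ is closed under direct limits in $R\text{-Mod}$ (Remark \ref{remark class closed direct.limit}), those of $\mathcal{H}_\phi$ agree with those of $R\text{-Mod}$ under the identification, so $H^0$ preserves them trivially.

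For the inductive step, I assume the statement for every left bounded eventually trivial sp-filtration of length $\leq n$, and let $(X_\lambda)$ be a direct system in $\mathcal{H}_\phi$ with $L := \varinjlim_{\mathcal{H}_\phi} X_\lambda$ and $\varphi_j: \varinjlim H^j(X_\lambda) \to H^j(L)$ the canonical map. Since $\mathcal{H}_\phi \subseteq \mathcal{D}^{\leq 0}(R)$, Lemma \ref{lem. domino effect} applied with $k=-1$ reduces the task to showing that $\varphi_j$ is an isomorphism for every $j \leq -1$. For these degrees, I pass to the truncated system $(\tau^{\leq -1}(X_\lambda))$, which by Remark \ref{rem. truncation of phi} lies in $\mathcal{H}_{\phi_{\leq -1}}$. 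Since $\phi_{\leq -1}$ is left bounded eventually trivial of length $n$, the inductive hypothesis gives $H^j(L') = \varinjlim H^j(X_\lambda)$ for all $j \leq -1$, where $L' := \varinjlim_{\mathcal{H}_{\phi_{\leq -1}}} \tau^{\leq -1}(X_\lambda)$, and Lemma \ref{lem. comparate sp-filtration} produces a triangle $T[1] \to L' \to M \to +$ in $\mathcal{D}(R)$ with $T \in \mathcal{T}_0$ and $M := \varinjlim_{\mathcal{H}_\phi} \tau^{\leq -1}(X_\lambda)$. Meanwhile, the canonical short exact sequence $0 \to \tau^{\leq -1}(X_\lambda) \to X_\lambda \to H^0(X_\lambda)[0] \to 0$ in $\mathcal{H}_\phi$ (legitimate because $H^0(X_\lambda) \in \mathcal{T}_0 = \mathcal{TF}_0$) is functorial in $\lambda$; by right-exactness of $\varinjlim_{\mathcal{H}_\phi}$ and Proposition \ref{prop. stalk heart}, it yields an exact sequence $M \xrightarrow{u} L \to (\varinjlim H^0(X_\lambda))[0] \to 0$ in $\mathcal{H}_\phi$.

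Writing $K$ and $I$ for the kernel and image of $u$ taken in $\mathcal{H}_\phi$, the long exact cohomology sequences of the three triangles $T[1] \to L' \to M \to +$, $K \to M \to I \to +$, and $I \to L \to (\varinjlim H^0(X_\lambda))[0] \to +$ together express $H^j(L)$ for $j \leq -1$ in terms of $\varinjlim H^j(X_\lambda)$, the image of $T$ in $H^{-1}(L')$, and $H^\ast(K)$. The main obstacle will be to show that both extra contributions vanish, i.e.\ that $K = 0$ in $\mathcal{H}_\phi$ and that the connecting map $T \to H^{-1}(L')$ is zero, so that the comparison collapses to $\varphi_j$ being an isomorphism. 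For this I plan to combine Proposition \ref{prop.gathering properties for bounded} (which already yields $\varphi_0$ iso, $\varphi_{-1}$ epi, and $\mathrm{Supp}(\mathrm{Ker}\,\varphi_j) \subseteq \phi(j+1)$), Lemma \ref{lem. last homology}(3) (forcing $H^m(L) \in \mathcal{F}_{m+1}$ at the smallest nonzero cohomology degree $m$), and Lemma \ref{lem. T[0] closed for subobjects} (which confines subobjects in $\mathcal{H}_\phi$ of any stalk at $0$ coming from $\mathcal{T}_0$ back to $\mathcal{T}_0[0]$). Combined, these constraints place $K$ and $\mathrm{Im}(T \to H^{-1}(L'))$ into subcategories where the torsion/torsion-free dichotomy leaves them no room but zero.

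Once $H^k$ is known to preserve direct limits for every $k$, AB5 for $\mathcal{H}_\phi$ follows by a standard diagram chase. Given a direct system of short exact sequences in $\mathcal{H}_\phi$, the associated direct system of triangles in $\mathcal{D}(R)$ produces, after taking $\varinjlim$ (which is exact in $R\text{-Mod}$) of the classical long exact cohomology sequences, a long exact sequence that by hypothesis identifies with the cohomology of the limits taken in $\mathcal{H}_\phi$. Comparing with the long exact sequences coming from the image/kernel decomposition of the direct-limit sequence in $\mathcal{H}_\phi$ forces the kernel of the induced map to have vanishing classical cohomology in every degree; since $\mathcal{H}_\phi \subseteq \mathcal{D}^{[-n,0]}(R)$, this kernel is zero, so direct limits are exact.
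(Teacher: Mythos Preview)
Your overall induction scheme matches the paper's, but the heart of the inductive step --- the paragraph beginning ``The main obstacle will be\ldots'' --- does not go through as you suggest, and it is exactly here that the paper needs a considerably more elaborate argument than the one-shot reduction you propose. First, $K=\Ker_{\mathcal{H}_\phi}(u)$ is not zero in general: the paper shows only that $K\cong T'[0]$ for some $T'\in\mathcal{T}_0$, and establishing even this requires a localization argument you omit (one localizes at each $\mathbf{p}\in\Spec(R)\setminus\phi(0)$, where $\phi_\mathbf{p}$ has length $\leq n$ so the induction hypothesis applies to $\mathcal{H}_{\phi_\mathbf{p}}$, forcing $\Supp(H^j(K))\subseteq\phi(0)$ for all $j$; only then does a repeated use of Lemma~\ref{lem. last homology}(3) pin $K$ down to a stalk in $\mathcal{T}_0[0]$). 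Second, there is no reason for the connecting map $T\to H^{-1}(L')=\varinjlim H^{-1}(X_\lambda)$ to vanish: the modules $H^{-1}(X_\lambda)$ lie in $\mathcal{T}_{-1}\supseteq\mathcal{T}_0$ and may carry nontrivial $\mathcal{T}_0$-torsion, so none of the tools you list forces this map to zero. Third, even granting both of your claims, the triangle $T[1]\to L'\to M\stackrel{+}{\to}$ would yield only a short exact sequence $0\to H^{-2}(L')\to H^{-2}(M)\to T\to 0$, so that $\varphi_{-2}$ would be merely monic with cokernel $T$; you could not then invoke Lemma~\ref{lem. domino effect} with $k=-1$ for the original system.

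The paper avoids these obstructions by a three-stage reduction before reaching the general case. Step~1 treats systems in $\mathcal{H}_\phi\cap\mathcal{D}^{\leq -2}(R)$: here $H^{-1}(L')=0$, so Lemma~\ref{lem. comparate sp-filtration} applied directly to $(X_\lambda)$ gives isomorphisms $\varphi_k$ for $k\leq -3$ and $k\geq -1$, and Lemma~\ref{lem. domino effect} closes degree $-2$. Step~2 treats systems in $\mathcal{H}_\phi\cap\mathcal{D}^{\leq -1}(R)$ via an octahedral decomposition that separates out the $\mathcal{T}_0$-torsion part of $H^{-1}(X_\lambda)$ and reduces to Step~1. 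Only then does Step~3 handle arbitrary systems by applying Step~2 to $(\tau^{\leq -1}(X_\lambda))$, combined with the localization argument above to control $K$. Your direct passage from the full system to $\phi_{\leq -1}$ does not carry enough information through degrees $-2$ and $-1$ to replace this layered argument.
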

\begin{proof}
The final part of the statement follows from \cite[Proposition
3.4]{PS}. Without loss of generality, if $L(\phi )=n+1$ we will
assume that $\phi$ is the filtration $\cdots =\phi (-n-k)=\cdots=\phi
(-n)\supsetneq\phi (-n+1)\supseteq\phi
(-n+2) \cdots \supseteq\phi(0)\supsetneq\phi (-1)= \cdots=\emptyset$. The
proof  will be done   by induction on the length $L(\phi )=n+1$ of
the filtration.  The cases when $L(\phi )=0,1$ are covered by
Proposition \ref{prop.gathering properties for bounded}.  We then
assume in the sequel that $n\geq 1$, that $L(\phi )=n+1$ and that
the result is true for left bounded eventually trivial sp-filtrations of length $\leq n$.

It is important for the reader to note that if $X\in\mathcal{H}_\phi$ then, by Remark \ref{rem. truncation of phi}, we have that $\tau^{\leq 2}X$ and $\tau^{\leq -1}X$ are in $\mathcal{H}_\phi\cap\mathcal{H}_{\phi_{\leq -1}}$. Moreover, by Example \ref{exem.TF for largest degrees}, we also have that the stalk complexes $H^0(X)[0]$ and $(1:t_0)(H^{-1}(X))[1]$ are in $\mathcal{H}_\phi$. We will freely use these facts throughout the rest of the proof.   

Let now $(X_\lambda )_{\lambda\in\Lambda}$ be a direct system in $\Ht$.
 Proving that the canonical map $\varinjlim H^k(X_\lambda )\longrightarrow H^k(\varinjlim_{\mathcal{H}_\phi}X_\lambda )$ is an isomorphism, for all $k\in\mathbb{Z}$, will be done in  several steps.

\vspace*{0.3cm}

\emph{Step 1: The result is true when $X_\lambda\in \D^{\leq -2}(R)$,
for all $\lambda\in\Lambda$:}

By Remark \ref{rem. truncation of phi}, we know that $(X_{\lambda})$
is a direct system of $\Ht \cap \mathcal{H}_{\phi_{\leq -1}}$ and
from Lemma \ref{lem. comparate sp-filtration} we get a triangle of
the form:
\begin{center}
$\xymatrix{T[1] \ar[r] & \varinjlim_{\mathcal{H}_{\phi_{\leq -1}}} X_{\lambda} \ar[r] & \limite{X_{\lambda}} \ar[r]^{\hspace{0.6 cm}+}  &},$
\end{center}
with $T\in\T_0$.

Since  $L(\phi_{\leq -1})\leq n$, the induction hypothesis gives
that the canonical map $\varinjlim{H^{k}(X_{\lambda})} \flecha
H^{k}(\varinjlim_{\mathcal{H}_{\phi_{\leq -1}}} X_{\lambda})$ is an
isomorphism, for all integers $k$. Note that this implies, in particular, that $H^{k}(\varinjlim_{\mathcal{H}_{\phi_{\leq -1}}} X_{\lambda})=0$, for $k\geq -1$.  From the
sequence of homologies applied to the previous triangle we then get that
the canonical maps
$\varinjlim{H^{k}(X_{\lambda})}\iso
H^{k}(\varinjlim_{\mathcal{H}_{\phi_{\leq -1}}}
X_{\lambda})\iso H^{k}(\varinjlim_{\Ht}
X_{\lambda})$ are isomorphisms, for $k\leq -3$  and  all $k\geq -1$.
By Lemma \ref{lem. domino effect}, we conclude that the canonical
morphism
 $\varinjlim{H^{-2}(X_{\lambda})}\cong H^{-2}(\varinjlim_{\Ht} X_{\lambda})$ is also an isomorphism. \\

\emph{Step 2: The result is true when $X_\lambda\in \D^{\leq -1}(R)$,
for all $\lambda\in\Lambda$:}

Using the octahedral axiom, for each $\lambda$ we obtain the following
commutative diagram:

$$\xymatrix{ &&& &\\&&& t_{0}(H^{-1}(X_{\lambda}))[1] \ar[dd] \ar[ur]^{+} \\ & Y_{\lambda} \ar[dr] \ar[rru] && \\ \tau^{\leq -2}(X_{\lambda}) \ar[rr] \ar[ru] && X_{\lambda} \ar[rd] \ar[r] & H^{-1}(X_{\lambda})[1] \ar[r]^{\hspace{0.6 cm}+} \ar[d] & \\ &&& (1:t_{0})(H^{-1}(X_{\lambda}))[1] \ar[d]^{+} \ar[rd]^{+} \\ &&&&&}$$

From the triangle $\tau^{\leq -2}(X_\lambda) \flecha
Y_\lambda \flecha t_0(H^{-1}(X_{\lambda}))[1] \xymatrix{\ar[r]^{+} & }$ we get
that $Y_\lambda\in\U$, while from the triangle
$(1:t_0)(H^{-1}(X_{\lambda})) \flecha
Y_\lambda \flecha X_\lambda \xymatrix{\ar[r]^{+} & }$ and
the fact that $(1:t_0)(H^{-1}(X_{\lambda}))\in\T\F_{-1}$ (see
Example \ref{exem.TF for largest degrees}) we get that
$Y_\lambda\in\U^\perp [1]$. We then have that
 $Y_{\lambda} \in \Ht$, for each $\lambda\in\Lambda$. Moreover, we get direct systems of exact sequences in $\Ht$:

$$ \xymatrix{0 \ar[r] & t_0(H^{-1}(X_\lambda))[0] \ar[r] & \tau^{\leq -2}(X_\lambda) \ar[r] &  Y_\lambda \ar[r] & 0  & \ar@{}[d]^{(\ast)}\\
0\ar[r]  & Y_\lambda \ar[r] &  X_\lambda \ar[r] &  (1:t_0)(H^{-1}(X_\lambda ))[1] \ar[r] & 0 & }$$

Applying the direct limit functor and putting $T:=t_0(\varinjlim
H^{-1}(X_\lambda ))\cong\varinjlim t_0(H^{-1}(X_\lambda ))$, we get
an exact sequence in $\Ht$
\begin{center}
$ T[0]\flecha \varinjlim_{\Ht}(\tau^{\leq -2}(X_\lambda)) \xymatrix{\ar[r]^{p} &}\varinjlim_{\Ht}(Y_\lambda)\flecha 
0.$
\end{center} 
If $W:=\text{Ker}_{\Ht}(p)$ then, by Lemma \ref{lem. T[0]
closed for subobjects}, we have an exact sequence $0 \flecha
T'[0]\flecha T[0]\flecha W\flecha 0$ in $\Ht$,
with $T'\in\T_0$. This implies that $W\in \D^{[-1,0]}(R)$ and that
$H^k(W)\in\T_0$, for $k=-1,0$. But assertion 3 of Lemma \ref{lem.
last homology} then says that $H^{-1}(W)\in\F_0$, and this implies
that $W\cong H^0(W)[0]\in\T_0[0]$. We then have isomorphisms
$H^k(X_\lambda )=H^k(\tau^{\leq -2}(X_\lambda))\cong H^k(Y_\lambda )$
and $H^k(\varinjlim_{\Ht}(\tau^{\leq -2}(X_\lambda) ))\cong
H^k(\varinjlim_{\Ht}(Y_\lambda ))$, for all $k\leq -2$. By step 1 of
this proof, we get an isomorphism
\begin{center}
$\varinjlim (H^k(Y_\lambda ))\cong\varinjlim (H^k(X_\lambda ))\iso H^k(\limite (\tau^{\leq -2}(X_\lambda )))\cong H^k(\limite (Y_\lambda)), $
\end{center}
for all $k\leq -2$. Taking into account that $Y_\lambda\in\Ht\cap\mathcal D^{\leq -1}(R)$, for each $\lambda\in\Lambda$,   Lemma \ref{lem. domino effect} tells us that  the canonical map $\varinjlim (H^k(Y_\lambda
))\longrightarrow H^k(\limite Y_\lambda)$ is an isomorphism, for
all $k\in\mathbb{Z}$. \\

Bearing in mind that $L(\phi_{\leq -1})\leq n$,  Lemma \ref{lem. comparate sp-filtration} and the
 induction hypothesis imply that we have isomorphisms
 $\varinjlim H^k(X_\lambda )\cong H^k(\varinjlim_{\mathcal{H}_{\phi_{\leq -1}}}X_\lambda)\cong H^k(\limite
 X_\lambda)$, for all $k\leq -3$. They also imply the following facts:

 \begin{enumerate}
 \item $\varinjlim H^{-2}(X_\lambda )\cong  H^{-2}(\varinjlim_{\mathcal{H}_{\phi_{\leq -1}}}X_\lambda)\flecha H^{-2}(\limite X_\lambda
 )$ is a monomorphism with cokernel in $\T_0$;
\item $\varinjlim H^{-1}(X_\lambda )\cong  H^{-1}(\varinjlim_{\mathcal{H}_{\phi_{\leq -1}}}X_\lambda)\flecha H^{-1}(\limite X_\lambda
 )$ is an epimorphism with kernel in $\T_0$.
 \end{enumerate}

We next apply the direct limit functor to the second direct system
in (*) to get an exact sequence in $\Ht$:

$$\xymatrix{\limite{Y_{\lambda}} \ar[r]^{\alpha} & \limite{X_{\lambda}} \ar[r] & \varinjlim{(1:t_{0})(H^{-1}(X_{\lambda}))}[1] \ar[r] & 0}$$

We put $Z:=\Imagen(\alpha)$ and, using Proposition \ref{prop. stalk
heart} and Example \ref{exem.TF for largest degrees},   consider the
two induced short exact sequences in $\Ht$:

$$\xymatrix{0 \ar[r] & W^{'} \ar[r] & \limite{Y_{\lambda}} \ar[r] & Z \ar[r] & 0 \ \ \text{ and } \ \    0 \ar[r] & Z \ar[r] & \limite{X_{\lambda}} \ar[r] & \varinjlim{(1:t_0)(H^{-1}(X_{\lambda}))}[1] \ar[r] & 0}$$
From the first one we get that $Z\in \D^{\leq -1}(R)$ and, using the
exact sequence of homologies for the second one, we get that
$H^k(Z)\cong H^k(\limite X_\lambda)$, for all $k\leq -2$. On the
other hand, when applying the exact sequence of homologies to the
triangle $\tau^{\leq -2}(X_\lambda) \flecha
Y_\lambda \flecha t_0(H^{-1}(X_\lambda
))[1]\xymatrix{\ar[r]^{+} & }$, we get that
 the canonical map $H^k(Y_\lambda
)\flecha H^k(X_\lambda)$ is an isomorphism, for all
$\lambda\in\Lambda$ and all $k\leq -2$. If $q:\limite
Y_\lambda \flecha Z$ is the morphism in the first sequence
above, then we get that $\varinjlim (H^k(Y_\lambda ))\cong
H^k(\limite Y_\lambda
)\xymatrix{\ar[r]^{H^k(q)} & }H^k(Z)\cong\varinjlim
H^k(X_\lambda )$ is an isomorphism, for all $k\leq -3$ and a
monomorphism with cokernel in $\T_0$, for $k=-2$. This shows that
$W'\in \D^{[-1,0]}(R)$. Moreover, we have an induced exact sequence

$$\xymatrix{0 \ar[r] & \varinjlim H^{-2}(Y_\lambda ) \ar[r]^{\hspace{0.4 cm}H^{-2}(q)} & H^{-2}(Z) \ar[r] &  H^{-1}(W')\ar[r] & \varinjlim H^{-1}(Y_\lambda )}$$
Since $H^{-1}(Y_\lambda )=t_0(H^{-1}(X_\lambda ))$ and
$\text{Coker}(H^{-2}(q))$ are in $\T_0$ we derive that
$H^{-1}(W')\in\T_0$. But assertion 3 of Lemma \ref{lem. last
homology} says that $H^{-1}(W')\in\F_0$. It follows that
$H^{-1}(W')=0$, which implies that the canonical map
\begin{center}
$\varinjlim H^{-2}(X_\lambda)\cong\varinjlim H^{-2}(Y_\lambda)\cong H^{-2}(\limite Y_\lambda )\xymatrix{\ar[r]^{H^{-2}(q)} & } H^{-2}(Z)\cong H^{-2}(\limite X_\lambda)
$
\end{center} 
is an isomorphism.  We then get that the canonical map $\varinjlim H^{k}(X_\lambda)\flecha H^k(\limite
X_\lambda)$ is an isomorphism, for all $k\leq -2$. By Lemma
\ref{lem. domino effect}, we conclude that this morphism is also an
isomorphism, for all $k\in\mathbb{Z}$.\\

\emph{Step 3: The general case:}

Let $(X_{\lambda})$ be an arbitrary direct system of $\Ht$. We get
the following commutative diagram, with exact rows, in $\Ht$:

$$\xymatrix{ & \limite \tau^{\leq -1}(X_{\lambda}) \ar[r] \ar[d]_{\epsilon}& \limite X_\lambda \ar[r] \ar@{=}[d] & \limite(H^{0}(X_{\lambda})[0]) \ar[d] \ar[r] & 0 \\ 0 \ar[r] & \tau^{\leq -1}(\limite X_{\lambda}) \ar[r] & \limite X_{\lambda} \ar[r] & H^{0}(\limite {X_{\lambda}})[0] \ar[r] & 0 }$$

Its right vertical arrow is an isomoprhism since
$H^0:\Ht \flecha R \Mod$ preserves direct limits. By ker-coker
lemma, the morphism $\epsilon$ is  an epimorphism. We next consider
the exact sequence $0\flecha W''\flecha \linebreak \limite\tau^{\leq
-1}X_\lambda\xymatrix{\ar[r]^{\epsilon} & }\tau^{\leq
-1}(\limite X_\lambda )\flecha 0$ in $\Ht$. When localizing at a
prime $\mathbf{p}\in\Spec(R)\setminus\phi (0)$, the filtration
$\phi_\mathbf{p}$ has length $\leq n$. By the induction hypothesis,
we then get isomorphisms \linebreak $\varinjlim
H^k(R_\mathbf{p}\otimes_R\tau^{\leq
-1}X_\lambda)\iso H^k(\limite
(R_\mathbf{p}\otimes_R\tau^{\leq -1}X_\lambda ))$ and $\varinjlim
H^k(R_\mathbf{p}\otimes_RX_\lambda) \iso H^k(\limite
(R_\mathbf{p}\otimes_RX_\lambda ))$, for all $k\in\mathbb{Z}$. But
the canonical morphism $\tau^{\leq
-1}(R_\mathbf{p}\otimes_RX_\lambda)=R_\mathbf{p}\otimes_R\tau^{\leq
-1}X_\lambda\flecha R_\mathbf{p}\otimes_RX_\lambda$ is an
isomorphism  in $D(R_\mathbf{p})$ since
$R_\mathbf{p}\otimes_RH^0(X_\lambda )=0$, for all
$\lambda\in\Lambda$. It follows that
$H^k(1_{R_\mathbf{p}}\otimes\epsilon ):H^k(\limite
(R_\mathbf{p}\otimes_R\tau^{\leq -1}X_\lambda ))\flecha
H^k(\limite (R_\mathbf{p}\otimes_RX_\lambda ))$ is an isomorphism,
for all $k\in\mathbb{Z}$, which implies that
$1_{R\mathbf{p}}\otimes\epsilon$ is an isomorphism in
$D(R_\mathbf{p})$, for all $\mathbf{p}\in\Spec(R)\setminus\phi (0)$.
This in turn implies that $\text{Supp}(H^k(W''))\subseteq\phi (0)$
(equivalently $H^k(W'')\in\T_0$), for all $k\in\mathbb{Z}$. By a
reiterated use of assertion 3 of Lemma \ref{lem. last homology}, we
conclude that $W''\cong T[0]$, for some $T\in\T_0$.\\

By applying  the exact sequence of homologies to the triangle
\begin{center}
$T[0]\flecha \limite\tau^{\leq
-1}X_\lambda \xymatrix{\ar[r]^{\epsilon} & }\tau^{\leq
-1}(\limite X_\lambda )\xymatrix{\ar[r]^{+} & }$
\end{center}
and by using
step 2 of this proof, we get  isomorphisms $\varinjlim H^k(X_\lambda
)\cong H^k(\limite\tau^{\leq -1}X_\lambda
)\iso H^k(\tau^{\leq -1}(\limite
X_\lambda )) \linebreak =H^k(\limite X_\lambda )$, for all $k\leq -2$, and an
exact sequence

\begin{center}
$0\flecha \varinjlim H^{-1}(X_\lambda)\flecha
H^{-1}(\limite X_\lambda )\flecha T\flecha 0.$
\end{center} By
Proposition \ref{prop.gathering properties for bounded}, we conclude
that the canonical map $\varphi_k:\varinjlim H^k(X_\lambda )\flecha
H^k(\limite X_\lambda )$ is an isomorphism,  for all $k\leq -1$, which, by Lemma \ref{lem. domino effect}, means that $\varphi_k$ is an isomorphism for all $k\in\mathbb{Z}$.
\end{proof}

\subsection{The infinite case}

We are ready to prove the first main result of the paper.

\begin{teor}\label{teo. H is a Grothendieck c.}
Let $R$ be a commutative Noetherian ring and let $\phi$ be a left bounded sp-filtation of its spectrum. Then the heart $\Ht$ is a
Grothendieck category.
\end{teor}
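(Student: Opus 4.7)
The goal is to show that $\Ht$ is a Grothendieck category. Since Proposition \ref{generator of H} already supplies a generator, it only remains to verify AB5. By \cite[Proposition 3.4]{PS}, it suffices to show that each cohomological functor $H^k:\Ht\flecha R\Mod$ preserves direct limits. The plan is to reduce this statement to the eventually trivial case handled in Theorem \ref{teo. AB5 finite}.

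Fix a direct system $(X_\lambda)_{\lambda\in\Lambda}$ in $\Ht$ and put $M:=\limite X_\lambda$. Since $\phi$ is left bounded at some $n\in\mathbb{Z}$, Lemma \ref{lem. bounded heart} forces $H^j(X)=0$ for every $X\in\Ht$ and every $j<n$, so the canonical map $\varphi_k:\varinjlim H^k(X_\lambda)\flecha H^k(M)$ is trivially an isomorphism when $k<n$. For $k\geq n$, Remark \ref{rem. truncation of phi} says that $\tau^{\leq k}(X_\lambda)$ and $\tau^{\leq k}(M)$ all lie in $\mathcal{H}_{\phi_{\leq k+2}}$, which is AB5 by Theorem \ref{teo. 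AB5 finite} since $\phi_{\leq k+2}$ is left bounded and eventually trivial. Applying $\tau^{\leq k}$ to the structural morphisms $X_\lambda\flecha M$ yields a compatible family in $\mathcal{H}_{\phi_{\leq k+2}}$, and hence a canonical comparison morphism
$$\alpha_k:\varinjlim_{\mathcal{H}_{\phi_{\leq k+2}}}\tau^{\leq k}(X_\lambda)\flecha \tau^{\leq k}(M).$$
Once $\alpha_k$ is an isomorphism, the AB5 condition in $\mathcal{H}_{\phi_{\leq k+2}}$ gives $H^k$ of the source equal to $\varinjlim H^k(\tau^{\leq k}(X_\lambda))=\varinjlim H^k(X_\lambda)$, while $H^k$ of the target equals $H^k(M)$; hence $H^k(\alpha_k)=\varphi_k$ is an isomorphism and we are done.

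The main obstacle is therefore proving that $\alpha_k$ is an isomorphism. The expected approach is to describe both sides via the colimit-defining morphism $g:\coprod X_{\lambda\mu}\flecha\coprod X_\lambda$ and its cone $Z$: as in Lemma \ref{lem. comparate sp-filtration}, the target $M$ equals $\tau_\phi^{>}(Z[-1])[1]$, while the source equals $\tau_{\phi_{\leq k+2}}^{>}(\tau^{\leq k}(Z)[-1])[1]$, using that $\tau^{\leq k}$ commutes with coproducts and cones. Because $\phi$ and $\phi_{\leq k+2}$ coincide in degrees $\leq k+2$, the right truncations $\tau_\phi^{>}$ and $\tau_{\phi_{\leq k+2}}^{>}$ should agree on complexes whose homology is concentrated in that range, which should supply the required isomorphism. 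A domino-effect propagation in the spirit of Lemma \ref{lem. domino effect}, bootstrapped from the trivial isomorphism in degrees $<n$, should then transfer the comparison from low degrees up to degree $k$, completing the argument.
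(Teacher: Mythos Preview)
Your strategy has a genuine gap at the step you yourself flag as the ``main obstacle'': establishing that $\alpha_k$ is an isomorphism.  First, the computation of the source is not correct as stated.  The canonical truncation $\tau^{\leq k}$ is not a triangulated functor, so the cone of $\tau^{\leq k}(g)$ is \emph{not} $\tau^{\leq k}(Z)$ in general; in degree $k$ the cone of $\tau^{\leq k}(g)$ has homology $\Coker(H^k(g))$, whereas $H^k(Z)$ sits in an extension involving $\Ker(H^{k+1}(g))$.  Second, and more seriously, by Theorem~\ref{teo. AB5 finite} applied in $\mathcal{H}_{\phi_{\leq k+2}}$, the homology of the source of $\alpha_k$ is exactly $\varinjlim H^j(X_\lambda)$ for $j\leq k$, while the target has homology $H^j(M)$.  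Thus $\alpha_k$ is an isomorphism \emph{precisely when} all $\varphi_j$ for $j\leq k$ are isomorphisms.  So the proposed route is circular: you cannot conclude that $\alpha_k$ is an isomorphism without already knowing $\varphi_k$ is one, and neither the comparison of $\tau^{>}_\phi$ with $\tau^{>}_{\phi_{\leq k+2}}$ (which would require $\tau^{\leq}_\phi$ to preserve $\mathcal D^{\leq k+2}$, a fact that fails in general) nor Lemma~\ref{lem. domino effect} (whose hypothesis demands the system lie in $\Ht\cap\mathcal D^{\leq k+1}$, not merely in $\Ht$) supplies the missing step.

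The paper's proof avoids this circularity by a different reduction: instead of truncating the \emph{complexes}, it localizes at primes $\mathbf{p}\notin\bigcap_i\phi(i)$, where the induced filtration $\phi_\mathbf{p}$ on $\Spec(R_\mathbf{p})$ becomes eventually trivial, so Theorem~\ref{teo. AB5 finite} applies and $R_\mathbf{p}\otimes_R\varphi_j$ is an isomorphism.  This yields the crucial support condition $\Supp(\Ker\varphi_j)\cup\Supp(\Coker\varphi_j)\subseteq\bigcap_i\phi(i)$ for all $j$.  With this in hand, an induction on the homology degree proceeds by computing $\Hom_R(R/\mathbf{p},-)$ and $\Ext^1_R(R/\mathbf{p},-)$ of $\varphi_{-n+k+1}$ via the triangle $\tau^{\leq -n+k+1}Z[-1]\to H^{-n+k+1}(Z)[n-k-2]\to\tau^{\leq -n+k}Z\to\tau^{\leq -n+k+1}Z$ for $Z\in\Ht$ and $\mathbf{p}\in\phi(-n+k+3)$, together with the fact that these Ext functors commute with direct limits.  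The support constraint then forces $\Ker\varphi_{-n+k+1}$ and $\Coker\varphi_{-n+k+1}$ to vanish.  Localization at primes, not canonical truncation, is the essential idea you are missing.
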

\begin{proof}
 Without loss of
generality, we may and shall assume  that $\phi (j)\neq\emptyset$,
for all $j\in\mathbb{Z}$ and that $\phi$ is of the form $\cdots =\phi
(-n-k)=\cdots=\phi (-n-1)=\phi (-n)\supsetneq \phi(-n+1)\cdots \supseteq\phi
(0)\supseteq\phi (1)\supseteq \cdots$, for some $n\in\mathbb{N}$.\\

By Proposition \ref{generator of H} and by \cite[Proposition
3.4]{PS}, it is enough to check that, for each direct system
$(X_\lambda )_{\lambda\in\Lambda}$ in $\Ht$ and each
$j\in\mathbb{Z}$, the canonical map $\varphi_j:\varinjlim
H^j(X_\lambda )\flecha H^j(\limite X_\lambda )$ is an
isomorphism. Moreover, by Lemma \ref{lem. bounded heart}, it is
enough to do this for $j\geq -n$. Note that if $\mathbf{p}\in\Spec
(R)\setminus (\bigcap_{i\in\mathbb{Z}}\phi (i))$, then the
associated sp-filtration $\phi_\mathbf{p}$ of $\Spec(R_\mathbf{p})$
is either trivial (i.e. all its members are empty) or eventually trivial. By Theorem \ref{teo. AB5
finite},  we conclude that
$1_{R_\mathbf{p}}\otimes\varphi_j:R_\mathbf{p}\otimes_R(\varinjlim
H^j(X_\lambda ))\flecha R_\mathbf{p}\otimes_RH^j(\limite
X_\lambda )$ is an isomorphism, for all $j\in\mathbb{Z}$. We then
get that $\Supp(\Ker(\varphi_j))\cup\Supp(\Coker
(\varphi_j))\subseteq\bigcap_{i\in\mathbb{Z}}\phi (i)$, for all
$j\geq -n$.\\

Given such a direct system $(X_\lambda )$, we obtain a direct system
$(\tau^{\leq j}(X_\lambda ))_{\lambda\in\Lambda}$ in $\D(R)$, for each
$j\in\mathbb{Z}$, together with a morphism of ($\Lambda$-)direct
systems $(\tau^{\leq j}(X_\lambda) )\flecha \tau^{\leq j}(L)$,
where $L:=\limite X_\lambda$. We will prove at once, by induction on
$k\geq 0$,  the following two facts and the proof will be then
finished:

\begin{enumerate}
\item The canonical map $\varinjlim H^{-n+k}(X_\lambda
)\flecha H^{-n+k}(L)$ is an isomorphism;
\item The induced map $\varinjlim{\Hom_{D(R)}(R/\mathbf{p}[-j],\tau^{\leq
-n+k}(X_{\lambda})[s])}\flecha
\Hom_{\D(R)}(R/\mathbf{p}[-j],\tau^{\leq -n+k}(L) [s])$ is an isomorphism,
for all $k,s\in\mathbb{Z}$ and all $\mathbf{p}\in\Spec(R)$.
\end{enumerate}

For $k=0$, Proposition \ref{prop.gathering properties for bounded}
proves fact 1. On the other hand, by Lemma \ref{lem. bounded heart},
we have  isomorphisms $\tau^{\leq -n}(X_\lambda)\cong H^{-n}(X_\lambda
)[n]$ and $\tau^{\leq -n}(L)\cong H^{-n}(L)[n]$ in $\D(R)$. Proving
fact 2 is equivalent in this case to proving that the canonical map

$$\varinjlim\Ext_R^{j+s}(R/\mathbf{p},H^{-n}(X_\lambda ))\flecha \Ext_R^{j+s}(R/\mathbf{p},H^{-n}(L))$$
is an isomorphism. This follows immediately from fact 1 for $k=0$
since the functor $\Ext_R^{j+s}(R/\mathbf{p},?)$ preserves direct
limits.\\

Let $k\geq 0$ be a natural number and suppose now that facts 1 and 2
are true for the natural numbers $\leq k$. Let $Z\in\Ht$ be any
object. By Remark \ref{rem. truncation of phi}, we have that
$\tau^{\leq -n+k+1}(Z)\in \mathcal{H}_{\phi_{\leq
-n+k+3}}\subseteq\mathcal{U}_{\phi_{\leq -n+k+3}}^\perp [1]$. We
next consider the following triangle in $\D(R)$:
$$\xymatrix{\tau^{\leq -n+k+1}(Z)[-1] \ar[r] & H^{-n+k+1}(Z)[n-k-2] \ar[r] & \tau^{\leq -n+k}(Z) \ar[r] & \tau^{\leq -n+k+1}(Z)}$$
If $\mathbf{p}\in \phi(-n+k+3)$ is any prime, then the functor
$\Hom_{\D(R)}(R/\mathbf{p}[n-k-2],?)$  vanishes on
$\tau^{\leq -n+k+1}(Z)[-1]$ and $\tau^{\leq -n+k+1}(Z)$ while
$\Hom_{\D(R)}(R/\mathbf{p}[n-k-3],?)$ vanishes on
$\tau^{\leq -n+k+1}(Z)[-1]$. We then get:

\begin{enumerate}
\item[1')] The morphism $\Hom_{R}(R/\mathbf{p},
H^{-n+k+1}(Z))\cong\Hom_{\D(R)}(R/\mathbf{p}[n-k-2],H^{-n+k+1}(Z)[n-k-2])\flecha$
\\ $\Hom_{\D(R)}(R/\mathbf{p}[n-k-2], \tau^{\leq -n+k}(Z))$ is an
isomorphism.
\item[2')] The morphism $\Ext^{1}_{R}(R/\mathbf{p},H^{-n+k+1}(Z))\cong\Hom_{D(R)}(R/\mathbf{p}[n-k-3],H^{-n+k+1}(Z)[n-k-2])
\flecha$ \\ $\Hom_{\D(R)}(R/\mathbf{p}[n-k-3], \tau^{\leq
-n+k}(Z))\cong\Hom_{\D(R)}(R/\mathbf{p}[n-k-2], \tau^{\leq
-n+k}(Z)[1])$ is a monomorphism.
\end{enumerate}
We now get the following chain of isomorphisms, where we write over
each arrow the reason for it to be an isomorphism:

$$\Hom_R(R/\mathbf{p},\varinjlim H^{-n+k+1}(X_\lambda ))\stackrel{can}{\longleftarrow}\varinjlim\Hom_R(R/\mathbf{p},H^{-n+k+1}(X_\lambda ))
\stackrel{fact
1'}{\longrightarrow}\varinjlim\Hom_{\D(R)}(R/\mathbf{p}[n-k-2],\tau^{\leq -n+k}(X_\lambda))$$

$$\stackrel{induction}{\longrightarrow}
\Hom_{\D(R)}(R/\mathbf{p}[n-k-2],\tau^{\leq -n+k}(L))\stackrel{fact
1'}{\longleftarrow}\Hom_{R}(R/\mathbf{p},H^{-n+k+1}(L)).
$$
It is not difficult to see that the resulting isomorphism is the one
obtained by applying the functor $\Hom_R(R/\mathbf{p},?)$ to the
canonical morphism $\varphi_{-n+k+1}:\varinjlim H^{-n+k+1}(X_\lambda
)\longrightarrow H^{-n+k+1}(L)$. If follows that \linebreak
$\Hom_R(R/\mathbf{p},\Ker (\varphi_{-n+k+1}))=0$, for all
$\mathbf{p}\in\phi (-n+k+3)$, and hence (see the second   paragraph
of this proof) the same is true for all
$\mathbf{p}\in\Supp(\Ker(\varphi_{-n+k+1}))$. Therefore
$\varphi_{-n+k+1}$ is a monomorphism. \\

We claim that the  morphism
$\Ext_R^1(R/\mathbf{p},\varphi_{-n+k+1}):\Ext_R^1(R/\mathbf{p},\varinjlim
H^{-n+k+1}(X_\lambda
))\flecha \Ext_R^1(R/\mathbf{p},H^{-n+k+1}(L))$ is a
monomorphism, for all $\mathbf{p}\in\phi (-n+k+3)$. By taking then
the sequence of $\Ext(R/\mathbf{p},?)$ associated to the exact
sequence $\xymatrix{0 \ar[r] & \varinjlim H^{-n+k+1}(X_\lambda
) \ar[rr]^{\hspace{0.4 cm}\varphi_{-n+k+1}} && H^{-n+k+1}(L) \ar[r] & \Coker
(\varphi_{-n+k+1})\ar[r] & 0}$, we will derive that
$\Hom_R(R/\mathbf{p},\Coker(\varphi_{-n+k+1}))=0$, for all
$\mathbf{p}\in\phi (-n+k+3)$. Then,  arguing as in the case of the
kernel, we will deduce that $\Coker (\varphi_{-n+k+1})=0$, so that
the $k+1$-induction step for fact 1 above will be covered. In order
to settle our claim, we consider the commutative diagram

$$\xymatrix{\Ext^{1}_{R}(R/\mathbf{p},\varinjlim{H^{-n+k+1}(X_{\lambda})})  & & \\ \varinjlim{\Ext^{1}_{R}(R/\mathbf{p},H^{-n+k+1}(X_{\lambda}))} \ar[rr] \ar[u]_{\wr} \ar[d] && \Ext^{1}_{R}(R/\mathbf{p},H^{-n+k+1}(L)) \ar[d] \\ \varinjlim{\Hom_{\D(R)}(R/\mathbf{p}[n-k-2],\tau^{\leq -n+k}(X_{\lambda})[1])} \ar[rr]^{\sim} && \Hom_{\D(R)}(R/\mathbf{p}[n-k-2],\tau^{\leq -n+k}(L)[1])}$$

The induction hypothesis for fact 2 gives that the lower horizontal
arrow is an isomorphism. On the other hand, by 2') above, the
downward left vertical arrow is a monomorphism. Then our claim
follows immediately. On the other hand, the $k+1$-step of the induction for fact 2  follows from Lemma \ref{lem.domino-effect2}.
\end{proof}

\begin{rem}
Theorem A in the introduction is just a geometric translation of Theorem \ref{teo. H is a Grothendieck c.}.  
\end{rem}

\begin{exem}
If $R$ is a commutative Noetherian ring such that $\Spec(R_\p)$ is a finite set, for all $\p \in \Spec(R)$ (e.g.  if $R$ has Krull dimension $\leq$ 1), then the heart of every compactly generated t-structure in $\D(R)$ is a Grothendieck category.
\end{exem}
\begin{proof}
Let $(\mathcal{U},\mathcal{U}^{\perp}[1])$ be a compactly generated t-structure in $\D(R)$ and let $\phi$ be the associated sp-filtration. It is clear that $\phi_{\p}$ is a left bounded sp-filtration (since $\Spec(R_\p)$ is finite), for all $\p\in \Spec(R)$. Using the previous theorem, we get that $\mathcal{H}_{\phi_\p}$ is a Grothendieck category, in particular an AB5 abelian category, for all $\p\in \Spec(R)$. Now apply Corollary \ref{cor. Hp is AB5} and Proposition \ref{generator of H}.
\end{proof}
\section{The module case}
 
The goal of this section is to identify all the compactly generated t-structures in $\D(R)$  whose heart is a module category. We start by giving some comments on t-structures in a finite product of triangulated categories.

\subsection{t-structures in a finite direct product of triangulated categories}

Let $\D_1,\dots,\D_n$ be triangulated categories with arbitrary (set-indexed) coproducts. The product category $\D =\D_1\times\dots \times \D_n$ has an obvious structure of triangulated category with the relevant concepts defined componentwise. Then one can view each $\D_i$ as a full triangulated subcategory of $\D$ by identifying each object $X$ of $\D_i$ with the object $(0,\dots,0,X,0,\dots,0)$ of $\D$, with $X$ in the $i$-th position. With this convention, we have the following useful result, where $\text{aisl}(\mathcal D)$ denote the (ordered by inclusion) class of aisles in the triangulated category $\mathcal D$.

\begin{lemma} \label{lem. product of aisles}
With the terminology of the previous paragraph, the following assertions hold:

\begin{enumerate}
\item If $\mathcal{U}$ is the aisle of a t-structure in $\D$, then $\mathcal{U}\cap\D_i$ is the aisle of a t-structure in $\D_i$, for $i=1,\dots,n$.
\item If $\mathcal{U}_i$ is the aisle of a t-structure in $\D_i$, for $i=1,\dots,n$, then the full subcategory $\mathcal{U}$ of $\D$ consisting of the $n$-tuples $(X_1,\dots,X_n)$ such that $X_i\in\mathcal{U}_i$, for all $i=1,\dots,n$, is the aisle of a t-structure in $\D$. We denote this aisle by $\oplus_{1\leq i\leq n}\hspace{0.04 cm}\mathcal{U}_i$.
\item The assignment $(\mathcal{U}_1\times \dots \times\mathcal{U}_n)\rightsquigarrow\oplus_{1\leq i\leq n}\hspace{0.04cm}\mathcal{U}_i$ defines an isomorphism of ordered classes $$\text{aisl}(\D_1)\times \dots \times\text{aisl}(\D_n) \iso \text{aisl}(\D).$$
\item If $\mathcal{H}_i$ denotes the heart of the t-structure $(\mathcal{U}_i,\mathcal{U}_i^\perp [1])$, for $i=1,\dots,n$, and $\mathcal{H}$ is the heart of the t-structure in $\D$ whose aisle is $\oplus_{1\leq i\leq n}\hspace{0.04 cm}\mathcal{U}_i$, then $\mathcal{H}$ is equivalent to $\mathcal{H}_1\times \dots \times\mathcal{H}_n$.
\end{enumerate}
\end{lemma}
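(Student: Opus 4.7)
The plan is to treat the four assertions in turn, making systematic use of the projection and inclusion functors associated with the product structure. Write $\iota_i:\D_i \hookrightarrow \D$ for the embedding that sends an object $X$ to the tuple $(0,\dots,0,X,0,\dots,0)$ with $X$ in the $i$-th position, and $p_i:\D \to \D_i$ for the projection onto the $i$-th factor. Both are triangulated functors, $p_i \circ \iota_i = \mathrm{id}_{\D_i}$, and every object $U$ of $\D$ admits a canonical decomposition $U \cong \bigoplus_{j=1}^{n} (\iota_j\circ p_j)(U)$. Also, triangles, coproducts, shifts and Hom-groups in $\D$ are computed componentwise.

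For assertion (2), the subcategory $\oplus_{1\leq i\leq n}\hspace{0.04cm}\mathcal{U}_i$ is patently closed under the shift $[1]$ and under direct summands. Given $X=(X_1,\dots,X_n)\in\D$, one just assembles the truncation triangles $U_i \to X_i \to V_i \stackrel{+}{\to}$ provided by each $(\mathcal{U}_i,\mathcal{U}_i^{\perp}[1])$ into a single componentwise triangle $(U_1,\dots,U_n)\to X \to (V_1,\dots,V_n) \stackrel{+}{\to}$ in $\D$; the required orthogonality follows because $\Hom_\D$ splits as a direct sum.

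Assertion (1) is the main point. Let $\mathcal{U}$ be an aisle in $\D$. Since every $U\in\mathcal{U}$ decomposes as $U \cong \bigoplus_j (\iota_j p_j)(U)$ and aisles are closed under direct summands, each $(\iota_j p_j)(U)$ lies in $\mathcal{U}$. Now fix $X\in\D_i$ and take its truncation triangle $U\to X \to W \stackrel{+}{\to}$ in $\D$, with $U\in\mathcal{U}$ and $W\in\mathcal{U}^{\perp}[1]$. For $j\neq i$, applying $p_j$ yields a triangle $p_j(U)\to 0 \to p_j(W) \stackrel{+}{\to}$, so $p_j(W)\cong p_j(U)[1]$. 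By the same summand argument applied to $W$ and to the coaisle, $(\iota_j p_j)(W)\in\mathcal{U}^{\perp}[1]$, hence $(\iota_j p_j)(U)\in\mathcal{U}\cap\mathcal{U}^{\perp}$; but then the identity morphism of $(\iota_j p_j)(U)$ vanishes (because $\Hom_\D(\mathcal{U},\mathcal{U}^{\perp})=0$), so $(\iota_j p_j)(U)=0$. Thus $U$ and $W$ both lie in $\D_i$, and the triangle is the required truncation triangle inside $\D_i$. The identity $\mathcal{U}^{\perp}\cap\D_i = (\mathcal{U}\cap\D_i)^{\perp}$, computed in $\D_i$, is a direct consequence of the componentwise splitting of $\Hom_\D$ together with $U \cong \bigoplus_j (\iota_j p_j)(U)$.

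For (3), the constructions in (1) and (2) are mutually inverse: starting with $(\mathcal{U}_1,\dots,\mathcal{U}_n)$, clearly $(\oplus_i \mathcal{U}_i)\cap\D_i=\mathcal{U}_i$; starting with an aisle $\mathcal{U}\subseteq\D$ and putting $\mathcal{U}_i:=\mathcal{U}\cap\D_i$, the decomposition $U\cong\bigoplus_j(\iota_j p_j)(U)$ together with summand-closure of $\mathcal{U}$ yields $\mathcal{U}=\oplus_i\mathcal{U}_i$. Order-preservation in both directions is evident. For (4), the componentwise splitting of Hom gives that the coaisle of $\oplus_i\mathcal{U}_i$ is $\oplus_i\mathcal{U}_i^{\perp}$, so the heart $\mathcal{H}$ consists exactly of the tuples $(H_1,\dots,H_n)$ with $H_i\in\mathcal{H}_i$, and the functors $\iota_i$, $p_i$ induce the claimed equivalence $\mathcal{H}\simeq \mathcal{H}_1\times\cdots\times\mathcal{H}_n$. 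The one genuinely non-formal step is the argument in (1) forcing $(\iota_j p_j)(U)=0$ for $j\neq i$ via $\Hom_\D(\mathcal{U},\mathcal{U}^{\perp})=0$; everything else is bookkeeping around the componentwise structure of $\D$.
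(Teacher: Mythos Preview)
Your proof is correct and, in fact, more detailed than the paper's own argument: the paper declares assertions 1, 2 and 3 to be ``easy and left to the reader'' and only writes out assertion 4, computing $\mathcal{U}^\perp=\mathcal{U}_1^\circ\times\cdots\times\mathcal{U}_n^\circ$ (where $\mathcal{U}_i^\circ$ is the right orthogonal of $\mathcal{U}_i$ inside $\D_i$) and deducing $\mathcal{H}=\mathcal{H}_1\times\cdots\times\mathcal{H}_n$ directly. Your treatment of assertion 4 is essentially the same, and your argument for assertion 1---forcing $(\iota_j p_j)(U)\in\mathcal{U}\cap\mathcal{U}^\perp$ for $j\neq i$ and concluding it vanishes---is exactly the kind of verification the paper is leaving implicit.
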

\begin{proof}
The proof of ssertions 1, 2 and 3 is easy and left to the reader. As for assertion 4, put $\mathcal{U}:=\oplus_{1\leq i\leq n}\hspace{0.04cm}\mathcal{U}_i$. We then have $\mathcal{U}^\perp =\bigcap_{1\leq i\leq n} \hspace{0.04 cm}\mathcal{U}_i^\perp$, where $(-)^\perp$ denotes orthogonality in $\D$ and we are viewing $\mathcal{U}_i$ as a full subcategory of $\D$. Note that we then have  $\mathcal{U}_i^\perp=\D_1\times \dots \times \D_{i-1}\times\mathcal{U}_i^\circ\times\D_{i+1}\times \dots \times\D_n$, where $\mathcal{U}_i^\circ$ denotes the right orthogonal of $\mathcal{U}_i$ within $\D_i$. It then follows that $\mathcal{U}^\perp=\mathcal{U}_1^\circ \times \dots \times \mathcal{U}_n^\circ$, which implies that $\mathcal{H}=\mathcal{U}\cap \mathcal{U}^\perp [1]$ is equal to $\mathcal{H}_1\times \dots \times\mathcal{H}_n$.  Assertion 4 follows immediately from this. 
\end{proof}

\begin{exem}
Let $R$ be a not necessarily commutative ring and let $\{e_1,\dots,e_n\}$ be a family of nonzero orthogonal central idempotents of $R$ such that $1=\sum_{1\leq i\leq n}e_i$. If $R_i=Re_i$ and $X_i$ is a complex of $R_i$-modules, for all $i=1,\dots,n$, then $X=\oplus_{1\leq i\leq n}X_i$ is a complex of $R$-modules. Moreover, the assignment $(X_1,\dots,X_n)\rightsquigarrow\oplus_{1\leq i\leq n}X_i$ gives an equivalence of triangulated categories $\D(R_1)\times \dots \times\D(R_n) \iso \D(R)$. The previous lemma then says that all aisles of t-structures in $\D(R)$ are 'direct sums' of aisles of t-structures in the $\D(R_i)$, with the obvious meaning. 
\end{exem}

\subsection{The eventually trivial case for connected rings}

Let $e_1,\dots,e_n$ be the connected components of $R$.  The last lemma and example say that if $(\mathcal{U},\mathcal{U}^\perp [1])$ is a t-structure in $\D(R)$, then its heart is a module category if, and only if, the heart of the t-structure $(\mathcal{U}\cap\D(R_i),(\mathcal{U}^\perp\cap\D(R_i))[1])$ in $\D(R_i)$ is a module category, where $R_i=Re_i$,  for each $i=1,\dots,n$. As a consequence, our problem in the section reduces to the case when $R$ is connected. In this subsection we assume that $R$ is such and, as a first step towards the complete solution of the problem, we fix an eventually trivial sp-filtration $\phi$ of $\Spec(R)$ (see Subsection \ref{sub. compactly generated}).

\begin{prop} \label{prop.modular heart connected case}
Let us assume that $R$ is connected and let $\phi$ be an eventually trivial sp-filtration of $\Spec(R)$ such that $\phi (i)\neq\emptyset$, for some $i\in\mathbb{Z}$. The following assertions are equivalent:

\begin{enumerate}
\item  $\Ht$ is a module category;
\item There is an $m\in\mathbb{Z}$ such that $\phi (m)=\Spec(R)$ and $\phi (m+1)=\emptyset$;
\item There is an $m\in \mathbb{Z}$ such that the t-structure associated to $\phi$ is $(\D^{\leq m}(R),\D^{\geq m}(R))$. 
\end{enumerate}
In that case $\Ht$ is equivalent to $R-\text{Mod}$. 
\end{prop}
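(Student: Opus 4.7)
The equivalences $(2) \Leftrightarrow (3)$ and the implication $(3) \Rightarrow (1)$ are straightforward consequences of Proposition \ref{prop.sp-filtrations versus t-structures}. Indeed, the filtration described in $(2)$ makes the support condition $\Supp(H^i(X)) \subseteq \phi(i)$ collapse to $H^i(X) = 0$ for $i > m$, so $\mathcal{U}_\phi = \mathcal{D}^{\leq m}(R)$; under the bijection of that proposition, the canonical t-structure corresponds back to the filtration of $(2)$. Since the heart of $(\mathcal{D}^{\leq m}(R), \mathcal{D}^{\geq m}(R))$ is $R\Mod[-m] \simeq R\Mod$, both $(3) \Rightarrow (1)$ and the final assertion of the statement come for free.

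The substantial direction is $(1) \Rightarrow (2)$. I would put $\phi$ into the normal form
\[
\cdots = \phi(-n) \supsetneq \phi(-n+1) \supseteq \cdots \supseteq \phi(0) \supsetneq \phi(1) = \emptyset = \cdots,
\]
for some $n \geq 0$, so that $\Ht \subseteq \mathcal{D}^{[-n,0]}(R)$ by Lemma \ref{lem. bounded heart}, and aim to show that $n = 0$ and $\phi(0) = \Spec(R)$. First I would dispatch the case $n = 0$: by Example \ref{exem.TF for largest degrees} the heart is $\mathcal{T}_{\phi(0)}[0]$, and a progenerator $P$ would have to be projective in $\mathcal{T}_{\phi(0)}$; since this subcategory is closed under extensions in $R\Mod$, this translates to $\Ext^1_R(P,T) = 0$ for every $T \in \mathcal{T}_{\phi(0)}$. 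I would argue by contradiction: if $\phi(0) \subsetneq \Spec(R)$, pick $\mathbf{q} \notin \phi(0)$ and exploit connectedness of $\Spec(R)$ to find a specialization $\mathbf{q} \subsetneq \mathbf{p}$ with $\mathbf{p} \in \phi(0)$; one then builds a module $T \in \mathcal{T}_{\phi(0)}$ for which the generation condition $\Hom_R(P, R/\mathbf{p}) \neq 0$ is incompatible with the vanishing of $\Ext^1_R(P,T)$, forcing $\phi(0) = \Spec(R)$.

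For $n \geq 1$ I would derive a contradiction by reducing to the previous situation via localization. By Corollary \ref{cor.modular heart via localization}, $\mathcal{H}_{\phi_\mathbf{p}}$ remains a module category for every prime $\mathbf{p}$; choosing $\mathbf{p}$ minimal in $\phi(-n) \setminus \phi(-n+1)$ collapses the top of $\phi_\mathbf{p}$ to $\Spec(R_\mathbf{p})$, so that $R_\mathbf{p}[n]$ sits in $\mathcal{U}_{\phi_\mathbf{p}}$. Combining Proposition \ref{prop. description of stalk} with Lemma \ref{lem. last homology} one then exhibits inside $\mathcal{H}_{\phi_\mathbf{p}}$ two non-isomorphic stalk complexes at distinct cohomological degrees together with a non-split extension between them that no single compact projective generator can accommodate. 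The main obstacle is precisely this final step: organizing the $\Ext^1_{\Ht}$-computations between the different layers of the heart so as to show that a compact projective object cannot simultaneously generate $\mathcal{T}_0[0]$ and split all the extensions coming from $\phi(-n), \dots, \phi(-1)$. This is where the connectedness hypothesis enters essentially, through Lemma \ref{lem. product of aisles} which rules out any nontrivial decomposition of the t-structure that would ``hide'' the obstruction in a product factor.
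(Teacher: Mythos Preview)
Your proof has a genuine gap, and the approach you take is more complicated than necessary. The paper does \emph{not} split into the cases $n=0$ and $n\geq 1$; it works directly with a progenerator $P$ of $\Ht$ for arbitrary $n$ and proves in three short steps that $\phi(0)=\Spec(R)$, which automatically forces $n=0$ since the filtration is decreasing.

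The paper's idea is this: from the exact sequence $0\to\tau^{\leq -1}(P)\to P\to H^0(P)[0]\to 0$ in $\Ht$ and the compactness of $P$, one shows that $H^0(P)$ is a finitely generated $R$-module. Right exactness of $H^0$ on $\Ht$ plus projectivity of $P$ then give that $H^0(P)$ is a progenerator of the torsion class $\mathcal{T}_0$. Finally, since $H^0(P)$ is finitely generated its annihilator $\mathbf{a}$ is the smallest ideal in the Gabriel topology of $\mathcal{T}_0$, hence idempotent; by \cite[Lemma VI.8.6]{S} one has $\mathbf{a}=Re$ for an idempotent $e$, and connectedness of $R$ forces $e=0$, i.e.\ $\mathcal{T}_0=R\Mod$.

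Your $n=0$ sketch is not obviously wrong, but the step ``use connectedness to find a specialization $\mathbf{q}\subsetneq\mathbf{p}$ with $\mathbf{p}\in\phi(0)$'' is not justified (topological connectedness does not directly produce specialization chains), and you never say what $T$ is or why the extension obstructs projectivity of $P$. Your $n\geq 1$ reduction via Corollary~\ref{cor.modular heart via localization} is a reasonable first move, but as you yourself acknowledge, the ``final step'' of organizing the $\Ext^1_{\Ht}$-computations between stalk complexes at different degrees is missing entirely---and this is precisely the heart of the matter. You also invoke Lemma~\ref{lem. product of aisles}, but that lemma concerns decompositions of the ring, not of the t-structure, and plays no role here. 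The paper's annihilator/idempotent argument bypasses all of these difficulties at once.
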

\begin{proof}
The implications $2)\Longleftrightarrow 3)\Longrightarrow 1)$ are clear and the final statement is a consequence of assertion 3.

$1)\Longrightarrow 2)$  Without loss of generality, we may and shall assume that  $\phi (0)\neq\emptyset =\phi (1)$. We then fix a progenerator $P$ of $\mathcal{H}=\Ht$ and will prove in several steps that $\T_0=R-\text{Mod}$. \\

\emph{Step 1: $H^0(P)$ is a finitely generated $R$-module}: We have   that $P\in \D^{\leq 0}(R)$ since $\Supp(H^i(P))\subseteq\phi (i)=\emptyset$, for all $i>0$. Then, by  Remark \ref{rem. truncation of phi}, we know that $\tau^{\leq -1}(P)\in \mathcal{H}_{\phi \leq 0}=\Ht$, and hence, we have an exact sequence in $\Ht$
$$\xymatrix{0 \ar[r] & \tau^{\leq -1}(P) \ar[r] & P \ar[r] & H^{0}(P)[0] \ar[r] & 0 & (\ast)}$$  
But, for each $R$-module $Y$, we have isomorphisms $\Hom_{R}(H^{0}(P),Y) \cong \Hom_{\D(R)}(H^{0}(P)[0],Y[0]) \cong \Hom_{\D(R)}(P,Y[0]).$ On the other hand, if $(T_\lambda)$ is a direct system in $\T_{0}$, then $\limite{T_\lambda[0]}\cong (\varinjlim{T_\lambda})[0]$ (see Proposition \ref{prop. stalk heart}). It follows that $\Hom_{R}(H^{0}(P),?)$ preserves direct limits of objects in $\T_0$, because $P$ is a finitely presented object of $\Ht.$ Using Remark \ref{remark class closed direct.limit},  we get the following chain of canonical isomorphisms, which proves that $H^0(P)$ is a finitely presented (=finitely generated) $R$-module. 

\begin{small}
$$\xymatrix{\varinjlim{\Hom_{R}(H^{0}(P),M_{\lambda})} & \varinjlim{\Hom_{R}(H^{0}(P),t_{0}(M_\lambda))} \ar[l]_{\sim} \ar[r]^{\sim \hspace{2.1 cm}} & \Hom_{R}(H^{0}(P),\varinjlim{t_{0}(M_{\lambda})})=\Hom_{R}(H^{0}(P),t_{0}(\varinjlim{M_{\lambda}})) \ar[d]^{\wr} \\ && \Hom_{R}(H^{0}(P),\varinjlim{M_\lambda})  }$$
\end{small}

\emph{Step 2: $H^0(P)$ is a progenerator of $\T_0$:} Step 1 proves that $H^0(P)$ is a compact object of $\T_0$. On the other hand, the functor $H^0:\mathcal{H}\longrightarrow R-\text{Mod}$ is right exact and its essential image is contained in $\T_0$. The facts that $\T_0[0]\subset\mathcal{H}$ and $P$ is a generator of $\mathcal{H}$ imply that $\T_0=\text{Gen}(H^0(P))$ and, hence, that $H^0(P)$ is a generator of $\T_0$. Finally, if $T\in\T_0$ and we apply
 $\Hom_{D(R)}(?,T[1])$ to the triangle associated to the sequence $(\ast)$, we obtain the following exact sequence in $R-\text{Mod}$:

$$\xymatrix{\Hom_{\D(R)}(\tau^{\leq -1}(P)[1],T[1])=0 \ar[r] & \Ext^{1}_{R}(H^{0}(P),T) \ar[r] & \Hom_{\D(R)}(P,T[1])=\Ext^{1}_{\Ht}(P,T[0])=0.}$$
This shows that $\Ext^{1}_{R}(H^{0}(P),T)=0$ and, hence, that $H^0(P)$ is a projective object of $\T_0$.\\

\emph{Step 3: $\T_0=R-\text{Mod}$:}  Using the bijection between hereditary torsion pairs and Gabriel topologies (see \cite[Chapter VI]{S}) and step 1, we know that there are ideals $\mathbf{a}_{1}, \dots, \mathbf{a}_{m}$ such that $R/\mathbf{a}_{i}\in \T_{0}$, for $i=1,\dots, m$, and there is an epimorphism $\underset{1 \leq i \leq m}{\oplus}\frac{R}{\mathbf{a}_i} \epic H^0(P)$. It follows that $\mathbf{a}=\underset{1 \leq i \leq m}{\cap} \mathbf{a}_{i} \subseteq \text{ann}_{R}(H^0(P))$ (commutativity of $R$ is essential here), where $\text{ann}_R(X)=\{a\in R:$ $aX=0\}$ for each $R$-module $X$. But $R/\mathbf{a}$ is also in $\T_{0}$, whence generated by $H^0(P)$. It follows that $\text{ann}_{R}(H^0(P)) \subseteq \mathbf{a}$ and, hence, that this inclusion is an equality. We then get that $\mathbf{a}=\text{ann}_{R}(H^0(P))$ is contained in any ideal $\mathbf{b}$ of $R$ such that $R/\mathbf{b}\in \T_{0}$, which also implies that $\T_0=\Gen(R/\mathbf{a})$. By \cite[Propositions VI.6.12 and VI.8.6]{S}, we know that $\mathbf{a}=Re$, for some idempotent element $e\in R$. The connectedness of $R$ gives that $\mathbf{a}=0$ or $\mathbf{a}=R$. But the second possibility is discarded because $\phi(0)\neq \emptyset$, and hence $\T_0\neq 0$. This shows that $\T_0=\Gen(R)=R-\text{Mod}$ or, equivalently, that $\phi (0)=\Spec(R)$.
\end{proof}

\subsection{Some auxiliary results}

\begin{prop} \label{prop.heart equivalent to quotient}
Let $Z$ be a sp-subset of $\Spec(R)$, and let $\phi$ be the sp-filtration given by $\phi(i)=\Spec(R)$, for all $i \leq 0$, and $\phi(i)=Z$, for all $i>0$. Then, for each $Y\in\Ht$, the $R$-module $H^{0}(Y)$ is $Z$-closed $R$-module. Moreover, the assignment $Y \rightsquigarrow H^{0}(Y)$ defines an equivalence of categories $H^{0}:\Ht \iso \frac{R\Mod}{\T_Z}$, whose inverse is $L(?[0])$.
\end{prop}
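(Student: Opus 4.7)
\emph{The plan is to first unpack what $\mathcal{H}_\phi$ looks like for this $\phi$.} By Proposition \ref{prop.sp-filtrations versus t-structures}, $\mathcal{U}_\phi$ consists of the $X$ with $H^i(X)\in\mathcal{T}_Z$ for all $i>0$, while $Y[-1]\in\mathcal{U}_\phi^\perp$ unwinds (from $\phi(0)=\Spec(R)$ and $\phi(i)=Z$ for $i>0$) to $Y\in\mathcal{D}^{\geq0}(R)$ together with $\mathbf{R}\Gamma_Z(Y)=0$. Hence
\[
\mathcal{H}_\phi=\{\,Y\in\mathcal{D}^{\geq0}(R):H^i(Y)\in\mathcal{T}_Z\text{ for all }i>0\text{ and }\mathbf{R}\Gamma_Z(Y)=0\,\}.
\]
Given $Y\in\mathcal{H}_\phi$, the canonical triangle $H^0(Y)[0]\to Y\to\tau^{\geq1}(Y)\xrightarrow{+}$ together with $\mathbf{R}\Gamma_Z(\tau^{\geq1}(Y))\cong\tau^{\geq1}(Y)$ (which follows from \cite[Theorem 1.8]{AJS}, since $\tau^{\geq1}(Y)$ has all cohomology in $\mathcal{T}_Z$) and $\mathbf{R}\Gamma_Z(Y)=0$ yields $\mathbf{R}\Gamma_Z(H^0(Y))[1]\cong\tau^{\geq1}(Y)\in\mathcal{D}^{\geq1}(R)$, so $\mathbf{R}\Gamma_Z(H^0(Y))\in\mathcal{D}^{\geq2}(R)$. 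This is exactly the vanishing $\Gamma_Z(H^0(Y))=\mathbf{R}^1\Gamma_Z(H^0(Y))=0$, i.e.\ $H^0(Y)$ lies in $\mathcal{G}_Z\cong R\Mod/\mathcal{T}_Z$ (via the adjoint pair $(q,j)$), establishing the first assertion.

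\emph{For essential surjectivity,} I would fix $M\in\mathcal{G}_Z$ and analyze the unit triangle $U\to M[0]\to L(M[0])\xrightarrow{+}$ of the adjunction $L\dashv j\colon\mathcal{H}_\phi\hookrightarrow\mathcal{U}_\phi$, where $U\in\mathcal{U}_\phi[1]$; equivalently $H^k(U)\in\mathcal{T}_Z$ for all $k\geq0$. Since $L(M[0])\in\mathcal{D}^{\geq0}(R)$ and $H^i(M[0])=0$ for $i\neq0$, the long exact cohomology sequence forces $H^k(U)=0$ for $k<0$, so all cohomology of $U$ lies in $\mathcal{T}_Z$ and therefore $U\cong\mathbf{R}\Gamma_Z(U)$ by \cite[Theorem 1.8]{AJS}. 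Applying $\mathbf{R}\Gamma_Z$ to the triangle and using $\mathbf{R}\Gamma_Z(L(M[0]))=0$ gives $U\cong\mathbf{R}\Gamma_Z(U)\cong\mathbf{R}\Gamma_Z(M)$; since $M$ is $Z$-closed, $\mathbf{R}\Gamma_Z(M)\in\mathcal{D}^{\geq2}(R)$, so $H^0(U)=H^1(U)=0$, and the long exact sequence collapses to $H^0(L(M[0]))\cong M$.

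\emph{Finally, full faithfulness} is immediate from the adjunction $L\dashv j$ of Subsection \ref{subsection of $L$}: for $M\in\mathcal{G}_Z$ and $Y\in\mathcal{H}_\phi$,
\[
\Hom_{\mathcal{H}_\phi}(L(M[0]),Y)\cong\Hom_{\mathcal{U}_\phi}(M[0],Y)=\Hom_{\mathcal{D}(R)}(M[0],Y)\cong\Hom_R(M,H^0(Y)),
\]
the last isomorphism being the standard one coming from $Y\in\mathcal{D}^{\geq0}(R)$. Combining this with the previous paragraph yields the equivalence $H^0\colon\mathcal{H}_\phi\xrightarrow{\sim}R\Mod/\mathcal{T}_Z$ with quasi-inverse $N\rightsquigarrow L(j(N)[0])$. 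The main obstacle I anticipate is the cohomology chase in the middle paragraph, which crucially exploits both the idempotency $\mathbf{R}\Gamma_Z(U)\cong U$ on complexes with $\mathcal{T}_Z$-cohomology and the fact that $Z$-closedness of $M$ pushes $\mathbf{R}\Gamma_Z(M)$ all the way into $\mathcal{D}^{\geq2}(R)$, not merely $\mathcal{D}^{\geq1}(R)$.
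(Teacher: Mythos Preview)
Your characterization of $\mathcal{H}_\phi$ via $\mathbf{R}\Gamma_Z(Y)=0$, your proof that $H^0(Y)$ is $Z$-closed, and your computation $H^0(L(M[0]))\cong M$ for $M\in\mathcal{G}_Z$ are all correct. These parts take a genuinely different route from the paper, which instead invokes Lemmas \ref{lem. bounded heart} and \ref{lem. last homology} for the first assertion and a direct splitting argument for essential surjectivity; your systematic use of $\mathbf{R}\Gamma_Z$ is arguably more conceptual.

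There is, however, a real gap in your final step. The displayed isomorphism
\[
\Hom_{\mathcal{H}_\phi}(L(M[0]),Y)\cong\Hom_R(M,H^0(Y))
\]
is precisely the adjunction $L(?[0])\dashv H^0$. Combined with your middle paragraph (the unit $M\to H^0(L(M[0]))$ is an isomorphism), it shows that $L(?[0])$ is \emph{fully faithful}. But this does not yet make $H^0$ an equivalence: you still need the counit $L(H^0(Y)[0])\to Y$ to be an isomorphism for every $Y\in\mathcal{H}_\phi$ (equivalently, $L(?[0])$ essentially surjective). What you labeled ``essential surjectivity'' establishes the unit isomorphism, not the counit one, and specializing your adjunction isomorphism to $M=H^0(Y')$ only gives full faithfulness of $H^0$ on objects already known to lie in the essential image of $L(?[0])$.

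The gap is easy to close with material you already have. From your first paragraph, $\tau^{>0}(Y)$ has all homology in $\mathcal{T}_Z$, so $\tau^{>0}(Y)[k]\in\mathcal{U}_\phi$ for every $k\in\mathbb{Z}$; in particular $\tau^{>0}(Y)[-2]\in\mathcal{U}_\phi$, whence $L(\tau^{>0}(Y)[-1])=\tau_\phi^{>}\bigl(\tau^{>0}(Y)[-2]\bigr)[1]=0$. Applying Lemma \ref{lem.from triangles to exact sequences} to the triangle $\tau^{>0}(Y)[-1]\to H^0(Y)[0]\to Y\xrightarrow{+}$ then yields an isomorphism $L(H^0(Y)[0])\xrightarrow{\sim}Y$ in $\mathcal{H}_\phi$. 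This is exactly what the paper does at this point; once inserted, your argument is complete.
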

\begin{proof}
For each $Y\in \Ht$, we have a triangle of the form (see Lemma \ref{lem. bounded heart}):
$$\xymatrix{\tau^{>0}(Y)[-1] \ar[r] & H^{0}(Y)[0] \ar[r]^{\hspace{0.7 cm}\lambda_Y} & Y \ar[r]^{+} &  &(\ast) }$$
From Lemma  \ref{lem. last homology}, we get that $H^{0}(Y)$ is a $Z$-closed $R$-module. On the other hand, note that $H^{j}(\tau^{>0}(Y))\in \T_{Z}$, for all $j\in \mathbb{Z}$. Hence, $\tau^{>0}(Y)[k]\in \mathcal{U}_{\phi}$, for all $k\in \mathbb{Z}$. We then have $L(H^{0}(Y)[0])\cong Y$, since $Y\in \mathcal{U}^{\perp}[1]$.\\

Now, if $Y^{'}$ is an arbitrary object in $\Ht$, then applying the functor $\Hom_{\D(R)}(?,Y^{'})$ to the triangle $(\ast)$, we obtain that $\lambda_{Y}^{*}:=\Hom_{\D(R)}(\lambda_{Y},Y^{'}):\Hom_{\D(R)}(Y,Y^{'}) \flecha \Hom_{\D(R)}(H^{0}(Y)[0],Y^{'})$ is an isomorphism, since 
$\Hom_{\D(R)}(?,Y^{'})$ vanishes on $\tau^{>0}(Y)[k]$, for all $k\in \mathbb{Z}$. Similarly, applying the functor $\Hom_{\D(R)}(H^{0}(Y)[0],?)$ to the triangle $(\ast)$ associated to $Y^{'}$ and putting $(\lambda_{Y^{'}})_{*}=\Hom_{\D(R)}(H^{0}(Y)[0],\lambda_{Y^{'}})$, we obtain that $:\Hom_{\frac{R\Mod}{\T_Z}}(H^{0}(Y),H^{0}(Y^{'}))\cong\Hom_{R}(H^{0}(Y),H^{0}(Y^{'})) \cong\Hom_{\D(R)}(H^{0}(Y)[0],H^{0}(Y^{'})[0])\stackrel{(\lambda_{Y^{'}})_{*}}{\longrightarrow}\Hom_{\D(R)}(H^{0}(Y)[0],Y^{'})$ is an isomorphism. It is not difficult to see that the morphism \linebreak $(\lambda_{Y^{'}})^{-1}_{*} \circ \lambda_{Y}^{*}:\Hom_{\D(R)}(Y,Y^{'}) \iso \Hom_{\frac{R\Mod}{\T_Z}}(H^{0}(Y),H^{0}(Y^{'}))$ coincides with the morphism \linebreak $H^{0}: \Hom_{\D(R)}(Y,Y^{'}) \flecha \Hom_{\frac{R\Mod}{\T_Z}}(H^{0}(Y),H^{0}(Y^{'}))$. This shows that the functor $H^{0}:\Ht \flecha \frac{R\Mod}{\T_Z}$ is full and faithful. It is also dense. Indeed, if $F$ is a $Z$-closed $R$-module, then $F[0]\in \U$ and we have a triangle in $\D(R)$:
$$\xymatrix{\tau^{\leq}_{\phi}(F[-1])[1] \ar[r]^{\hspace{0.7 cm}f} & F[0] \ar[r]^{g \hspace{0.3 cm}} & L(F[0]) \ar[r]^{ \hspace{0.5 cm}+} &}$$  
Since $L(F[0])\in \Ht \subseteq \D^{\geq 0}(R)$, we know that $\tau^{\leq}_{\phi}(F[-1])[1] \in \D^{\geq0}(R)$ and that $H^{0}(\tau^{\leq}_{\phi}(F[-1])[1])=H^{1}(\tau^{\leq}_{\phi}(F[-1]))$, which is in $\T_{Z}$, is a submodule of $F$. It follows that $H^{0}(\tau^{\leq}_{\phi}(F[-1])[1])=0$. Thus, we have an exact sequence in $R\Mod$ of the form:
$$\xymatrix{0 \ar[r] & F \ar[r]^{H^{0}(g) \hspace{0.75 cm}} & H^{0}(L(F[0])) \ar[r] & H^{1}(\tau^{\leq}_{\phi}(F[-1])[1]) \ar[r] & 0}$$
Since $H^{1}(\tau^{\leq}_{\phi}(F[-1])[1])\in \T_{Z}$ and $F$ is a $Z$-closed $R$-module, we obtain that the previous sequence split. It follows $H^{1}(\tau^{\leq}_{\phi}(F[-1])[1])=0$ and $H^{0}(L(F[0]))\cong F$.  
\end{proof}

In the rest of the paper we will use the following terminology.

\begin{defi}
Let $S\subseteq W$ be subsets of $\Spec (R)$. We will say that $S$ is \emph{stable under specialization (resp. generalization) within $W$}, when the following property holds:

\vspace*{0.3cm}

($\dagger$) If $\mathbf{p}\subseteq\mathbf{q}$ are prime ideals in $W$ such that $\mathbf{p}\in S$ (resp. $\mathbf{q}\in S$), then $\mathbf{q}\in S$ (resp. $\mathbf{p}\in S$).

\vspace*{0.3cm}
 Note that in such case $S$ need not be stable under specialization (resp. generalization) in $\Spec( R)$. 
\end{defi}

\begin{lemma}\label{lem. product of quotient category}
Let $Z \subseteq \Spec(R)$ be  a sp-subset and suppose that $\Spec(R)\setminus Z=\tilde{V} \bigcupdot \tilde{W}$(disjoint union!),  where $\tilde{V}$ and $\tilde{W}$ are stable under specialization within $\Spec(R)\setminus Z$. If we put $V=\tilde{V}\cup Z$ and $W=\tilde{W} \cup Z$, then the following assertions holds:
\begin{enumerate}
\item[1)] $V$ and $W$ are sp-subsets of $\Spec(R)$;
\item[2)] The category $\frac{\text{R-Mod}}{\T_Z}$ is equivalent to $\frac{\T_V}{\T_Z} \times \frac{\T_W}{\T_Z}$;
\item[3)] We have canonical equivalences of categories $\frac{\mathcal T_V}{\mathcal T_Z}\stackrel{\cong}{\longrightarrow}\frac{R\Mod}{\mathcal T_W}$ and $\frac{\mathcal T_W}{\mathcal T_Z}\stackrel{\cong}{\longrightarrow}\frac{R\Mod}{\mathcal T_V}$. 
\end{enumerate}
\end{lemma}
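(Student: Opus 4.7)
\emph{Part 1} is a routine verification. For $V = \tilde V\cup Z$, take $\p\subseteq\q$ with $\p\in V$. If $\p\in Z$, then $\q\in Z\subseteq V$ since $Z$ is sp. If $\p\in\tilde V$, then either $\q\in Z\subseteq V$, or $\q\in\Spec(R)\setminus Z$, in which case the stability of $\tilde V$ under specialization within $\Spec(R)\setminus Z$ gives $\q\in\tilde V\subseteq V$. The argument for $W$ is symmetric.

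\emph{Part 3.} The key observation is that for any $M\in R\Mod$, the quotient $M/\Gamma_V(M)$ lies in $\T_V^\perp$, so its associated primes avoid $V$; by the hypotheses and Part~1, $\Spec(R)\setminus V=\tilde W$, hence $\mathrm{Ass}(M/\Gamma_V(M))\subseteq\tilde W\subseteq W$. Since $W$ is sp and $\Supp(N)=\bigcup_{\p\in\mathrm{Ass}(N)}V(\p)$ for every module over a Noetherian ring, it follows that $M/\Gamma_V(M)\in\T_W$. Consider the composition $\T_V\hookrightarrow R\Mod\xrightarrow{q_W}R\Mod/\T_W$. It vanishes on $\T_V\cap\T_W=\T_Z$, so by the universal property of the quotient category it factors through a functor $\bar q_W\colon\T_V/\T_Z\longrightarrow R\Mod/\T_W$. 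The preceding observation shows $q_W(\Gamma_V(M))\cong q_W(M)$ for every $M$, giving essential surjectivity. Full faithfulness follows by comparing Hom-sets via the section functors: for $T_1,T_2\in\T_V$, both $\mathrm{Hom}_{\T_V/\T_Z}(q_Z(T_1),q_Z(T_2))$ and $\mathrm{Hom}_{R\Mod/\T_W}(T_1,T_2)$ can be computed as $\varinjlim_{\mathbf a}\mathrm{Hom}_R(\mathbf a T_1,T_2/t(T_2))$ taken over the appropriate Gabriel topology, and the two Gabriel topologies, when restricted to the class of submodules $\mathbf a\subseteq T_1$ with $T_1/\mathbf a T_1\in\T_V$, coincide because $\T_V\cap\T_W=\T_Z$. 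The symmetric equivalence $\T_W/\T_Z\cong R\Mod/\T_V$ is proved analogously.

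\emph{Part 2.} Using Part~3, this amounts to showing that $R\Mod/\T_Z\cong R\Mod/\T_V\times R\Mod/\T_W$. The decomposition functor is
\[
F\colon R\Mod/\T_Z\longrightarrow R\Mod/\T_V\times R\Mod/\T_W,\qquad q_Z(M)\longmapsto\bigl(q_V(M),q_W(M)\bigr),
\]
which is well-defined since $\T_Z\subseteq\T_V\cap\T_W$. The crucial identity is $M=\Gamma_V(M)+\Gamma_W(M)$ for every $R$-module $M$: the quotient $M/(\Gamma_V(M)+\Gamma_W(M))$ is simultaneously a quotient of $M/\Gamma_V(M)\in\T_V^\perp$ and of $M/\Gamma_W(M)\in\T_W^\perp$, so its associated primes avoid $V\cup W=\Spec(R)$ and it must vanish. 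Combined with $\Gamma_V(M)\cap\Gamma_W(M)\in\T_V\cap\T_W=\T_Z$, this yields $q_Z(M)\cong q_Z(\Gamma_V(M))\oplus q_Z(\Gamma_W(M))$ in $R\Mod/\T_Z$, together with the vanishing of Hom between the two summands after localization (because any morphism $\Gamma_V(M)\to\Gamma_W(N)$ has image in $\T_V\cap\T_W=\T_Z$). Transporting this back through $F$ and using Part~3 identifies the two factors with $\T_V/\T_Z$ and $\T_W/\T_Z$ respectively.

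\emph{Main obstacle.} The delicate step is the full faithfulness in Part~3 (and the vanishing-of-Hom claim in Part~2), since morphism sets in the quotient categories are computed via sections and a priori do not simplify in an obvious way. The clean handling of this requires exploiting $\T_V\cap\T_W=\T_Z$ together with the identification $M=\Gamma_V(M)+\Gamma_W(M)$ to match the relevant Gabriel topologies after restricting attention to objects of $\T_V$.
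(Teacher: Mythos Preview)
Your approach is correct and genuinely different from the paper's. The paper proves Part~2 first, working inside the Giraud subcategory $\mathcal{G}_Z$ of $Z$-closed modules: it analyses the minimal injective resolution of a $Z$-closed $V$-torsion module to show that $\Ext^k_{\mathcal{G}_Z}(\tilde{\T}_W,\tilde{\T}_V)=0$ for all $k\geq 0$, and then deduces that the torsion pair $(\tilde{\T}_V,\tilde{\T}_V^\perp)$ in $\mathcal{G}_Z$ splits. Part~3 is then obtained by proving the equality of Giraud subcategories $\T_V\cap\mathcal{G}_Z=\mathcal{G}_W$, using Part~2. Your route is the reverse: you prove Part~3 directly via the torsion radicals and the colimit description of Hom in a quotient category, and then derive Part~2 from the elementary identity $M=\Gamma_V(M)+\Gamma_W(M)$ together with $\Gamma_V(M)\cap\Gamma_W(M)\in\T_Z$. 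Your argument is more elementary (no injective resolutions or Ext computations in $\mathcal{G}_Z$) and arguably more transparent about why $\T_V\cap\T_W=\T_Z$ is the driving fact; the paper's approach, on the other hand, yields the stronger Ext-orthogonality in $\mathcal{G}_Z$ as a by-product.

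One point deserves tightening. In Part~2 you justify Hom-vanishing between the two summands by saying ``any morphism $\Gamma_V(M)\to\Gamma_W(N)$ has image in $\T_Z$''. As written this only treats morphisms lifted from $R\Mod$, not arbitrary morphisms in $R\Mod/\T_Z$. The fix is immediate from your own Part~3 machinery: for $T_V\in\T_V$ and $T_W\in\T_W$, the colimit formula gives $\Hom_{R\Mod/\T_Z}(q_Z(T_V),q_Z(T_W))=\varinjlim\Hom_R(M',T_W/\Gamma_Z(T_W))$ over $M'\leq T_V$ with $T_V/M'\in\T_Z$; each such map has image in $\T_V\cap\T_W=\T_Z$, hence is zero since $T_W/\Gamma_Z(T_W)$ is $Z$-torsionfree. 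Your full-faithfulness sketch in Part~3 is correct once one observes (as you essentially do) that for $T_1,T_2\in\T_V$ one has $\Gamma_W(T_2)=\Gamma_Z(T_2)$ and that a quotient $T_1/M'$ lies in $\T_W$ iff it lies in $\T_Z$, so the two colimit systems literally coincide.
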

\begin{proof}
All throughout the proof, for any sp-subset $Z$ of $\Spec (R)$ we shall identify $R\Mod /\mathcal T_Z$ with the associated Giraud subcategory $\mathcal{G}_Z$ of $R\Mod$ consisting of  the $Z$-closed $R$-modules (see Subsection \ref{sub. localization}). 

1) This assertion is straightforward. \\

2) Clearly $\tilde{\T_{V}}:=q(\T_V)=\frac{\T_V}{\T_Z}$ is a hereditary torsion class in $\frac{R\text{-Mod}}{\T_Z}$ which, as a full subcategory of $R\Mod$, gets identified with $\mathcal T_V\cap\mathcal{G}_Z$. A similar fact is true when replacing $V$ by $W$. 

We claim that $\Hom_{\mathcal{G}_Z}(T_W,T_V)=0=\Ext_{\mathcal{G}_Z}^1(T_W,T_V)=0$, for all $T_W\in\tilde{\T}_W$ and all $T_V\in\tilde{\T}_V$ and, once this is proved, the same will be true with the roles of $V$ and $W$ exchanged. Note that $T_V$ is a $Z$-torsionfree and $V$-torsion $R$-module. Since $\mathcal T_V$ is closed under taking injective envelopes, it follows that the minimal injective resolution of $T_V$ in $R\Mod$ has the form 

$$0\rightarrow T_V\longrightarrow I_{\tilde{V}}^0\longrightarrow  I_{\tilde{V}}^1\oplus I_Z^1\longrightarrow \cdots \longrightarrow  I_{\tilde{V}}^n\oplus I_Z^n\longrightarrow \cdots ,$$
with the convention that $I_{\tilde{V}}^k\in\text{Add}(\oplus_{\mathbf{p}\in\tilde{V}}E(R/\mathbf{p}))$ and $I_Z^k\in\text{Add}(\oplus_{\mathbf{p}\in Z}E(R/\mathbf{p}))$, for all $k\geq 0$. When applying the functor $q:R\Mod\longrightarrow R\Mod/\mathcal T_Z\cong\mathcal{G}_Z$, we get an  injective resolution 

$$0\rightarrow T_V\longrightarrow I_{\tilde{V}}^0\longrightarrow  I_{\tilde{V}}^1\longrightarrow \cdots \longrightarrow  I_{\tilde{V}}^n\longrightarrow \cdots $$
in $\mathcal{G}_Z$. All its terms are then injective $W$-torsionfree $R$-modules, which implies that $\Ext_{\mathcal{G}_Z}^k(T_W,T_V)=0$, for all $k\geq 0$, and our claim follows.

Let now $Y\in\mathcal{G}_Z$  be any object.  If $F\in\mathcal{G}_Z$ is such that $\Hom_{R\Mod/\mathcal T_Z}(q(T),F)=0$, for all $T\in\mathcal T_V$, the adjunction $(q,j)$ gives that $j(F)$ is a $V$-torsionfree $R$-module, so that its injective envelope $E(j(F))$ is in $\text{Add}(\oplus_{\mathbf{p}\in\Spec(R)\setminus V}E(R/\mathbf{p}))=\text{Add}(\oplus_{\mathbf{p}\in\tilde{W}}E(R/\mathbf{p}))$. But then $E(j(F))$  is in $\mathcal T_W$, which implies that $F\in\mathcal T_W\cap\mathcal{G}_Z=\tilde{\T_W}$. By the previous paragraph, we have that $\text{Ext}_{\mathcal{G}_Z}^1(F,?)$ vanishes on $\tilde{\T_V}$. It follows that if $Y\in\mathcal{G}_Z$ is any object and  $0\rightarrow\tilde{T}_V\longrightarrow Y\longrightarrow F\rightarrow 0$ is the canonical exact sequence in $\mathcal{G}_Z$ associated to torsion pair $(\tilde{\T_V},\tilde{\T_V}^\perp)$, then this sequence splits. Therefore $Y$  decomposes as $Y=\tilde{T}\oplus F$, where $\tilde{T}\in\tilde{\T_V}$ and $F\in\tilde{\T_W}$. Assertion 2 is now clear. \\

3) We shall prove that $\mathcal{T}_V\cap\mathcal{G}_Z=\mathcal{G}_W$. From the equivalences of categories $\frac{\mathcal T_V}{\mathcal T_Z}\cong\mathcal{T}_V\cap\mathcal{G}_Z$ and $\frac{R\Mod}{\mathcal T_W}\cong\mathcal{G}_W$ the first 'half' of assertion 3 will follow then automatically. The other 'half' is obtained by symmetry.

 It follows from the arguments in the proof of assertion 2 that if $\tilde{T}_V\in\tilde{\T_V}=\mathcal{T}_V\cap\mathcal{G}_Z$ then $\tilde{T}_V$ is $W$-torsionfree. Let us take now $T_W\in\mathcal{T}_W$ and consider any exact sequence in $R\Mod$

$$0\rightarrow\tilde{T}_V\stackrel{u}{\longrightarrow} M\longrightarrow T_W\rightarrow 0.$$
When applying the functor $q:R\Mod \longrightarrow R\Mod /\mathcal{T}_Z\cong\mathcal{G}_Z$, using the proof of assertion 2 we get a split exact sequence in $\mathcal{G}_Z$.  In particular, we have that $(j\circ q)(u)$ is a section in $R\Mod$. But, bearing in mind that $\tilde{T}_V$ is $Z$-closed, we can identify $(j\circ q)(u)$ with the composition $\tilde{T}_V\stackrel{u}{\longrightarrow} M\stackrel{\mu_M}{\longrightarrow} (j\circ q)(M)$. It follows that $u$ is a section in $R\Mod$, so that $\Ext_R^1(?,\tilde{T}_V)$ vanishes on $\mathcal{T}_W$. Therefore $\tilde{T}_V$ is in $\mathcal{G}_W$. 

Let now take  $Y\in\mathcal{G}_W$. Then $Y$ is $Z$-closed and assertion 2 gives a decomposition $Y=\tilde{T}_V\oplus\tilde{T}_W$, where $\tilde{T}_V\in\tilde{\T_V}$ and $\tilde{T}_W\in\tilde{\T_W}$. But $\tilde{T}_W$ is then $W$-torsion and $W$-closed, which implies that it is zero. Therefore we have $Y\cong\tilde{T}_V\in\mathcal{T}_V\cap\mathcal{G}_Z$. 
\end{proof}

\begin{lemma}\label{lem. q(U) is a t-struc.}
Let $(\mathcal{U},\mathcal{U}^\perp [1])$ be a compactly generated t-structure in $\mathcal D(R)$, let $Z\subseteq\Spec (R)$ be an sp-subset such that $\mathcal{U}_Z:=\{X\in\mathcal{D}(R):$ $\Supp (H^i(X))\subseteq Z\text{, for all }i\in\mathbb{Z}\}$ is contained in $\mathcal{U}$, and let $\mu :R\longrightarrow R_Z$  and $q:\mathcal D(R)\longrightarrow\mathcal{D}(\frac{R\Mod}{\mathcal T_Z})$ be the canonical ring homomorphism and quotient functor, respectively. The following assertions hold:

\begin{enumerate}
\item If $q':\mathcal{D}(R_Z)\longrightarrow\mathcal{D}(\frac{R\Mod}{\mathcal T_Z})$ is the triangulated functor given by the quotient functor $q':R_Z\Mod\longrightarrow\frac{R_Z\Mod}{\mathcal\mu_*^{-1}(T_Z)}\cong\frac{R\Mod}{\mathcal T_Z}$, then the following conditions are equivalent for a complex $M\in\mathcal D(R)$:

\begin{enumerate}
\item $M$ is in $\mathcal{U}$;
\item $q(M)$ is in $q(\mathcal{U})$;
\item $\mu_*(R_Z\otimes_R^\mathbf{L}M)$ is in $\mathcal{U}$;
\item $q'(R_Z\otimes_R^\mathbf{L}M)$ is in $q(\mathcal{U})$. 
\end{enumerate}
\item $(q(\mathcal{U}),q(\mathcal{U}^{\perp})[1])$ is a t-structure in $\D(\frac{R\Mod}{\T_Z})$ and its heart is equivalent to the heart of $(\mathcal{U},\mathcal{U}^\perp [1])$.
\end{enumerate} 
\end{lemma}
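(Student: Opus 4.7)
The starting point is the observation, immediate from $j=\mu_*\circ j'$, that $q=q'\circ\mu^*$; this identifies conditions (b) and (d) of assertion~1 literally. The remainder of the lemma will flow from two localization triangles in $\mathcal{D}(R)$: the unit of $(q,\mathbf{R}j)$ produces a triangle $K_M\to M\to\mathbf{R}jq(M)\to K_M[1]$, and the unit of $(\mu^*,\mu_*)$ produces a triangle $M\to\mu_*(R_Z\otimes_R^{\mathbf L}M)\to C_M\to M[1]$. Applying the exact functor $q$ to each and using $q\circ\mathbf{R}j\cong 1$ (counit of the first adjunction) in one case and $q\circ\mu_*\circ\mu^*=q'\circ\mu^*=q$ in the other, one obtains $q(K_M)=0=q(C_M)$; by exactness of $q$, both $K_M$ and $C_M$ lie in $\mathcal{U}_Z$.

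Assertion~1 is then formal. The equivalence (a)$\Leftrightarrow$(c) follows from the second triangle, the hypothesis $\mathcal{U}_Z\subseteq\mathcal{U}$, and closure of $\mathcal{U}$ under extensions and shifts (note $C_M[-1]\in\mathcal{U}_Z$ as well). For (b)$\Rightarrow$(a), the isomorphism $q(M)\cong q(U)$ with $U\in\mathcal{U}$ gives $\mathbf{R}jq(M)\cong\mathbf{R}jq(U)$, which lies in $\mathcal{U}$ by the first triangle applied to $U$; the first triangle for $M$ then exhibits $M$ as an extension of two objects of $\mathcal{U}$.

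The decisive point for assertion~2 is that $\mathcal{U}_Z\subseteq\mathcal{U}$ upgrades to $\mathcal{U}^\perp\subseteq\mathcal{U}_Z^\perp$, so every $V\in\mathcal{U}^\perp$ is $\mathcal{U}_Z$-local. Hence $\Hom_{\mathcal{D}(R)}(K_V,V)=0$, so the first morphism of the triangle $K_V\to V\to\mathbf{R}jq(V)\to K_V[1]$ vanishes, and the triangle splits as $\mathbf{R}jq(V)\cong V\oplus K_V[1]$. As the image of $\mathbf{R}j$ consists precisely of the $\mathcal{U}_Z$-local objects, $\Hom(K_V[1],\mathbf{R}jq(V))=0$; since $K_V[1]$ is a summand of $\mathbf{R}jq(V)$, this forces $\End(K_V[1])=0$ and hence $K_V=0$. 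Therefore $V\cong\mathbf{R}jq(V)$ for every $V\in\mathcal{U}^\perp$, from which the equality $q(\mathcal{U}^\perp)=q(\mathcal{U})^\perp$ inside $\mathcal{D}(R\Mod/\mathcal{T}_Z)$ follows by adjunction in one direction and the counit isomorphism $X\cong q\mathbf{R}j(X)$ in the other.

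The t-structure axioms for $(q(\mathcal{U}),q(\mathcal{U}^\perp)[1])$ then become routine: closure of $q(\mathcal{U})$ under $[1]$ and direct summands is clear; closure under extensions uses that $\mathbf{R}j$ is triangulated and $\mathbf{R}jq(U)\in\mathcal{U}$ for $U\in\mathcal{U}$; the truncation triangle for $X\in\mathcal{D}(R\Mod/\mathcal{T}_Z)$ is obtained by truncating $\mathbf{R}j(X)$ with respect to $(\mathcal{U},\mathcal{U}^\perp[1])$ in $\mathcal{D}(R)$ and then applying $q$. The heart of the new t-structure is $q(\mathcal{H})$: if $q(U)=q(W)$ with $U\in\mathcal{U}$ and $W\in\mathcal{U}^\perp[1]$, then assertion~1 applied to $W$ gives $W\in\mathcal{U}\cap\mathcal{U}^\perp[1]=\mathcal{H}$. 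Finally, the restriction $q|_\mathcal{H}:\mathcal{H}\to q(\mathcal{H})$ is fully faithful because, for $H_2\in\mathcal{H}$, the shift $H_2[-1]\in\mathcal{U}^\perp$ is $\mathcal{U}_Z$-local, so $\mathbf{R}jq(H_2)=H_2$ and $\Hom(qH_1,qH_2)=\Hom(H_1,\mathbf{R}jqH_2)=\Hom(H_1,H_2)$. The one step that really uses the hypothesis $\mathcal{U}_Z\subseteq\mathcal{U}$ in a non-formal way is the $\mathcal{U}_Z$-locality / splitting argument of the third paragraph; once that is in place, everything else is formal.
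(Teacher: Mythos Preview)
Your proof is correct and follows essentially the same architecture as the paper's: both arguments hinge on the localization triangle $\mathbf{R}\Gamma_Z(M)\to M\to\mathbf{R}jq(M)\stackrel{+}{\to}$ (your $K_M$ is precisely $\mathbf{R}\Gamma_Z(M)$), on the observation that $V\cong\mathbf{R}jq(V)$ for $V\in\mathcal{U}^\perp$ via the splitting argument you spell out, and on producing truncations in $\mathcal{D}(R\Mod/\mathcal{T}_Z)$ by truncating $\mathbf{R}j(X)$ in $\mathcal{D}(R)$ and applying $q$.

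The one place where your route genuinely differs is the verification that $C_M\in\mathcal{U}_Z$. The paper computes this concretely: it writes $C_M\cong C\otimes_R^{\mathbf L}M$ where $C=\mathrm{cone}(R\to R_Z)$, notes that $H^{-1}(C)=\Ker\mu$ and $H^0(C)=\Coker\mu$ lie in $\mathcal{T}_Z$, and then checks that $\Supp(H^k(T\otimes_R^{\mathbf L}M))\subseteq Z$ for any $T\in\mathcal{T}_Z$ by passing to a homotopically projective resolution of $M$. Your argument is more functorial: you deduce $q(C_M)=0$ from the chain of identifications $q\circ\mu_*\circ\mathbf{L}\mu^*\cong q'\circ\mathbf{L}\mu^*\cong q$. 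This is correct, but the step ``hence $q(\eta_M)$ is an isomorphism'' deserves one more sentence: the natural isomorphism $q\cong q'\mathbf{L}\mu^*$ arising from uniqueness of left adjoints to $\mathbf{R}j=\mu_*\mathbf{R}j'$ is \emph{literally} $q(\eta)$ (apply $q$ to the two expressions for the composite unit and use $q\mathbf{R}j\cong 1$, $q'\mathbf{R}j'\cong 1$). Once that is said, your route is cleaner and avoids the explicit support calculation; the paper's route has the advantage of being entirely self-contained.
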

\begin{proof}
1)  We consider the units of the canonical adjoint pairs $(q:\mathcal D(R)\longrightarrow\mathcal{D}(\frac{R\Mod}{\mathcal T_Z}), \mathbf{R}j:\mathcal{D}(\frac{R\Mod}{\mathcal T_Z})\longrightarrow\mathcal D(R))$ and $(\mathbf{L}\mu^*=R_Z\otimes_R^\mathbf{L}?:\mathcal{D}(R)\longrightarrow\mathcal D(R_Z),\mu_*:\mathcal D(R_Z)\longrightarrow \mathcal{D}(R))$, which we denote by $\lambda :1_{\mathcal D(R)}\longrightarrow\mathbf{R}j\circ q$ and $\eta :1_{\mathcal D(R)}\longrightarrow\mu_*\circ\mathbf{L}\mu^*$, respectively. Note that, in order to avoid confusion,  we  divert from  Subsection 2.6, where we denoted by $\mu$ the unit $1_{R-\text{Mod}}\longrightarrow j\circ q$ of the 'abelian' adjunction $(q,j)$. Note also that, for each $M\in\mathcal D(R)$,  the triangle associated with the t-structure $(\mathcal{U}_Z,\mathcal{U}_Z^\perp =\mathcal{U}_Z^\perp[1])$ is  $\mathbf{R}\Gamma_Z(M)\longrightarrow M\stackrel{\lambda_M}{\longrightarrow} (\mathbf{R}j\circ q)(M)\stackrel{+}{\longrightarrow}$ (see \cite[Section 2]{AJSo2} and \cite[Section 1.6]{AJS}). By hypothesis, we have that $\mathbf{R}\Gamma_Z(M)\in\mathcal{U}$, which implies that  $M$ is in $\mathcal{U}$ if, and only if, so is $(Rj\circ q)(M)$. But $q\circ\mathbf{R}j$ is the identity on objects. We then get that $M\in\mathcal{U}$ if and only if $q(M)\cong q\circ\mathbf{R}j\circ q(M)\in q(\mathcal{U})$.

We next consider the unit map $\lambda_R:R\longrightarrow\mu_*(R_Z)=(\mu_*\circ\mathbf{L}\mu^*)(R)$ in $\mathcal D(R)$, which is just the ring homomorphism $\mu :R\longrightarrow R_Z$ viewed as morphism in $\mathcal{D}(R)$. 
Let us  complete it to a triangle $R\stackrel{\mu}{\longrightarrow}\mu_*(R_Z)\longrightarrow C\stackrel{+}{\longrightarrow}$ in $\mathcal D(R)$ (*). 
We then have $H^k(C)=0$, for $k\neq -1,0$, $H^{-1}(C)=\Ker (\mu )$ and $H^0(C)=\Coker (\mu )$, so that we also have a triangle $\text{Ker}(\mu )[1]\longrightarrow C\longrightarrow\text{Ker}(\mu)[0]\stackrel{+}{\longrightarrow}$ (**).   We claim that $\text{Supp}(H^k(C\otimes_R^\mathbf{L}M))\subseteq Z$, for each $M\in\mathcal{D}(R)$ and all $k\in\mathbb{Z}$. Since $\text{Ker}(\mu )$ and $\text{Coker}(\mu )$ are in $\mathcal{T}_Z$, using the triangle (**),  it is enough to check that $\text{Supp}(H^k(T\otimes_R^\mathbf{L}M))\subseteq Z$, for all  $T\in\mathcal{T}_Z$, $M\in\mathcal{D}(R)$ and $k\in\mathbb{Z}$. But, replacing $M$ by its homotopically projective resolution, we can assume that $M$ is homotopically projective (i.e. that $\text{Hom}_{\mathcal{K}(R)}(M,Y)=0$, for acyclic complex $Y$), in which case $T\otimes_R^\mathbf{L}M=T\otimes_RM$ is the usual tensor product. It is a standard fact that $\text{Supp}(T\otimes_RM^k)\subseteq \text{Supp}(T)\subseteq Z$,   for all $k\in\mathbb{Z}$ (see \cite[Exercise 3.19(iv)]{AM}), from which our claim follows immediately. 
 Moreover, we have an isomorphism $\mu_*(R_Z)\otimes_R^LM\cong (\mu_*\circ\mathbf{L}\mu^*)(M)$ in $\mathcal D(R)$. By applying $?\otimes_R^\mathbf{L}M$ to the triangle (*), we get a triangle in $\mathcal D(R)$ which is isomorphic to

$$M\stackrel{\eta_M}{\longrightarrow}(\mu_*\circ\mathbf{L}\mu^*)(M)\longrightarrow C\otimes_R^\mathbf{L}M\stackrel{+}{\longrightarrow}.$$
Arguing as in the previous paragraph, we deduce that $M\in\mathcal{U}$ if, and only if, $\mu_{*}(R_Z\otimes_R^\mathbf{L}M)=(\mu_{*}\circ\mathbf{L}\mu^{*})(M)\in\mathcal{U}$. This is turn is the case if, and only if, $(q\circ\mu_{*})(R_Z\otimes_R^\mathbf{L}M)$ is in $q(\mathcal{U})$. But the composition of exact functors $R_Z\Mod\stackrel{\mu_{*}}{\longrightarrow}R\Mod\stackrel{q}{\longrightarrow} \frac{R\Mod}{\mathcal T_Z}$ is naturally isomorphic to the localization functor $q':R_Z\Mod \longrightarrow  \frac{R\Mod}{\mathcal T_Z}$ and, since the three involved functors are exact, we also have a natural isomorphism $q\circ\mu_*\cong q'$ of their extensions to the respective derived categories. We then get that $M$ is in $\mathcal U$ if, and only if, $q'(R_Z\otimes_R^\mathbf{L}M)$ is in $q(\mathcal{U})$.  

\vspace*{0.3cm}

2) We put $\mathcal{U}=\mathcal{U}_\phi$, where $\phi$ is an sp-filtration of $\Spec(R)$. Note that the hypothesis implies that $Z\subseteq\bigcap_{i\in\mathbf{Z}}\phi (i)$. If $Y\in\mathcal{U}_\phi^\perp$, we then get a triangle $\mathbf{R}\Gamma_Z(Y)\stackrel{0}{\longrightarrow}Y\stackrel{\lambda_Y}{\longrightarrow} (\mathbf{R}j\circ q)(Y)\stackrel{+}{\longrightarrow}$ since $\mathcal{U}_Z\subseteq\mathcal{U}_\phi$. This immediately implies that $\lambda_Y$ is an isomorphism since $(\mathbf{R}j\circ q)(Y)\in\mathcal{U}_Z^\perp$. If now $X\in\mathcal{U}_\phi$ then, using adjunction,  we get a chain of isomorphisms: 

$$\Hom_{\D(\frac{R\Mod}{\T_Z})}(q(X),q(Y))\cong \Hom_{\D(R)}(X, (\mathbf{R}j \circ q)(Y))\cong \Hom_{\D(R)}(X,Y)=0.$$

 On the other hand, if $N \in \D(\frac{R\Mod}{\T_Z})$, then we get a triangle
$$\xymatrix{ \tau^{\leq}_{\phi}(\mathbf{R}j(N)) \ar[r] & \mathbf{R}j (N) \ar[r] & \tau^{>}_{\phi}(\mathbf{R}j(N)) \ar[r]^{\hspace{0.8 cm}+} & }$$
in $\D(R)$, from which we get a triangle in $\D(\frac{R\Mod}{\T_Z})$:
$$\xymatrix{q(\tau^{\leq}_{\phi}(\mathbf{R}j(N))) \ar[r] & q(\mathbf{R}j (N))\cong N \ar[r] & q(\tau^{>}_{\phi}(\mathbf{R}j(N))) \ar[r]^{\hspace{0.8 cm}+} & }$$ 
where the outer terms are in $q(\U)$ and $q(\U^{\perp})$ respectively.  This proves that $(q(\mathcal{U}),q(\mathcal{U}^\perp ) [1])$ is a t-structure in $\D(\frac{R\Mod}{\T_Z})$.

We now have a commutative diagram 
$$\xymatrix{\Ht \ar@{-->}[r]^{\tilde{q}\hspace{0.15 cm}} \ar@{^(->}[d]& \mathcal{H}_{q(\phi)} \ar@{^(->}[d]\\ \D(R) \ar[r]^{q\hspace{0.3 cm}} & \D(\frac{R\Mod}{\T_Z})}$$
We shall prove that the upper horizontal arrow is an equivalence of categories. We will start by showing that $\tilde{q}$ is dense. Let $M\in \mathcal{H}_{q(\phi)}$, we then have $M\cong q(U)$, for some $U\in \U$ and, by the first paragraph of this proof, we can and shall assume that $\lambda_U:U \flecha (\mathbf{R}j \circ q) (U)$ is an isomorphism. We claim that  $U\in \Ht$, for which we just need to see that $U[-1]\in \U^{\perp}$. Indeed, for each $U^{'}\in U$, we have the following isomorphisms $\Hom_{\D(R)}(U^{'},U[-1])\cong \Hom_{\D(R)}(U^{'},(\mathbf{R}j \circ q)(U)[-1]) \cong \Hom_{\D(\frac{R\Mod}{\T_Z})}(q(U^{'}),q(U)[-1])=0$. This is zero since $q(U^{'}) \in q(\U)$ and $q(U)\in \mathcal{H}_{q(\phi)}$. \\

Finally, if $M,N$ are in $\Ht$, then we get an isomorphism $\lambda_{N[-1]}:N[-1] \iso (\mathbf{R} j \circ q)(N[-1])$ (since $N[-1]\in \U^{\perp}$). Moreover, we have the following chain of isomorphisms:
$$\xymatrix{\Hom_{\D(R)}(M,N) \ar[r]^{\sim \hspace{0.75cm}} & \Hom_{\D(R)}(M[-1],N[-1]) \ar[r]^{(\lambda_{N[-1]})_{*} \hspace{0.75 cm}} & \Hom_{\D(R)}(M[-1],(\mathbf{R}j \circ q)(N[-1])) \ar[d]^{\wr} \\ & & \Hom_{\D(\frac{R\Mod}{\T_Z})}(q(M[-1]), q(N[-1])) \ar[d]^{\wr} \\ & & \Hom_{\D(\frac{R\Mod}{\T_Z})}(q(M), q(N))} $$
It is not difficult to see that the resulting isomorphism coincides with $\xymatrix{\Hom_{\D(R)}(M,N) \ar[r]^{\tilde{q}(?) \hspace{0.8 cm}} & \Hom_{\D(\frac{R\Mod}{\T_Z})}(q(M),q(N))}$. Therefore $\tilde{q}$ is full and faithful.
\end{proof}

\begin{defi}
An sp-subset $Z\subseteq\Spec (R)$ will be called \emph{perfect} when the ring of fractions $R_Z$ is flat as an $R$-module  and $Z$ consists of the prime ideals $\mathbf{p}$ such that $\mu (\mathbf{p})R_Z=R_Z$.
A (commutative) ring $A$ will be called a \emph{perfect localization} of $R$ when there is a perfect sp-subset $Z$ of $\Spec (R)$ such that $A\cong R_Z$.  
\end{defi}

\begin{rem} \label{rem.perfect localization}
An sp-subset $Z\subseteq\Spec (R)$ is perfect if, and only if, the canonical functor $j':\frac{R\Mod}{\mathcal T_Z}\longrightarrow R_Z\Mod$ is an equivalence of categories, which in turn is equivalent to saying that $\mu_*^{-1}(\mathcal T_Z)=0$ (see \cite[Chapter XI]{S}).   
\end{rem}


\begin{prop} \label{prop.perfect localizations}
Let $Z$ be an sp-subset of $\Spec (R)$. The following assertions are equivalent:

\begin{enumerate}
\item $R\Mod/\mathcal T_Z$ is a module category;
\item $Z$ is  perfect.
\end{enumerate}
\end{prop}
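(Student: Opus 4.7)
The easy implication $(2) \Rightarrow (1)$ follows immediately from Remark \ref{rem.perfect localization}: perfectness of $Z$ makes $j'\colon R\Mod/\mathcal T_Z \to R_Z\Mod$ an equivalence, so $R\Mod/\mathcal T_Z \cong R_Z\Mod$ is a module category.

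For the nontrivial direction $(1) \Rightarrow (2)$, set $\mathcal G := R\Mod/\mathcal T_Z$ and identify it with the associated Giraud subcategory. By Remark \ref{rem.perfect localization}, it suffices to show that the fully faithful functor $j'\colon \mathcal G \to R_Z\Mod$ is an equivalence. I would first observe that $q(R)$ is always a generator of $\mathcal G$: any $M \cong q(jM) \in \mathcal G$ is a quotient of $q(R)^{(I)}$, obtained by applying the exact cocontinuous functor $q$ to an epimorphism $R^{(I)} \twoheadrightarrow jM$ in $R\Mod$. Moreover $\End_{\mathcal G}(q(R)) = \Hom_R(R, jq(R)) = R_Z$, and by the Gabriel--Popescu theorem (\cite[Theorem X.4.1]{S}) the functor $\Hom_{\mathcal G}(q(R),-)\colon \mathcal G \to R_Z\Mod$ is fully faithful with exact left adjoint; it coincides (up to natural isomorphism) with $j'$. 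In view of Proposition \ref{prop.Gabriel-Mitchell}, the problem thus reduces to proving that $q(R)$ is in fact a progenerator of $\mathcal G$.

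To achieve this, I would exploit the module-category hypothesis: pick a progenerator $P$ of $\mathcal G$, set $A := \End_\mathcal G(P)^{op}$, so that $\Phi := \Hom_\mathcal G(P,-)\colon \mathcal G \xrightarrow{\simeq} A\Mod$ is an equivalence, and consider $F := \Phi\circ q\colon R\Mod \to A\Mod$. This is exact and cocontinuous, so by the Eilenberg--Watts representation theorem $F \cong N \otimes_R -$ where $N := F(R) = \Phi(q(R))$ is an $A$-$R$-bimodule; exactness of $F$ forces $N$ to be flat as right $R$-module. Careful bookkeeping of the two bimodule structures identifies $N$, as an $R$-module, with $R_Z$ (viewed via the canonical map $\mu\colon R \to R_Z$), so that $R_Z$ is $R$-flat. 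The remaining condition in the definition of perfect, that $Z = \{\mathbf{p} \in \Spec(R) : \mu(\mathbf{p})R_Z = R_Z\}$, follows from the chain $\mathbf{p} \in Z \Leftrightarrow R/\mathbf{p} \in \mathcal T_Z \Leftrightarrow q(R/\mathbf{p}) = 0 \Leftrightarrow R_Z/\mu(\mathbf{p})R_Z = R_Z \otimes_R R/\mathbf{p} = 0$, the last step combining flatness with the vanishing of the torsion class $\mu_*^{-1}(\mathcal T_Z)$ in $R_Z\Mod$ (forced by the same module-category hypothesis). The main obstacle is the Eilenberg--Watts bookkeeping step: precisely identifying the $A$-$R$-bimodule $F(R)$ with $R_Z$ and translating the resulting flatness into perfectness of $Z$, all of which hinges on the commutativity of $R_Z$ and its dual role as $\End_{\mathcal G}(q(R))^{op}$ and as the target of $\mu$.
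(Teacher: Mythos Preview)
Your easy direction and the reduction ``it suffices to show that $q(R)$ is a progenerator of $\mathcal{G}_Z$'' are fine and match the paper. The gap is in how you try to establish this. You assert that ``careful bookkeeping identifies $N=F(R)$, as an $R$-module, with $R_Z$''. This is false already in the trivial case $Z=\emptyset$: there $\mathcal{G}=R\Mod$, $R_Z=R$, and taking $P=R^2$ gives $N=\Hom_R(R^2,R)\cong R^2\neq R$. In general $N=\Hom_{\mathcal G}(P,q(R))$ depends on the chosen progenerator $P$, and there is no reason for it to be $R_Z$. What Eilenberg--Watts actually gives you is that $N$ is a flat $R$-module whose $R$-action factors through $R_Z$; but flatness of some $R_Z$-module $N$ over $R$ does not by itself force $R_Z$ to be $R$-flat. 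Your final chain $\mathbf{p}\in Z\Leftrightarrow R_Z\otimes_R R/\mathbf{p}=0$ similarly presupposes that $q(-)$ is computed by $R_Z\otimes_R-$, which is exactly the perfectness you are trying to prove; without it you only get $\mathbf{p}\in Z\Leftrightarrow N/\mathbf{p}N=0$. The parenthetical ``vanishing of $\mu_*^{-1}(\mathcal T_Z)$ forced by the module-category hypothesis'' is likewise not argued and is essentially the conclusion.

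The paper avoids these difficulties by never leaving $\mathcal G_Z$: it shows directly that $R_Z$ is a progenerator. The key extra inputs are (i) $R_Z$ is a \emph{Noetherian} object of $\mathcal G_Z$, so any progenerator $P$ admits an epimorphism $\epsilon:P\to R_Z$ in $\mathcal G_Z$ and is itself a finitely generated projective $R_Z$-module (as a summand of $R_Z^m$); and (ii) $\mathcal G_Z$ is closed under coproducts in $R_Z\Mod$ (Lemma~\ref{lem.Ext-orthogonal}), so the trace ideal $\mathbf{a}=\mathrm{tr}_P(R_Z)$ computed in $R_Z\Mod$ equals $\Im(\epsilon)$. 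Since $\mathbf{a}$ is then a finitely generated idempotent ideal of the commutative ring $R_Z$, it is generated by an idempotent $e$, and $R_Z(1-e)\cong\Coker(\epsilon)\in\mu_*^{-1}(\mathcal T_Z)$ forces $e=1$. Hence $\epsilon$ is an $R_Z$-epimorphism, $\mathrm{add}(R_Z)=\mathrm{add}(P)$, and $R_Z$ is a progenerator of $\mathcal G_Z$, which gives perfectness via Remark~\ref{rem.perfect localization}. If you want to salvage your approach, the missing ingredient is precisely an argument that produces $R_Z$ as a direct summand of a coproduct of copies of $N$ \emph{as $R$-modules}; the paper's trace-ideal argument is one way to obtain the analogous statement for $P$.
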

\begin{proof}
$2)\Longrightarrow 1)$ is clear.

$1)\Longrightarrow 2)$ As usual, whenever necessary, we identify $R\Mod/\mathcal T_Z$ with the Giraud subcategory $\mathcal{G}_Z$. Then the lattice of subobjects of $R_Z$ in this category 'is' a sublattice of the lattice of ideals of $R$ (see \cite[Proposition IX.4.3 and Corollary IX.4.4]{S}). In particular, $R_Z$ is a Noetherian generator of $\mathcal{G}_Z$. 

Let now $P$ be a progenerator of $\mathcal{G}_Z$. Then we have an epimorphism $\epsilon :P^n\longrightarrow R_Z$ in $\mathcal{G}_Z$, for some integer $n>0$. Without loss of generality, we can assume that $n=1$. Viewed as a morphism in $R_Z\Mod$, we have that $\Coker (\epsilon )$ is a torsion module (i.e. it is in $\mu_*^{-1}(\mathcal T_Z)$). On the other hand, we have an epimorphism $p:R_Z^m\longrightarrow P$ in $\mathcal{G}_Z$, for some integer $m>0$. Then $p$ is a retraction in this category, which implies that it is also a retraction in $R_Z\Mod$. It follows that $P$ is a  finitely generated projective $R_Z$-module. 

Note that,  by Lemma \ref{lem.Ext-orthogonal}, we have an equality of full subcategories of $R\Mod$:

$$\mathcal{G}_Z=(\bigcap_{\mathbf{p}\in Z}\text{Ker}(\Hom_R(R/\mathbf{p},?)))\cap (\bigcap_{\mathbf{p}\in Z}\text{Ker}(\Ext_R^1(R/\mathbf{p},?)). $$
It follows that $\mathcal{G}_Z$ is closed under taking coproducts in $R\Mod$ and, hence, also in $R_Z\Mod$. That is, the inclusion $\mathcal{G}_Z\hookrightarrow R_Z\Mod$ preserves coproducts. Let then consider the canonical morphism of $R_Z$-modules $g:P^{(\Hom_{R_Z}(P,R_Z))}\longrightarrow R_Z$, whose image $\mathbf{a}:=\Im (g)$ is the trace of $P$ in $R_Z$ and, hence, it is an idempotent ideal of $R_Z$. Since $\epsilon$ is an epimorphism in $\mathcal{G}_Z$ and $P^{(\Hom_{R_Z}(P,R_Z))}$ is a projective object of this category, we get that $g$ factors through $\epsilon$, so that we have a morphism $f:P^{(\Hom_{R_Z}(P,R_Z))}\flecha P$ such that $\epsilon\circ f=g$. It then follows that 

$$\Im (\epsilon )\subseteq tr_P(R_Z)=\mathbf{a}=\Im (g)=\Im (\epsilon\circ f)\subseteq\Im (\epsilon ),$$
which implies that all these inclusions are equalities. We then get that $\mathbf{a}=\Im (\epsilon )$, so that   $\mathbf{a}$ is a finitely generated ideal of $R_Z$.   By \cite[Lemma VI.8.6]{S}, we know that   $\mathbf{a}=R_Ze$, for some idempotent $e\in R_Z$. But then $R_Z(1-e)\cong\Coker (\epsilon )$ is a torsion ideal of $R_Z$, whence necessarily zero. Therefore we get that $e=1$, so that $\epsilon$ is also an epimorphism in $R_Z\Mod$. We then have that $\text{add}(R_Z)=\text{add}(P)$, so that $R_Z$ is also a progenerator of $\mathcal{G}_Z$. Bearing in mind that we have a ring isomorphism $R_Z\cong\End_{\mathcal{G}_Z}(R_Z)$ and that $R_Z$ is commutative, we get an equivalence of categories:

$$\Hom_{R_Z}(R_Z,?)=\Hom_{\mathcal{G}_Z}(R_Z,?):\mathcal{G}_Z \iso \End_{\mathcal{G}_Z}(R_Z)\Mod\stackrel{\cong}{\longleftrightarrow}R_Z\Mod.$$ 
Clearly, this functor is naturally isomorphic to the inclusion functor $\mathcal{G}_Z\longrightarrow R_Z\Mod$ and, by Remark \ref{rem.perfect localization}, we conclude that $Z$ is perfect. 
\end{proof}

We are ready to prove the second main result of the paper. In its statement and proof,   $\Supp (M)$ will mean always the support of $M$ as an $R$-module, even if $M$ is an $R_Z$-module. 

\begin{teor} \label{teor.modular main theorem1}
Let $R$ be a commutative Noetherian ring, let $(\mathcal{U},\mathcal{U}^\perp [1])$ be a compactly generated t-structure in $\mathcal D(R)$ such that $\mathcal{U}\neq\mathcal{U}[-1]$ and let $\mathcal{H}$ be its heart. The following assertions are equivalent:
\begin{enumerate}
\item[1)] $\mathcal{H}$ is a module category;
\item[2)] There are a possibly empty stable under specialization subset $Z\subsetneq\Spec(R)$, a family  $\{e_1,\dots,e_t\}$ of nonzero orthogonal idempotents of the ring of fractions $R_Z$  and integers $m_1<m_2< \dots<m_t$ such that:

\begin{enumerate}
\item If $N$ is a $R_Z$-module such that $\Supp (N)\subseteq Z$, then $e_kN=0$ for $k=1,\dots,t$;

\item A complex $U\in\mathcal D(R)$ is in $\mathcal{U}$ if, and only if,   $\Supp (e_0H^j(R_Z\otimes^\mathbf{L}_RU))\subseteq Z$  for each $j\in\mathbb{Z}$, where $e_0=1-\sum_{1\leq k\leq t}e_k$, and 
$e_k(R_Z\otimes_R^\mathbf{L}U)\in\mathcal{D}^{\leq m_k}(R_Ze_k)$ for all $k=1,\dots,t$.
\end{enumerate}
	\end{enumerate}
In that case,  we have an equivalence of categories $\mathcal{H}\stackrel{\cong}{\longrightarrow}A\Mod$, where $A=\oplus_{1\leq k\leq t}R_Ze_k=R_Z(1-e_0)$, and $A$ is a perfect localization of $R$. 
\end{teor}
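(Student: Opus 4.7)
The plan splits along the two implications, with Lemma \ref{lem. q(U) is a t-struc.} as the central reduction: whenever $\mathcal{U}_Z\subseteq\mathcal{U}$, the heart $\mathcal{H}$ is identified with that of the image t-structure in $\mathcal{D}(R\Mod/\mathcal{T}_Z)$.

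For $(2)\Rightarrow(1)$: condition (2b) applied to stalk complexes $R/\mathbf{q}[-j]$ with $\mathbf{q}\in Z$ gives $\mathcal{U}_Z\subseteq\mathcal{U}$, so Lemma \ref{lem. q(U) is a t-struc.} identifies $\mathcal{H}$ with the heart of an image t-structure in $\mathcal{D}(R\Mod/\mathcal{T}_Z)$. Condition (2a) forces the $\mathcal{T}_Z$-torsion part of any $R_Z$-module to be annihilated by $e_1,\ldots,e_t$, so the projection $R_Z\twoheadrightarrow A=R_Z(1-e_0)$ descends to an equivalence $R\Mod/\mathcal{T}_Z\cong A\Mod$, making $A$ a perfect localization of $R$ by Proposition \ref{prop.perfect localizations}. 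Using the ring decomposition $A=\bigoplus_{k=1}^{t}R_Z e_k$, condition (2b) becomes the identification of the image aisle with $\bigoplus_{k}\mathcal{D}^{\leq m_k}(R_Z e_k)$, and Lemma \ref{lem. product of aisles} delivers $\mathcal{H}\cong\prod_{k}R_Z e_k\Mod\cong A\Mod$.

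For $(1)\Rightarrow(2)$: let $\phi$ be the sp-filtration attached to $\mathcal{U}$ and set $Z:=\bigcap_{i}\phi(i)$; this is an sp-subset properly contained in $\Spec(R)$, since the hypothesis $\mathcal{U}\neq\mathcal{U}[-1]$ rules out $\phi$ being constant. Sp-closure of each $\phi(i)$ gives $\phi(i)\cap\Spec(R_\mathbf{p})\neq\emptyset\iff\mathbf{p}\in\phi(i)$, so $\phi_\mathbf{p}$ is eventually trivial precisely when $\mathbf{p}\notin Z$. For such $\mathbf{p}$, Corollary \ref{cor.modular heart via localization} together with Proposition \ref{prop.modular heart connected case} (applied to the local, hence connected, ring $R_\mathbf{p}$) produces a unique integer $m(\mathbf{p})$ with $\phi_\mathbf{p}(m(\mathbf{p}))=\Spec(R_\mathbf{p})$ and $\phi_\mathbf{p}(m(\mathbf{p})+1)=\emptyset$. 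A direct check using sp-closure shows $m(\mathbf{p})=m(\mathbf{p}')$ whenever $\mathbf{p}\subseteq\mathbf{p}'$ both lie outside $Z$, so the fibers of $m$ are stable under both specialization and generalization within $\Spec(R)\setminus Z$; the Noetherian structure of $\Spec(R)$ then forces only finitely many to be nonempty, say indexed by $m_1<\cdots<m_t$ with corresponding fibers $V_{m_1},\ldots,V_{m_t}$.

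Each $V_{m_k}\cup Z$ is sp-closed, so iterating Lemma \ref{lem. product of quotient category} on the partition $\Spec(R)\setminus Z=\bigsqcup_k V_{m_k}$ yields a product decomposition $R\Mod/\mathcal{T}_Z\cong\prod_{k=1}^{t}R\Mod/\mathcal{T}_{W_k}$, where $W_k:=Z\cup\bigcup_{j\neq k}V_{m_j}$. On the $k$-th piece the restricted filtration is of the form covered by Proposition \ref{prop.heart equivalent to quotient} (shifted by $m_k$), so the corresponding heart-factor is $R\Mod/\mathcal{T}_{W_k}$; being a factor of the module category $\mathcal{H}$, it is itself a module category, whence by Proposition \ref{prop.perfect localizations} each $W_k$ is perfect. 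The orthogonal idempotents $e_1,\ldots,e_t\in R_Z$ arise as the canonical images of the identities $1\in R_{W_k}$, and $e_0:=1-\sum_{k\geq 1}e_k$ absorbs the residual non-perfect torsion of $R_Z$; the combined localizations identify $A=R_Z(1-e_0)\cong\prod_k R_{W_k}$ as a perfect localization of $R$, and conditions (2a)-(2b) follow by unpacking the definitions. The most delicate step is this bridge from local to global: converting the pointwise function $m$ into orthogonal idempotents of $R_Z$ with the correct finiteness and exhibiting the perfect localization $A$. The rigidity of Proposition \ref{prop.modular heart connected case} is crucial here, as it promotes a priori arbitrary local shifts into a chain-constant invariant whose fibers are clopen and hence (by Noetherianness) finitely many; Lemma \ref{lem. product of quotient category} then performs the algebraic translation into product decompositions, and Proposition \ref{prop.perfect localizations} together with Proposition \ref{prop.heart equivalent to quotient} yields the final identification $A=R_Z(1-e_0)$.
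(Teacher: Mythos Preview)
Your overall strategy matches the paper's, but there is a genuine gap that appears in both directions and stems from the same misconception: you systematically underestimate the role of the idempotent $e_0$.

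In $(2)\Rightarrow(1)$ you write that condition (2a) ``forces the $\mathcal{T}_Z$-torsion part of any $R_Z$-module to be annihilated by $e_1,\ldots,e_t$, so the projection $R_Z\twoheadrightarrow A=R_Z(1-e_0)$ descends to an equivalence $R\Mod/\mathcal{T}_Z\cong A\Mod$.'' This implication is false. Condition (2a) says only that $\mu_*^{-1}(\mathcal{T}_Z)\subseteq R_Ze_0\Mod$, not that $R_Ze_0\Mod\subseteq\mu_*^{-1}(\mathcal{T}_Z)$. The correct decomposition is
\[
\frac{R\Mod}{\mathcal{T}_Z}\;\cong\;\frac{R_Ze_0\Mod}{\mu_*^{-1}(\mathcal{T}_Z)}\times R_Ze_1\Mod\times\cdots\times R_Ze_t\Mod,
\]
and the first factor need not vanish (Example~5.11 with $r<|\text{MinSpec}(R)|$ already shows this). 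The heart $\mathcal{H}$ ends up equivalent to $A\Mod$ not because the $e_0$-factor disappears from the quotient category, but because condition (2b) forces the aisle $q(\mathcal{U})$ to be \emph{zero} on that factor, so its contribution to the heart is trivial. Your derivation that $A$ is a perfect localization of $R$ is likewise invalid as written; the paper obtains this by producing a larger torsion class $\mathcal{T}_1\supseteq\mathcal{T}_Z$ with $R\Mod/\mathcal{T}_1\cong A\Mod$ and then invoking Proposition~\ref{prop.perfect localizations}.

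The same oversight resurfaces in $(1)\Rightarrow(2)$. Your function $m(\mathbf{p})$ is produced via Proposition~\ref{prop.modular heart connected case}, which requires $\phi_\mathbf{p}(i)\neq\emptyset$ for some $i$; but a prime $\mathbf{p}\notin\bigcup_i\phi(i)$ has $\phi_\mathbf{p}$ identically empty, so $m(\mathbf{p})$ is undefined there. Consequently your partition $\Spec(R)\setminus Z=\bigsqcup_k V_{m_k}$ omits the set $\tilde{V}_0:=\Spec(R)\setminus\bigcup_i\phi(i)$, which can be nonempty. In the paper this piece is carried along explicitly and is precisely what the idempotent $e_0$ records; your remark that ``$e_0$ absorbs the residual non-perfect torsion of $R_Z$'' does not account for it.
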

\begin{proof}
$2)\Longrightarrow 1)$ Our hypotheses  imply the existence of a decomposition 
$$\frac{R\Mod}{\mathcal T_Z}\cong\frac{R_Z\Mod}{\mathcal\mu_{*}^{-1}(T_Z)}\cong \frac{R_Ze_0\Mod}{\mu_*^{-1}(T_Z)}\times R_Ze_1\Mod\times \dots \times R_Ze_t\Mod, $$ and this decomposition passes to the corresponding derived categories

$$\mathcal D(\frac{R\Mod}{\mathcal T_Z})\cong\mathcal D(\frac{R_Z\Mod}{\mathcal\mu_*^{-1}(T_Z)})\cong\mathcal D( \frac{R_Ze_0\Mod}{\mu_*^{-1}(T_Z)})\times\mathcal D(R_Ze_1\Mod)\times \dots \times\mathcal D(R_Ze_t\Mod).$$

If $q:\mathcal{D}(R)\longrightarrow\mathcal{D}(\frac{R\Mod}{\mathcal T_Z})$ is the canonical functor, then we see that $q(\mathcal{U})$ is the direct sum of aisles $0\oplus \underset{1\leq k\leq t}{\oplus}\mathcal{D}^{\leq m_k}(R_Ze_k\Mod)=0\oplus \underset{1\leq k\leq t}{\oplus}\mathcal{D}^{\leq m_k}(R_Ze_k)$ with respect to the last decomposition. It follows from Lemmas \ref{lem. product of aisles} and \ref{lem. q(U) is a t-struc.} that $\mathcal{H}$ is equivalent to $R_Ze_1\Mod\times \dots \times R_Ze_t\Mod\cong A\Mod$, where $A=R_Z(1-e_0)$.

Although not needed for the proof of this implication, we will check that $A=R_Z(1-e_0)$ is a perfect localization of $R$. For that, we consider the canonical  compositions of functors 
$$R\Mod\stackrel{q}{\longrightarrow}\frac{R\Mod}{\mathcal T_Z}\cong\frac{R_Z\Mod}{\mathcal \mu_*^{-1}(T_Z)}\stackrel{proj.}{\longrightarrow}R_Ze_1\Mod\times \dots \times R_Ze_t\Mod\cong A\Mod$$ and 

$$R\Mod\stackrel{q}{\longrightarrow}\frac{R\Mod}{\mathcal T_Z}\cong\frac{R_Z\Mod}{\mathcal \mu_*^{-1}(T_Z)}\stackrel{proj.}{\longrightarrow}\frac{R_Ze_0\Mod}{\mu_{*}^{-1}(\mathcal T_Z)}.$$ Their kernels, which we denote by  $\mathcal T_1$ and $\mathcal T_2$, are hereditary torsion classes in $R\Mod$ which contain $\mathcal{T}_Z$ and satisfy that $\frac{\mathcal T_1}{\mathcal T_Z}\cong\frac{R_Ze_0\Mod}{\mu_{*}^{-1}(\mathcal T_Z)}$ and $\frac{\mathcal T_2}{\mathcal T_Z}\cong R_Ze_1\Mod\times \dots\times R_Ze_t\Mod\cong A\Mod$, so that we also have  
$\frac{R\Mod}{\mathcal T_Z}\cong\frac{\mathcal T_1}{\mathcal T_Z}\times\frac{\mathcal T_2}{\mathcal T_Z}$. Lemma \ref{lem. product of quotient category} and its proof show that then $\frac{R\Mod}{\mathcal T_1}\cong A\Mod$. By Proposition \ref{prop.perfect localizations} , we conclude that if $R_1$ denotes the ring of fractions of $R$ with respect to the torsion pair $(\mathcal T_1,\mathcal T_1^\perp)$, then $R_1$ is a (commutative) ring Morita equivalent to $A$. But Morita equivalent commutative rings are isomorphic. 

$1)\Longrightarrow 2)$  By Lemma \ref{lem. product of aisles}, we can assume, without loss of generality, that $R$ is connected. Let $\phi$ be the filtration by supports of $\Spec(R)$ associated to  $(\mathcal{U},\mathcal{U}^\perp [1])$ and put  $Z:=\underset{i\in I}{\bigcap} \phi(i)$. Let $\mathbf{p}\in\text{Spec}(R)\setminus Z$ and  $i\in\mathbb{Z}$ be given, and suppose that $\mathbf{p}\in\phi (i)$. Identifying $\text{Spec}(R_\mathbf{p})=\{\mathbf{q}\in\text{Spec}(R)\text{: }\mathbf{q}\subseteq\mathbf{p}\}$, we claim that this set is contained in $\phi (i)$, so that $\phi (i)\setminus Z$ is stable under generalization (and specialization) within $\Spec (R)\setminus Z$. Without loss of generality, we assume that $i$ is maximal with the property that $\mathbf{p}\in\phi (i)$. By considering the associated sp-filtration $\phi_\mathbf{p}$ on $\text{Spec}(R_\mathbf{p})$, we then have that $\phi_\mathbf{p}(i)\neq\emptyset =\phi_\mathbf{p}(i+1)$. 
 From Corollary \ref{cor.modular heart via localization} and Proposition \ref{prop.modular heart connected case} we get that $\phi_\mathbf{p}(i)=\Spec (R_\mathbf{p})$, which settles our claim. 
 Let now $\text{MinSpec}(R)$ be the set of minimal prime ideals of $R$ and put  $\mathcal{M}_i=\text{MinSpec (R)}\cap\phi (i)$, for each $i\in\mathbb{Z}$. We  then have  $\phi (i)=\bar{\mathcal{M}}_i\cup Z$, where the upper bar denotes the Zariski closure in $\Spec (R)$. Bearing in mind that $\text{MinSpec (R)}$ is a finite set, we get integers $m_1<m_2<\cdots<m_t$ satisfying the following properties:

\begin{enumerate}
\item[i)] $\phi (i)\supsetneq\phi (i+1)$ if, and only if, $i\in\{m_1,\dots,m_t\}$. Moreover $\phi (i)=Z$, for all $i>m_t$.
\item[ii)] If we put $\tilde{V}_0=\Spec (R)\setminus\phi (m_1)$, $\tilde{V}_t=\phi (m_t)\setminus Z$ and $\tilde{V}_k=\phi (m_k)\setminus\phi (m_{k+1})=\phi (m_k)\setminus\phi (m_k+1)$, for $k=1,\dots,t-1$, then each $\tilde{V}_k$ is stable under specialization and generalization in $\Spec (R)\setminus Z$ and we have $\Spec (R)\setminus Z=\underset{0\leq k\leq t}{\bigcupdot}\tilde{V}_k$.
\end{enumerate}

By Lemma \ref{lem. q(U) is a t-struc.}, if $q:\mathcal{D}(R)\longrightarrow\mathcal{D}(R\Mod/\mathcal{T}_Z)$ is the canonical functor, then $(q(\mathcal{U}),q(\mathcal{U}^\perp )[1])$ is a t-structure in $\mathcal{D}(R\Mod/\mathcal{T}_Z)$ whose heart $\mathcal{H}_{q(\phi)}$ is equivalent to $\mathcal{H}=\Ht$. On the other hand, an iterative use of Lemma \ref{lem. product of quotient category} says that if $V_k=\tilde{V}_k\cup Z$,  for each $k=0,1,\dots,t$, then we have an equivalence of categories

$$\frac{R\Mod}{\mathcal{T}_Z}\stackrel{\cong}{\longleftrightarrow} \frac{\mathcal{T}_{V_0}}{\mathcal{T}_Z}\times \frac{\mathcal{T}_{V_1}}{\mathcal{T}_Z}\times \cdots\times \frac{\mathcal{T}_{V_t}}{\mathcal{T}_Z}.$$

If now $R_Z$ is the ring of fractions of $R$ with respect to $Z$, then, when viewed as an object of $R\Mod/\mathcal{T}_Z$, it decomposes in a direct sum $R_Z=Y_0\oplus Y_1\oplus \cdots \oplus Y_t$, where $Y_k\in\tilde{\T_{V_k}}:= \frac{\mathcal{T}_{V_k}}{\mathcal{T}_Z}$ for each $k=0,1,\dots,t$. This decomposition corresponds to a decomposition of the identity of $\text{End}_{R\Mod/\mathcal{T}_Z}(R_Z)$ as a sum of orthogonal idempotent endomorphisms. But we have a ring isomorphism $R_Z \iso \text{End}_{R\Mod/\mathcal{T}_Z}(R_Z)$ since the section functor $R\Mod/\mathcal{T}_Z\longrightarrow R_Z\Mod$ is fully faithful (see \cite[Chapter IX, Section 1]{S}). We then get idempotents $e_0,e_1,\dots,e_t\in R_Z$, which are central since $R_Z$ is commutative, such that $1=e_0+e_1+\dots+e_t$. Note that all the $e_k$ are nonzero except perhaps $e_0$, which is zero exactly when $\phi (m_1)=\Spec(R)$. 

Using the equivalence of categories $R_Z\Mod/\mu_*^{-1}(\mathcal{T}_Z)\stackrel{\cong}{\longleftrightarrow}R\Mod/\mathcal{T}_Z$ given by the restriction of scalars $\mu_*:R_Z\Mod\longrightarrow R\Mod$, we can now rewrite the above decomposition of Grothendieck categories as

$$\frac{R_Z\Mod}{\mu_{*}^{-1}(\mathcal{T}_Z)}\stackrel{\cong}{\longleftrightarrow}\frac{R_Ze_0\Mod}{\mu_{*}^{-1}(\mathcal T_Z)\cap R_Ze_0\Mod}\times\frac{R_Ze_1\Mod}{\mu_{*}^{-1}(\mathcal T_Z)\cap R_Ze_1\Mod}\times \cdots \times\frac{R_Ze_t\Mod}{\mu_{*}^{-1}(\mathcal T_Z)\cap R_Ze_t\Mod}.$$
That is, each category $\frac{\mathcal{T}_{V_k}}{\mathcal{T}_Z}$ is identified with the full subcategory of $Z$-closed $R$-modules that, when viewed as $R_Z$-modules, belong to $R_Ze_k\Mod$ (equivalently, are annihilated  $1-e_k$). 

The last decomposition of Grothendieck categories passes to the corresponding derived categories. Moreover, by Lemma \ref{lem. q(U) is a t-struc.}, we know that $X\in\mathcal D(R)$ is in $\mathcal{U}$ if, and only if, $q(X)\in q(\mathcal{U})$. We claim that  $q(\mathcal{U})$ is the direct sum of aisles $0\oplus \underset{1\leq k\leq t}{\oplus}\mathcal{D}^{\leq m_k}(\frac{R_Ze_k\Mod}{\mu_{*}^{-1}(\mathcal T_Z)\cap R_Ze_k\Mod})$, with respect to last decomposition. In order to prove this, in the sequel, we will keep $H^i(?)$ to denote the $i$-th homology $R$-module and will denote by $H_{\mathcal{G}_Z}^i(?)$ the corresponding homology object in the category $\mathcal{G}_Z\cong\frac{R\Mod}{\mathcal{T}_Z}$. 
Let $Y\in\mathcal D(\mathcal{G}_Z)$, which we view as a homotopically injective object of the homotopy category  $\mathcal{K}(\mathcal G)$ of $\mathcal G $ (i.e. we assume that $\Hom_{\mathcal{K}(\mathcal{G})}(?,Y)$ vanishes on acyclic complexes), so that $\mathbf{R}j(Y)=Y$ since $j:\mathcal{G}_Z\longrightarrow R\Mod$ is the inclusion functor. That is,  $\mathbf{R}j(Y)$ is the same complex $Y$, viewed as a complex of $R$-modules.  By Lemma \ref{lem. q(U) is a t-struc.}, we know that $Y\in q(\mathcal{U})$ if, and only if,  $\mathbf{R}j(Y)=Y\in\mathcal{U}$ when viewed as a complex of $R$-modules. Then $Y\in q(\mathcal{U})$ if, and only if, $\Supp (H^i(Y))\subseteq\phi (i)=\phi (m_s)=Z\cup\tilde{V}_s\cup\cdots \cup\tilde{V}_t\cup\tilde{V}_{t+1}$, whenever $m_{s-1}<i\leq m_s$, with the convention that $m_0=-\infty$ and $m_{t+1}=+\infty$ and that $\tilde{V}_{t+1}=\emptyset$. When applying the functor $q:R\Mod\longrightarrow\mathcal{G}_Z$, this is equivalent to saying that $H_{\mathcal{G}_Z}^i(Y)$ is in $\frac{\mathcal{T}_{V_s}}{\mathcal{T}_Z}\oplus\frac{\mathcal{T}_{V_{s+1}}}{\mathcal{T}_Z}\oplus \cdots\oplus\frac{\mathcal{T}_{V_t}}{\mathcal{T}_Z}$ whenever $m_{s-1}<i\leq m_s$. Using now the equivalences of categories $\frac{\mathcal{T}_{V_k}}{\mathcal{T}_Z}\cong\frac{R_Ze_k\Mod}{\mu_{*}^{-1}(\mathcal T_Z)\cap R_Ze_k\Mod}$, with $k=0,1,\dots,t$, we conclude that $Y$ is in $q(\mathcal{U})$ if, and only if, we have $e_0Y=0$ and $H_{\mathcal{G}_Z}^i(e_kU)=0$, for all $k=1,\cdots,t$ and all $i>m_k$. This is exactly saying that $e_0Y=0$ and $e_kY\in\mathcal{D}^{\leq m_k}(\frac{R_Ze_k\Mod}{\mu_{*}^{-1}(\mathcal T_Z)\cap R_Ze_k\Mod})$, for each $k=1,\dots,t$. Therefore we have the desired equality of aisles $q(\mathcal{U})=0\oplus \underset{1\leq k\leq t}{\oplus}\mathcal{D}^{\leq m_k}(\frac{R_Ze_k\Mod}{\mu_{*}^{-1}(\mathcal T_Z)\cap R_Ze_k\Mod})$.

Bearing in mind that the heart of $(\mathcal{D}^{\leq m_k}(\frac{R_Ze_k\Mod}{\mu_{*}^{-1}(\mathcal T_Z)\cap R_Ze_k\Mod}),\mathcal{D}^{\geq m_k}(\frac{R_Ze_k\Mod}{\mu_{*}^{-1}(\mathcal T_Z)\cap R_Ze_k\Mod}))$ is equivalent to $\frac{R_Ze_k\Mod}{\mu_{*}^{-1}(\mathcal T_Z)\cap R_Ze_k\Mod}$, the hypothesis together with Lemmas \ref{lem. q(U) is a t-struc.}  and \ref{lem. product of aisles}  give that $\frac{R_Ze_k\Mod}{\mu_{*}^{-1}(\mathcal T_Z)\cap R_Ze_k\Mod}$ is a module category, for each $k=1,\dots,t$. Then, by Proposition \ref{prop.perfect localizations} and the fact that $R_Ze_k$ is $Z$-closed, we get that $\mu_{*}^{-1}(\mathcal T_Z)\cap R_Ze_k\Mod=0$, for $k=1,\dots,t$. This gives condition 2.a in the statement of the theorem. On the other hand, using Lemma \ref{lem. q(U) is a t-struc.} again, we have  the following chain of implications for $U\in\mathcal D(R)$:

$$U\in\mathcal{U}\Longleftrightarrow q'(R_Z\otimes_R^\mathbf{L}U)\in q(\mathcal{U})\cong 0\times\mathcal D^{\leq m_1}(R_Ze_1)\times ...\times D^{\leq m_t}(R_Ze_t),$$ 
when considering the decomposition $\mathcal{D}(\frac{R\Mod}{\mathcal T_Z})\cong\mathcal{D}(\frac{R_Ze_0\Mod}{\mu_{*}^{-1}(\mathcal T_Z)})\times\mathcal D(R_Ze_1)\times \dots \times\mathcal D(R_Ze_t)$ seen above. Then $U$ is in $\mathcal{U}$ if and only if $e_k(R_Z\otimes_R^{\mathbf{L}}U)\in\mathcal D^{\leq m_k}(R_Ze_k)$, for $k=1,\dots,t$, and $q'(e_0(R_Z\otimes_R^{\mathbf{L}}U))=0$. But this last equality holds if, and only if, all the homology $R_Z$-modules of $e_0(R_Z\otimes_R^\mathbf{L}U)$ are in $\mu_{*}^{-1}(\mathcal T_Z)$. That is, if and only if, their support is in $Z$ when viewed as $R$-modules. 
\end{proof}

The following example shows that, even when $R$ is connected, the heart of a compactly generated t-structure can be a module category which strictly decomposes as a product of smaller (module) categories.

\begin{exem}
Let $R$ be reduced (i.e. with no nonzero nilpotent elements), let $\{\mathbf{p}_1,\dots,\mathbf{p}_r\}$ be a subset of $\text{MinSpec}(R)$ and consider the sp-filtration $\phi$ of $\Spec(R)$ given as follows:

\begin{enumerate}
 \item $\phi (i)=\Spec (R)\setminus\text{MinSpec}(R)$, for all $i\geq 0$;
\item $\phi (i)=(\Spec (R)\setminus\text{MinSpec}(R))\cup\{\mathbf{p}_1,\dots,\mathbf{p}_{-i}\}$, for $-r< i< 0$
\item $\phi (i)=(\Spec (R)\setminus\text{MinSpec}(R))\cup\{\mathbf{p}_1,\dots,\mathbf{p}_{r}\}$, for all $i\leq -r$.
\end{enumerate}
If $(\mathcal{U}_\phi ,\mathcal{U}_\phi^\perp [1])$ is the associated compactly generated t-structure of $\mathcal{D}(R)$, then the heart $\Ht$ is equivalent to $k(\mathbf{p}_1)\Mod\times \cdots \times k(\mathbf{p}_r)\Mod\cong (k(\mathbf{p}_1)\times \cdots \times k(\mathbf{p}_r))\Mod$, where $k(\mathbf{p})$ denotes the residue field at $\mathbf{p}$, for each $\mathbf{p}\in\Spec (R)$. 
\end{exem}
\begin{proof}
If $S$ denotes the set of nonzero divisors of $R$ and $Z=\Spec (R)\setminus\text{MinSpec}(R)$, then $R_Z\cong S^{-1}R$ and we have a ring isomorphism $S^{-1}R\iso \prod_{\mathbf{p}\in\text{MinSpec}(R)}k(\mathbf{p})$ (see \cite[Proposition III.4.23]{Ku}). Moreover $S^{-1}R$ is a perfect localization of $R$ (see \cite[Proposition XI.6.4]{S}). It follows that $R\Mod/\mathcal{T}_Z$ is equivalent to $S^{-1}R\Mod$ and the canonical functor $q:R\Mod\longrightarrow R\Mod/\mathcal{T}_Z$ gets identified with the localization functor $S^{-1}(?)\cong S^{-1}R\otimes_R?:R\Mod\longrightarrow S^{-1}R\Mod$. In this case the ring homomorphism  $\mu :R\longrightarrow S^{-1}R$  is the canonical one, and we have $\mu_*^{-1}(\mathcal T_Z)=0$, so that condition 2.a of Theorem \ref{teor.modular main theorem1} is automatic.

Denote now by $e_j$ the idempotent of $S^{-1}R$ corresponding to the summand $k(\mathbf{p}_j)$ of $S^{-1}R$, for each $j=1,\dots,r$. Given a complex $U\in\mathcal D(R)$, we have that $U\in\mathcal{U}_\phi$ if, and only if, $S^{-1}U=S^{-1}R\otimes_RU=S^{-1}R\otimes_R^\mathbf{L}U\in\mathcal{U}$ (see Lemma \ref{lem. q(U) is a t-struc.}). This happens if, and only if,  $e_0S^{-1}U=0$, where $e_0=1-\underset{1\leq j\leq r}{\sum}e_j$, and $e_jS^{-1}R\leq D^{\leq -j}(S^{-1}Re_j)=D^{\leq -j}(k(\mathbf{p}_j))$, for each $j=1,\dots,r$. Then condition 2.b of the theorem also holds. 
\end{proof}

Some direct  consequences can be derived now from last theorem and its proof. 

\begin{cor} \label{cor.modular case for left non-degenerate}
Let $R$ be connected and let $(\mathcal{U},\mathcal{U}^\perp [1])$ be a compactly generated left nondegenerate t-structure in $\mathcal{D}(R)$ such that $\mathcal{U}\neq 0$. The heart of this t-structure is a module category if, and only if, $(\mathcal{U},\mathcal{U}^\perp [1])=(\mathcal{D}^{\leq m}(R),\mathcal{D}^{\geq m}(R))$, for some integer $m$. 
\end{cor}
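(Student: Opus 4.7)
The ``if'' direction is immediate, since the heart of $(\mathcal{D}^{\leq m}(R),\mathcal{D}^{\geq m}(R))$ is equivalent to $R\Mod$.

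For the converse, the plan is to invoke Theorem \ref{teor.modular main theorem1}. First I would verify its hypothesis $\mathcal{U}\neq\mathcal{U}[-1]$: if it failed, then $\mathcal{U}[k]=\mathcal{U}$ for every $k\in\mathbb{Z}$, giving $\bigcap_{k\in\mathbb{Z}}\mathcal{U}[k]=\mathcal{U}\neq 0$, which contradicts left nondegeneracy. With the theorem in hand, I obtain an sp-subset $Z\subsetneq\Spec(R)$, nonzero orthogonal idempotents $e_{1},\dots,e_{t}$ of $R_{Z}$ with $e_{0}=1-\sum_{k=1}^{t}e_{k}$, and integers $m_{1}<\dots<m_{t}$ satisfying the description in condition 2.b.

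The critical step is to show $Z=\emptyset$. Let $\phi$ be the sp-filtration associated to $(\mathcal{U},\mathcal{U}^{\perp}[1])$ via Proposition \ref{prop.sp-filtrations versus t-structures}. A complex $U$ lies in $\mathcal{U}[n]$ exactly when $\Supp(H^{j}(U))\subseteq\phi(j+n)$ for every $j$, and therefore $U\in\bigcap_{n\in\mathbb{Z}}\mathcal{U}[n]$ iff $\Supp(H^{j}(U))\subseteq\bigcap_{i\in\mathbb{Z}}\phi(i)=Z$ for every $j$. If $Z$ contained a prime $\mathbf{p}$, then $(R/\mathbf{p})[0]$ would be a nonzero complex witnessing that $\bigcap_{n\in\mathbb{Z}}\mathcal{U}[n]\neq 0$, against the left nondegeneracy hypothesis. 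Hence $Z=\emptyset$, which gives $\mathcal{T}_{Z}=0$ and $R_{Z}=R$, so the idempotents $e_{0},e_{1},\dots,e_{t}$ actually live in $R$ itself.

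To finish, the connectedness of $R$ forces the only idempotents of $R$ to be $0$ and $1$; since $e_{1},\dots,e_{t}$ are nonzero and mutually orthogonal, we must have $t\leq 1$. The case $t=0$ is excluded because condition 2.b would then read ``$\Supp(H^{j}(U))\subseteq\emptyset$ for all $j$'', forcing $\mathcal{U}=0$ against the assumption. Thus $t=1$, and connectedness gives $e_{1}=1$ and $e_{0}=0$; condition 2.b then simply becomes $U\in\mathcal{D}^{\leq m_{1}}(R)$, yielding $\mathcal{U}=\mathcal{D}^{\leq m}(R)$ with $m:=m_{1}$ and automatically $\mathcal{U}^{\perp}[1]=\mathcal{D}^{\geq m}(R)$. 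I expect the main obstacle to be the translation of left nondegeneracy into the equality $Z=\emptyset$; after that, the collapse of the idempotent decomposition via connectedness is essentially bookkeeping.
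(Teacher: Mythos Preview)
Your argument is correct and mirrors the paper's proof. The one step you assert without justification, namely $\bigcap_{i\in\mathbb{Z}}\phi(i)=Z$, is handled in the paper exactly as you would need to handle it: by appealing to the \emph{proof} (not merely the statement) of Theorem \ref{teor.modular main theorem1}, where $Z$ is explicitly constructed as $\bigcap_{i}\phi(i)$; once that identification is made, the collapse via connectedness of $R_Z=R$ to $t=1$, $e_1=1$, $m=m_1$ is precisely what the paper does.
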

\begin{proof}
If $\phi$ is the associated sp-filtration, then the left nondegeneracy of $(\mathcal{U},\mathcal{U}^\perp [1])$ translates into the fact that $\bigcap_{i\in\mathbb{Z}}\phi(i)=\emptyset$. We now apply Theorem \ref{teor.modular main theorem1}. The proof of its implication $1)\Longrightarrow 2)$ shows that the sp-subset $Z$ given by its assertion 2 is empty in this case. Since $R_Z=R$ is connected, we have a unique nonzero idempotent $e_1=1$ and a unique integer $m=m_1$ in that assertion 2.  The result is then a straightforward consequence of the mentioned theorem.  
\end{proof}

\begin{cor} \label{cor.modular for prime nilradical}
Let $R$ be connected and suppose that its nilradical is a prime ideal. Let  $(\mathcal{U},\mathcal{U}^\perp [1])$ be a compactly generated t-structure in $\mathcal{D}(R)$ such that $\mathcal{U}\neq\mathcal{U}[-1]$. The heart $\mathcal{H}$ of this t-structure is a module category if, and only if there are a possibly empty perfect sp-subset $Z\subseteq\Spec (R)$ and an integer $m$ such that the following property holds:

($\dagger$) A complex $U\in\mathcal{D}(R)$ is in $\mathcal{U}$ if, and only if, $R_Z\otimes_R U$ is in $\mathcal D^{\leq m}(R_Z)$ (equivalently, if, and only if, $\Supp (H^j(U))\subseteq Z$ for all $j>m$). 

 In this case $\mathcal{H}$ is equivalent to $R_Z\Mod$.
\end{cor}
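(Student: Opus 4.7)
The plan is to deduce the corollary from Theorem \ref{teor.modular main theorem1} by exploiting the fact that a commutative Noetherian ring whose nilradical is a prime ideal has a unique minimal prime $\mathbf{p}_0$, with $\mathbf{p}_0\subseteq\mathbf{q}$ for every prime $\mathbf{q}\in\Spec(R)$.

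For the implication ``$\mathcal{H}$ is a module category $\Rightarrow$ existence of $(Z,m)$ satisfying $(\dagger)$'', I would first apply Theorem \ref{teor.modular main theorem1} to obtain a stable-under-specialization subset $Z\subsetneq\Spec(R)$, nonzero orthogonal idempotents $e_1,\dots,e_t\in R_Z$, and integers $m_1<\dots<m_t$ fulfilling conditions 2.(a) and 2.(b) of that theorem. The hypothesis $\mathcal{U}\neq\mathcal{U}[-1]$ guarantees that the associated sp-filtration $\phi$ has at least one strict drop, so $t\geq 1$; and since $Z\subsetneq\Spec(R)$ is stable under specialization, $\mathbf{p}_0\in Z$ would force $Z=\Spec(R)$, so we must have $\mathbf{p}_0\notin Z$.

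The main obstacle is to show that $t=1$ and $e_0:=1-(e_1+\dots+e_t)=0$. For this, I would revisit the internal construction in the proof of Theorem \ref{teor.modular main theorem1}: there $\Spec(R)\setminus Z$ is written as a disjoint union $\bigcupdot_{0\leq k\leq t}\tilde{V}_k$ with each piece stable under both specialization and generalization within $\Spec(R)\setminus Z$, with the pieces $\tilde{V}_1,\dots,\tilde{V}_t$ all nonempty, and with $e_k$ ($k\geq 1$) corresponding to $\tilde{V}_k$ while $e_0$ corresponds to $\tilde{V}_0=\Spec(R)\setminus\phi(m_1)$. Since $\mathbf{p}_0\in\Spec(R)\setminus Z$ is a generalization, within $\Spec(R)\setminus Z$, of every prime in that complement, stability under generalization forces each such prime to lie in the unique piece $\tilde{V}_{k_0}$ containing $\mathbf{p}_0$. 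Combined with pairwise disjointness, the nonemptiness of $\tilde{V}_1,\dots,\tilde{V}_t$, and $t\geq 1$, this forces $t=1$, $k_0=1$, and $\tilde{V}_0=\emptyset$, whence $e_0=0$ and $e_1=1$.

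Writing $m:=m_1$, condition 2.(a) of the main theorem specialises to $\mu_*^{-1}(\mathcal{T}_Z)=0$, which by Remark \ref{rem.perfect localization} is precisely the perfectness of $Z$; in particular $A=R_Z(1-e_0)=R_Z$. Condition 2.(b) then reduces to ``$U\in\mathcal{U}$ iff $R_Z\otimes_R^{\mathbf{L}}U\in\mathcal{D}^{\leq m}(R_Z)$'', and the flatness of $R_Z$ together with the standard identity $R_Z\otimes_RM=0\Leftrightarrow\Supp(M)\subseteq Z$ (valid for perfect localizations) rewrites this as $\Supp(H^j(U))\subseteq Z$ for all $j>m$, establishing $(\dagger)$. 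The converse is immediate from the implication $2)\Rightarrow 1)$ of Theorem \ref{teor.modular main theorem1} applied with $t=1$, $e_1=1$, $m_1=m$: condition 2.(a) is automatic from perfectness of $Z$, condition 2.(b) is exactly $(\dagger)$, and the last sentence of that theorem yields $\mathcal{H}\cong R_Z\Mod$.
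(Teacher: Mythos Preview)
Your proof is correct and follows essentially the same approach as the paper: both invoke the decomposition $\Spec(R)\setminus Z=\bigcupdot_{0\leq k\leq t}\tilde{V}_k$ from the proof of Theorem~\ref{teor.modular main theorem1} and use the unique minimal prime to force $t=1$ and $\tilde{V}_0=\emptyset$. The only minor difference is in how you establish that $Z$ is perfect: you read it off directly from condition~2.(a) of the theorem (with $e_1=1$) via Remark~\ref{rem.perfect localization}, whereas the paper first identifies the filtration explicitly as $\phi(i)=\Spec(R)$ for $i\leq m$ and $\phi(i)=Z$ for $i>m$, then applies Proposition~\ref{prop.heart equivalent to quotient} and Proposition~\ref{prop.perfect localizations}; your route is slightly more direct but the content is the same.
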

\begin{proof}
The 'if part' is a direct consequence of Theorem \ref{teor.modular main theorem1}. For the 'only if' part, note that the proof of  implication $1)\Longrightarrow 2)$ in the mentioned theorem gives an sp-subset $Z\subsetneq\Spec (R)$ and  decomposition $\Spec (R)\setminus Z=\underset{0\leq k\leq t}{\bigcupdot}\tilde{V}_k$, where the $\tilde{V}_k$ are stable under specialization and generalization in $\Spec (R)\setminus Z$, and all $\tilde{V}_k$ are nonempty except perhaps $\tilde{V}_0$, which is empty exactly when $\phi (i)=\Spec (R)$ for some
 $i\in\mathbb{Z}$. The existence of a unique minimal prime ideal of $R$ implies that $t=1$ and, with the terminology of Theorem \ref{teor.modular main theorem1}, that there is a unique integer $m=m_1$ in its assertion 2. Moreover, we have $\tilde{V}_0=\emptyset$.  Then the associated filtration by supports satisfy that $\phi (i)=\Spec (R)$, for $i\leq m$, and $\phi (i)=Z$, for $i>m$. By Proposition \ref{prop.heart equivalent to quotient}, we get that $R\Mod/\mathcal T_Z$ is a module category and, by Proposition \ref{prop.perfect localizations},  we get that $Z$ is a perfect sp-subset. But then  we have that  $R_Z\otimes_R^\mathbf{L}U=R_Z\otimes_RU$ since $R_Z$ is a flat $R$-module. It follows that a complex $U\in\mathcal D(R)$ is in $\mathcal{U}=\mathcal{U}_\phi$ (i.e. satisfies that $\Supp (H^i(U))\subseteq Z$, for all $i>m$) if, and only if, $R_Z\otimes_RU\in\mathcal{D}^{\leq m}(R_Z)$. 
\end{proof}

\begin{proof}[Proof of Theorem B]:  Assertion 2.a of this theorem is a geometric translation of Corollary \ref{cor.modular case for left non-degenerate}. On the other hand, given an affine noetherian scheme $\mathbb{X}=\Spec(R)$, the assignment $Z\rightsquigarrow\mathbb{X}\setminus Z$ defines a bijection between perfect sp-subsets of $\mathbb{X}$ and affine subschemes $\mathbb{Y}\subseteq\mathbb{X}$
(see \cite[Teorema 1.1.20]{J}, whose proof is deduced from \cite[Proposition 2.5]{L} and \cite[Proposition 6.13]{V}). Now assertions 2.b and 1 in Theorem B are geometric translations of Corollary \ref{cor.modular for prime nilradical} and the final statement of Theorem \ref{teor.modular main theorem1} 
\end{proof}

\end{document}